\newcommand{\mc}[1]{\mathcal{#1}}
\newcommand{\mb}[1]{\mathbb{#1}}
\newcommand{\mr}[1]{\mathrm{#1}}
\newcommand{\mbf}[1]{\mathbf{#1}}
\newcommand{\mit}[1]{\mathit{#1}}
\newcommand{\bra}[1]{\langle #1 \rangle}
\newcommand{\br}[1]{\overline{#1}}
\newcommand{\td}[1]{\widetilde{#1}}
\newcommand{\ZZ}{\mathbb{Z}}
\newcommand{\RR}{\mathbb{R}}
\newcommand{\CC}{\mathbb{C}}
\newcommand{\QQ}{\mathbb{Q}}
\newcommand{\FF}{\mathbb{F}}
\newcommand{\GG}{\mathbb{G}}
\newcommand{\MS}{\mathbb{S}}
\newcommand{\AF}{\mathbb{A}}
\newcommand{\TAF}{\mathrm{TAF}}
\newcommand{\TMF}{\mathrm{TMF}}
\newcommand{\tmf}{\mathrm{tmf}}
\newcommand{\Sh}{\mathrm{Sh}}
\newcommand{\disc}{\mathrm{disc}}
\theoremstyle{definition}
 \newtheorem{thm}[equation]{Theorem}
 \newtheorem{cor}[equation]{Corollary}
 \newtheorem{lem}[equation]{Lemma}
 \newtheorem{prop}[equation]{Proposition}
 \newtheorem{ex}[equation]{Example}
 \newtheorem{rmk}[equation]{Remark}
\newtheorem*{thm*}{Theorem}
\newtheorem*{cor*}{Corollary}
\newtheorem*{lem*}{Lemma}
\newtheorem*{prop*}{Proposition}
\newtheorem*{defn*}{Definition}
\newtheorem*{ex*}{Example}
\newtheorem*{exs*}{Examples}
\newtheorem*{rmk*}{Remark}
\newtheorem*{claim*}{Claim}
\numberwithin{equation}{section}
\numberwithin{figure}{section}
\DeclareMathOperator{\Hom}{Hom}
\DeclareMathOperator{\Br}{Br}
\DeclareMathOperator{\Gal}{Gal}
\DeclareMathOperator{\Art}{Art}
\DeclareMathOperator{\Inv}{Inv}
\DeclareMathOperator{\Res}{Res}
\DeclareMathOperator{\End}{End}
\DeclareMathOperator{\Aut}{Aut}
\DeclareMathOperator{\Spec}{Spec}
\DeclareMathOperator{\Spf}{Spf}
\DeclareMathOperator{\Lie}{Lie}
\DeclareMathOperator{\Ind}{Ind}
\DeclareMathOperator{\Tr}{Tr}
\DeclareMathOperator*{\colim}{colim}
\title{Higher real $K$-theories and topological automorphic
forms}
\author[M.~Behrens]{M.~Behrens$\sp 1$}
\address{
Dept. of Mathematics \\
M.I.T. \\
Cambridge, MA  02139, U.S.A.
}
\author[M.J.~Hopkins]{M.J.~Hopkins$\sp 2$}
\address{
Dept. of Mathematics \\
Harvard University \\
Cambridge, MA  02138, U.S.A.
}
\subjclass[2000]{Primary 55N20; Secondary 11G18, 16K50}
\date{\today}
\begin{document}

\begin{abstract}
Given a maximal finite subgroup $G$ of the $n$th 
Morava stabilizer group at a prime $p$, 
we address the question: is the associated higher real
$K$-theory $EO_n$ a summand of the $K(n)$-localization of a
$\TAF$-spectrum associated to a unitary similitude group of type $U(1,n-1)$?  
We answer this question in the affirmative for $p \in \{2, 3, 5, 7\}$ and
$n = (p-1)p^{r-1}$ for a maximal finite subgroup containing an element of
order $p^r$.  We answer the question in the negative for all other odd
primary cases.  In all odd primary cases, we to give an explicit
presentation of a global division algebra with involution in which the
group $G$ embeds unitarily.
\end{abstract}

\maketitle

\footnotetext[1]{The first author was supported by the NSF, the Sloan
Foundation, and DARPA.}
\footnotetext[2]{The second author was supported by the NSF and DARPA.}

\tableofcontents

\section{Introduction}

For a prime $p$,
let $E_n$ denote the Morava $E$-theory spectrum 
associated with the Honda formal
group $H_n$ of height $n$ over $\bar{\FF}_p$, so that
$$ \pi_* E_n = W(\bar{\FF}_p)[[u_1, \ldots, u_{n-1} ]]. $$
The second author, with P.~Goerss and H.~Miller,
showed that $E_n$ is an $E_\infty$ ring spectrum, and that 
the $n$th Morava stabilizer group
$$ \MS_n = \Aut(H_n) $$
acts on $E_n$ by $E_\infty$ ring maps \cite{Rezk},
\cite{GoerssHopkins}.

One of the original motivations for producing this action was that, given a
maximal finite subgroup $G$ of $\MS_n$, the spectrum
$$ EO_n := E_n^{hG} $$
would more effectively detect $v_n$-periodic homotopy groups of
spheres, generalizing the phenomenon that 
the spectrum $KO$ detects the $2$-primary image of $J$ more effectively
than the spectrum $KU$.
In fact, for $p = 2$, there is an equivalence 
\begin{equation}\label{eq:EO1}
KO_{K(1)} \simeq EO_1^{h\Gal(\bar{\FF}_p/\FF_p)}. 
\end{equation}
It is for this reason that the spectra $EO_n$ are regarded as ``higher real
$K$-theories.''  

For the primes $2$ and $3$, there are equivalences
\begin{equation}\label{eq:EO2}
\TMF_{K(2)} \simeq EO_2^{h\Gal(\bar{\FF}_p/\FF_p)} 
\end{equation}
where $\TMF$ denotes the spectrum of topological modular forms.
Using a theorem of J.~Lurie, the first author and T.~Lawson constructed
$p$-complete spectra $\TAF_{GU}(K)$ 
of topological automorphic forms associated to unitary similitude
groups over $\QQ$ of signature $(1,n-1)$ and compact open subgroups $K
\subset GU(\AF^{p,\infty})$ \cite{taf}.  
The spectrum $\TAF_{GU}(K)$ arises from a Shimura stack $\Sh(K)$, in the
same manner that the spectrum $\TMF$ arises from the moduli stack of
elliptic curves.
The spectra $\TAF_{GU}(K)$ detect
$E(n)$-local phenomena in the same sense that the spectrum $\TMF^\wedge_p$
detects $E(2)$-local phenomena.  This is made precise as follows: there is
an equivalence
\begin{equation*}
\TAF_{GU}(K)_{K(n)} \simeq 
\left(
\prod_{x \in \Sh^{[n]}(K)(\bar{\FF}_p)}
E_n^{h\Aut(x)}
\right)^{h\Gal(\bar{\FF}_p/\FF_p)}
\end{equation*}
where $\Sh^{[n]}(K)$ is the (non-empty) 
finite $0$-dimensional substack of $\Sh(K)$
where the associated formal group has height $n$, and the automorphism
groups $\Aut(x)$ are finite subgroups of $\MS_n$ \cite[Cor.~14.5.6]{taf}.  
Given equivalences (\ref{eq:EO1}) and
(\ref{eq:EO2}) it is natural to ask:
\begin{quote}
For a given prime $p$ and chromatic level $n$, does there exist a pair
$(GU, K)$ so that there exists an $x \in \Sh^{[n]}(K)$ for which $\Aut(x)$
is a maximal finite subgroup of $\MS_n$?
\end{quote}
The purpose of this paper is to provide answers to this question.

For large $n$, the notation $EO_n$ is ambiguous,
because there exist multiple non-isomorphic choices of maximal finite
subgroups $G$.  Hewett \cite{Hewett} determined precisely the collection of
maximal finite subgroups of $\MS_n$.  In particular, he showed that if $r$
is the largest exponent so that $G$ contains an element of order $p^r$, then
the isomorphism class of $G$ is determined by the pair $(n,r)$.  If $p$ is odd and $G$ is non-abelian, then as a subgroup of the group of units of the associated division algebra, it is unique up to conjugation \cite[Prop.~6.11]{Hewett}.  In this
paper, we will denote such a subgroup $G_r$.  We summarize our results in
the following theorem.

\begin{thm}\label{thm:mainthm}
Suppose that $p$ is odd and $n = (p-1)p^{\alpha-1}m$ for a positive integer $m$.  Then there exists a
pair $(GU,K)$ whose associated Shimura stack has a height $n$ mod $p$
point with automorphism group $G_\alpha$ if and only if $p \in \{ 3, 5,
7\}$ and $m = 1$.  If $p = 2$, and $n = 2^{r-1}$ for $r >2$, we can also
realize $G_{r-1}$ as an automorphism group.
\end{thm}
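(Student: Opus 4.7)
The plan is to translate the topological question into a purely algebraic existence question about global division algebras with involution of the second kind, and then attack that using local--global (Hasse principle) methods combined with Hewett's structural description of $G_\alpha$.

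At a height-$n$ mod $p$ point $x$ of a $U(1,n-1)$-Shimura stack, the automorphism group $\Aut(x)$ is the unitary finite subgroup of the units of an order in a global simple algebra $(D,*)$ over an imaginary quadratic field $F$ with $p$ split in $F$, $\dim_F D = n^2$, and with prescribed local behavior: at the two places of $F$ above $p$, $D$ becomes the $n$th Morava stabilizer division algebra (invariant $1/n$) and its opposite. The question therefore becomes whether, for each pair $(p, n=(p-1)p^{\alpha-1}m)$, there exists such a triple $(F,D,*)$ admitting a unitary embedding of $G_\alpha$. The first step is to read off necessary conditions from the embedding: $D$ must contain $\QQ(\zeta_{p^\alpha})$ and the centralizer structure inside $D$ must match the structure of $G_\alpha$ forced by Hewett; and the Albert condition $[D] + [D]^c = 0$ in $\Br(F)$, where $c$ is the non-trivial automorphism of $F/\QQ$, together with Hasse reciprocity, rigidifies the local invariants of $D$ at every finite place.

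For the positive cases $p \in \{3,5,7\}$ with $m = 1$, and the analogous positive case for $p = 2$, the plan is to construct $(F,D,*)$ explicitly as a cyclic algebra: choose $F$ so that $p$ splits, take the cyclic algebra over $F$ with maximal subfield $F(\zeta_{p^\alpha})$, and engineer the defining cocycle so the Hasse invariants at the two places above $p$ are $\pm 1/n$ while the remaining invariants are chosen to vanish or to accommodate the order-prime-to-$p$ elements of $G_\alpha$. The involution $*$ of the second kind is then produced via Landherr's theorem, and the unitary embedding of $G_\alpha$ is verified by direct computation inside the cyclic presentation, using that the cyclic Galois group $\Gal(F(\zeta_{p^\alpha})/F)$ matches the outer action of $G_\alpha / C_{p^\alpha}$ precisely when $(p-1)p^{\alpha-1}$ is the full index, i.e.\ $m=1$ and $p \leq 7$.

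The main obstacle, and the real content of the theorem, is the negative direction. For $p \geq 11$ with $m = 1$, or any odd $p$ with $m > 1$, the plan is to compute the Hasse invariants that any hypothetical global $D$ admitting a unitary embedding of $G_\alpha$ would have to carry at each finite place: the invariant at $p$ is forced to $1/n$, while the invariants at the primes $\ell$ dividing $|G_\alpha|$ with $\ell \neq p$ are pinned down (up to a controlled ambiguity) by the local structure of $G_\alpha$ inside the unit group of a division algebra over $\QQ_\ell$, via the classification of maximal cyclic subfields of local division algebras. The hard part will be to show that the sum of these forced local invariants is non-zero modulo $\ZZ$, so that no global $D$ with the prescribed data can exist. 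Organizing this reciprocity obstruction case-by-case using Hewett's explicit presentation of $G_\alpha$, and simultaneously producing the promised explicit global division algebra with involution in each odd-primary case (even when no unitary embedding exists), is where the bulk of the work lies.
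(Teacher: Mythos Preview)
Your translation of the problem into the existence question for $(D,\dag)$ over an imaginary quadratic field, and your sketch of the positive direction via a cyclic-algebra presentation with an explicitly constructed involution, are both on the right track and match the paper's strategy reasonably well.

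The negative direction, however, is where your plan diverges sharply from the paper, and where it runs into trouble. You propose to compute the Hasse invariants that a hypothetical $D$ would be forced to carry at the primes $\ell \ne p$ dividing $\lvert G_\alpha\rvert$, and then exhibit a reciprocity obstruction. But those local invariants are \emph{not} pinned down by the presence of $G_\alpha$: at any finite place $v$ of $F$ not over $p$, one can arrange $D_v$ to be split (indeed, the paper's own global construction has all nontrivial invariants concentrated at the places above $p$), and a split matrix algebra over $F_v$ happily contains any finite group that embeds in $GL_n(F_v)$. So there is no local obstruction at such $\ell$, and hence no reciprocity sum to violate.

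The actual obstruction is far more elementary and has nothing to do with Brauer invariants away from $p$. The group $G_\alpha$ contains a cyclic element of order $p^\alpha(p^m-1)$, so any $D$ containing $G_\alpha$ must contain the field $M = F(\zeta_{p^\alpha(p^m-1)})$ as a commutative subalgebra. Since $[D:F] = n^2$, this forces $[M:F] \le n = (p-1)p^{\alpha-1}m$. But $[M:F]$ is either $(p-1)p^{\alpha-1}\varphi(p^m-1)$ or half of that, so one needs $\varphi(p^m-1) \le 2m$. A short estimate (e.g.\ $\varphi(N) \ge \sqrt{N/2}$) and a small finite check then leave only $p \in \{3,5,7\}$ with $m=1$. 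That is the entire negative argument; you should replace your reciprocity-obstruction plan with this degree count.
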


\begin{rmk}
In the above theorem, 
the case of $p = 2$ and $n = 1$ is handled by $KO$, and 
the case of $p = 2$ and $n = 2$ is the
$\TMF$ case.  However, both of these cases may also be viewed as instances
of $\TAF$: see \cite[Ch.~15]{taf} for the $n=1$ case, and
\cite{BehrensLawson} for the $n = 2$ case.
\end{rmk}

We also prove the following algebraic theorem, which could be regarded as a
kind of global analog of Hewett's results.

\begin{thm}\label{thm:mainthm2}
Suppose that $p$ is odd, and that $n = (p-1)p^{\alpha-1}m$.  Then there
exists a global division algebra $D$ with positive 
involution $\dag$ of the second
kind, so that:
\begin{enumerate}
\item we have $[D:F] = n^2$, where $F$ is the center of $D$, 
\item there is a prime $x$ of the fixed field $F^{\dag=1}$ which splits as
$yy^\dag$ in
$F$, so that $\Inv_{y} D = 1/n$,
\item there is a $\dag$-unitary 
embedding of the maximal finite subgroup $G_\alpha
\subset \MS_n$ in $D^\times$. 
\end{enumerate}
Since $G_\alpha$ is a maximal finite subgroup in the completion $D_y^\times$, 
it is
necessarily a maximal finite subgroup in $D^\times$.
\end{thm}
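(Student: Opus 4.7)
The plan is to globalize Hewett's local embedding. By \cite{Hewett}, $G_\alpha$ is realized inside the unit group of the local division algebra $D_n^{\mr{loc}}/\QQ_p$ of invariant $1/n$ as the group generated by a primitive $p^\alpha$-th root of unity $\zeta$ and an element $\tau$ normalizing $\QQ_p(\zeta)$ by a distinguished Galois automorphism $\zeta \mapsto \zeta^s$, with $\tau^n$ a prescribed unit in $\QQ_p(\zeta)^\times$. I will produce a CM field $F$ together with a central simple $F$-algebra $D$ of dimension $n^2$ whose completion at a chosen split prime reproduces this local picture.

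For the base field, take $F = \QQ(\sqrt{-d})$ to be an imaginary quadratic field in which $p$ splits. Then $F^{\dag=1} = \QQ$, and the unique prime $x = (p)$ of $\QQ$ above $p$ splits in $F$ as $yy^\dag$ with $F_y \cong \QQ_p$; this ensures that any $D/F$ with $D_y \cong D_n^{\mr{loc}}$ automatically receives $G_\alpha$ locally by Hewett. For a maximal subfield, set $L = F \cdot \QQ(\zeta_{p^\alpha}) \cdot M$, where $M/\QQ$ is a totally real number field of degree $m$, linearly disjoint from $F \cdot \QQ(\zeta_{p^\alpha})$, and unramified of residue degree $m$ at $p$. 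Then $L$ is CM, $[L:F] = n$, and $L_y$ coincides with the canonical maximal commutative subfield $\QQ_p(\zeta_{p^\alpha}) \cdot W_m$ of $D_n^{\mr{loc}}$ used in Hewett's description. By choice of $M$, $\Gal(L/F)$ can be taken cyclic (or one passes to a crossed-product construction in general).

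Form $D = (L/F, \sigma, \beta)$ as a cyclic algebra, with $\sigma \in \Gal(L/F)$ chosen so that its restriction to $\QQ(\zeta_{p^\alpha})$ matches the conjugation action $\zeta \mapsto \zeta^s$ of $\tau$ on $\zeta$, and $\beta \in F^\times$ chosen by Albert-Brauer-Hasse-Noether so that $\Inv_y D = 1/n$, $\Inv_{y^\dag} D = -1/n$, and $\Inv_v D = 0$ at all other finite places. Since $\Gal(L/F)$ is abelian, the complex conjugation $c$ on the CM field $L$ commutes with $\sigma$, so the formula $\tau_{\mr{alg}}^\dag := \tau_{\mr{alg}}^{-1}$ (for the cyclic generator $\tau_{\mr{alg}}$ satisfying $\tau_{\mr{alg}}^n = \beta$) together with $\dag|_L := c$ defines an involution of the second kind on $D$, provided $c(\beta) = \beta^{-1}$. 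This norm-one constraint reduces by Hilbert 90 to $\beta = \alpha/c(\alpha)$ for some $\alpha \in F^\times$, and can be arranged compatibly with the local-invariant conditions by adjusting $\alpha$. The desired $\dag$-unitary embedding $G_\alpha \hookrightarrow D^\times$ then sends $\zeta \mapsto \zeta \in L$ and $\tau \mapsto \tau_{\mr{alg}}$, with unitarity $\zeta^\dag = \zeta^{-1}$, $\tau_{\mr{alg}}^\dag = \tau_{\mr{alg}}^{-1}$ immediate from the construction.

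The main obstacle is the simultaneous realization of the prescribed Brauer invariants, the second-kind compatibility, and the positivity of $\dag$. Positivity reduces, by Albert's criterion for second-kind involutions over a CM field, to a sign condition on $\beta$ at the single complex place of $F$, namely that the associated Hermitian form on $D \otimes_F \CC \cong M_n(\CC)$ is definite; this can be met by further adjusting $\alpha$ at the archimedean place without disturbing the finite-place conditions, via weak approximation. The closing assertion that $G_\alpha$ is \emph{maximal} finite in $D^\times$ is then automatic: any strictly larger finite subgroup of $D^\times$ would inject into the local unit group $D_y^\times \cong (D_n^{\mr{loc}})^\times$, contradicting Hewett's maximality statement.
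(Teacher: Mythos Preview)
Your proposal has a genuine structural gap: the group $G_\alpha$ is larger than the group you are embedding.  Recall from Section~4 of the paper (or from \cite{Hewett}) that $G_\alpha$ is generated by an element $a$ of order $p^\alpha(p^m-1)$ together with $b$ satisfying $b^{p-1}=a^{p^\alpha}$.  In particular $G_\alpha$ contains the $(p^m-1)$st roots of unity, so any central simple $F$-algebra of dimension $n^2$ receiving $G_\alpha$ must contain the field $F(\zeta_{p^\alpha(p^m-1)})$.  Your maximal subfield $L = F\cdot\QQ(\zeta_{p^\alpha})\cdot M$, with $M$ an arbitrary totally real degree-$m$ field, does not contain $\zeta_{p^m-1}$, and your image $\langle \zeta_{p^\alpha},\tau_{\mr{alg}}\rangle$ is not $G_\alpha$.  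Worse, your choice $F=\QQ(\sqrt{-d})$ is incompatible with the theorem in general: the argument of Proposition~\ref{prop:negative} shows that if $F$ is imaginary quadratic and $G_\alpha\subset D^\times$ with $[D:F]=n^2$, then $\varphi(p^m-1)\le 2m$, forcing $p\le 7$ and $m=1$.  The paper avoids this by taking $F$ to be the (typically much larger) CM field cut out inside $\QQ(\zeta_{p^m-1})$ by the subgroup $\langle p\rangle\subset(\ZZ/(p^m-1))^\times$, so that $p$ splits completely in $F$ and $\zeta_{p^m-1}$ lives over $F$ in degree exactly $m$.

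A second gap concerns the simultaneous choice of the cyclic-algebra parameter and the group relation.  You choose $\beta\in F^\times$ freely via Albert--Brauer--Hasse--Noether and weak approximation to match the invariants and positivity, and then declare $\tau\mapsto\tau_{\mr{alg}}$ with $\tau_{\mr{alg}}^n=\beta$.  But the defining relation in $G_\alpha$ is $b^{p-1}=\omega$ for $\omega$ a \emph{specific root of unity}; an arbitrary $\beta$ will not satisfy this, so no homomorphism $G_\alpha\to D^\times$ results.  This is precisely where the paper's real work lies: one \emph{fixes} $\beta=\omega$ (a primitive $(p^m-1)$st root of unity) in the cyclic presentation $D'=M\langle T\rangle/(T^{p-1}=\omega)$ over $L$, and then the content of Lemma~\ref{lem:artyi} and Theorem~\ref{thm:globalembed} is the Artin-map computation showing that this particular $\omega$ produces the required local invariants at the primes over $p$.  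Positivity is then verified directly for this $\omega$ by a trace computation (Section~\ref{sec:involutions}), not by adjusting $\beta$ after the fact, and the extension from $D'$ to $D$ uses Kottwitz's lemma.  Your approach of decoupling the invariant conditions from the group relations cannot work; the two must be satisfied by the same explicit element.
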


As we point out in Section~\ref{sec:moreShimura}, using the theorem above, one can adapt our methods to argue that for \emph{every} odd prime $p$ and chromatic height $n = (p-1)p^{\alpha-1}m$, there exists a unitary Shimura stack with a mod $p$ point with $G_\alpha$ as its automorphism group.  The associated unitary group is defined over a totally real field $F^+$.  However, unless $p \le 7$, this totally real field is \emph{not} $\QQ$.  As \cite{taf} only associates $\TAF$-spectra to Shimura stacks of certain unitary groups over $\QQ$, such global manifestations of Hewett subgroups have no  obvious topological realization.

\subsection*{Organization of the paper}$\quad$

In Section~\ref{sec:taf}, we give an overview of unitary Shimura varieties
of type $(1,n-1)$, and the associated spectra of topological automorphic
forms.  We explain how the automorphism groups of height $n$ mod $p$ points
can be computed using division algebras with involution.

In Section~\ref{sec:explicit}, we give an overview of  
presentations of division
algebras in terms of cohomological data.  In the case of a division algebra
over a local number field, we explain how to make these presentations
explicit using local class field theory.  The material in this section is
essentially well known.

In Section~\ref{sec:Hewett}, we use the explicit presentations of
Section~\ref{sec:explicit} to simultaneously review and 
give a new perspective on Hewett's
maximal finite subgroups $G_\alpha$ in $\MS_n$.

In Section~\ref{sec:negative}, we give one direction of
Theorem~\ref{thm:mainthm}, by showing that $G_\alpha$ cannot be an
automorphism group of a height $n$ mod $p$ point for $p$ odd and $n =
(p-1)p^{\alpha-1}m$ unless $p \le 7$ and $m = 1$.

In Section~\ref{sec:global}, we use global class field theory to give
global analogs of the explicit presentations of division algebras given in
Section~\ref{sec:explicit}.  We use these explicit presentations to embed
the Hewett subgroups $G_\alpha$ into global division algebras.

In Section~\ref{sec:involutions}, we produce involutions on the division
algebras of Section~\ref{sec:global} which act on the finite subgroup
$G_\alpha$ by inversion.  The existence of these involutions completes the
proof of Theorem~\ref{thm:mainthm2}.

In Section~\ref{sec:shaut}, we assume that $p \in \{3, 5, 7\}$ and $n =
(p-1)p^{r-1}$, and show that there is a simple choice of hermitian form of
signature $(1,n-1)$ whose associated Shimura stack has a height $n$ mod $p$
point with automorphism group $G_r$.  This completes the odd primary
cases of Theorem~\ref{thm:mainthm}.

In Section~\ref{sec:rmks}, we give some concluding remarks.  
\begin{itemize}
\item We explain how
the results of this paper relate the Hopkins-Gorbounov-Mahowald approach to
$EO_{p-1}$ to the theory of topological automorphic forms.  
\item 
We explain that for odd primes, 
our results also allow one to realize all of the groups $G_\alpha$ as 
automorphism
groups of some unitary Shimura stack --- but these Shimura stacks do not
have known topological applications (i.e., they are not of type $(1,n-1)$).
\item
We suggest that our explicit global division algebras could shed light on
$EO_{n}$-resolutions of the $K(n)$-local sphere.
\item We explain the applicability of our results to the problem of
producing connective analogs of $EO_n$.
\item We explain that our results combine with a theorem of Mark Hovey to
prove that, at least in some cases, the $\TAF$-spectra do not admit an
orientation for \emph{any} connective cover of $O$.
\item We show that for any prime $p$, and $n = (p-1)p^{r-1}$, there exists
a unitary Shimura stack of type $(1,n-1)$ with a height $n$ mod $p$ point
whose automorphism group contains an element of order $p^r$.  
\item Specializing the previous observation to the prime $2$, we explain
how this gives an automorphism group isomorphic to $G_r$, 
in the case where $p = 2$ and $n = 2^{r-1}$.  This completes the proof of
Theorem~\ref{thm:mainthm}.
\end{itemize}

\subsection*{Conventions}$\quad$

In this paper, we shall use the following notation.
\begin{itemize}
\item $\zeta_n = $ primitive $n$th root of unity.
\item $K[p^n] = $ cyclotomic $\ZZ/p^n$-extension of $K$ (i.e. the fixed
field of $K(\zeta_{p^{n+1}})$ by the subgroup of $(\ZZ/p^{n+1})^\times =
\Gal(K(\zeta_{p^{n+1}})/K)$ of order $p-1$).  
Here, we are assuming $K$ satisfies
$[K(\zeta_{p^{n+1}}):K] = (p-1)p^{n}$. 
\item $\Art$ denotes the local/global Artin map.
\item $H^*(L/K) = $ the Galois cohomology group $H^*(\Gal(L/K), L^\times)$.
\item $\AF = $ the rational adeles.  If $S$ is a set of places, $\AF^S$
denotes the adeles away from $S$, whereas $\AF_S$ denotes the adeles at
$S$. For a global number field $K$, $\AF_K$ denotes the $K$-adeles.
\item $\mb{I_K} = $ the $K$-ideles $\AF_K^\times$.
\end{itemize}

\subsection*{Acknowledgments}$\quad$

The first author completed a portion of this work while visiting Harvard
university, and is thankful for their hospitality.  The authors would also
like to thank Tyler Lawson and Niko Naumann, for their constructive 
comments on an
earlier version of this paper.  The authors also thank the referee for suggesting valuable expository improvements.

\section{Overview of topological automorphic forms}\label{sec:taf}

We review the theory of topological automorphic forms presented in
\cite{taf}.  (The $p$-integral models of the Shimura varieties discussed here are special cases of those constructed and studied in \cite{Kottwitz}.)

Fix a prime $p$ and consider the following initial data: 
\begin{align*}
F  = & \: \text{quadratic imaginary extension of $\QQ$ in which $p$ splits as
$u\bar{u}$,} \\
B = & \: \text{central simple $F$-algebra of dimension $n^2$ which splits at $u$ and
$\bar{u}$,} \\
* = & \: \text{positive involution of the second kind on $B$,} \\
\mc{O}_{B,(p)} = & \: \text{$*$-invariant maximal $\mc{O}_{F,(p)}$-order of
$B$,}
\\
V = \: & \text{$B$ module of rank $1$,} \\
\bra{-,-} = \: & \text{$\QQ$-valued $*$-hermitian alternating
form of signature $(1, n-1)$,}
\\
\epsilon = & \text{rank $1$ idempotent of $\mc{O}_{B,u} \cong
M_n(\ZZ_p)$.}
\end{align*}
Let $\iota$ denote the involution on
$\End_B(V)$, defined by 
$$ \bra{\alpha v, w} = \bra{v, \alpha^\iota w}. $$  
Let $GU = GU_V$ be the associated unitary similitude group over $\QQ$, 
with $R$-points
\begin{align*}
GU(R) = & \{ g \in \End_B(V) \otimes_\QQ R \: : \: \bra{gv, gw} = \nu(g)
\bra{v,w}, \: \nu(g) \in R^\times \} \\
= & \{ g \in \End_B(V) \otimes_\QQ R \: : \: g^\iota g \in R^\times \}
\end{align*}
(so that $\nu(g) = g^\iota g \in R^\times$).

We let $V^{p,\infty}$ denote $V \otimes \AF^{p,\infty}$.
For every compact open subgroup 
$$ K \subset GU(\AF^{p,\infty}) $$
there is a Deligne-Mumford stack $\Sh(K)/ \Spec(\ZZ_p)$.  For a
locally noetherian connected $\ZZ_p$-scheme $S$,
and a geometric point $s$ of $S$,
the $S$-points of $\Sh(K)$ are the groupoid whose objects are tuples
$(A,i,\lambda,[\eta]_{K})$, with:
\begin{center}
\begin{tabular}{lp{19pc}}
$A$, & an abelian scheme over $S$ of dimension $n^2$,\\
$\lambda: A \rightarrow A^\vee$, & a $\ZZ_{(p)}$-polarization, \\
$i: \mc{O}_{B,(p)} \hookrightarrow \End(A)_{(p)}$, & an inclusion of
rings, such that the $\lambda$-Rosati involution is compatible with
conjugation, \\
$[\eta]_K$, & a $\pi_1(S,s)$-invariant $K$-orbit
of $B$-linear similitudes: \\
& \qquad $ \eta: (V^{p,\infty}, \bra{-,-}) \xrightarrow{\cong}
(V^p(A_s), \bra{-,-}_\lambda), $
\end{tabular}
\end{center}
subject to the following condition:
\begin{equation}\label{eq:condition}
\text{the coherent sheaf $\Lie A \otimes_{\mc{O}_{F,p}} \mc{O}_{F,u}$ is
locally free of rank $n$.}
\end{equation}
Here, since $S$ is a $\ZZ_p$-scheme, the action of $\mc{O}_{F,(p)}$ on
$\Lie A$
factors through the $p$-completion $\mc{O}_{F,p}$.

The morphisms 
$$ (A, i, \lambda, \eta) \rightarrow (A', i', \lambda',\eta') $$
of the groupoid of $S$-points of $\Sh(K)$ are the
prime-to-$p$ quasi-isogenies of abelian schemes 
$$ \alpha : A \xrightarrow{\simeq} A' $$
such that
\begin{alignat*}{2}
\lambda & = r\alpha^\vee \lambda' \alpha, 
\quad && 
r \in \ZZ_{(p)}^\times, \\
i'(z)\alpha & = \alpha i(z), 
\quad &&
z \in \mc{O}_{F,(p)}, \\
[\eta']_K & = [\eta \circ \alpha_*]_K.
\quad &&
\end{alignat*}

\begin{rmk}
If the algebra $B$ is split, then the moduli interpretation of the points
of $\Sh(K)$ may be simplified.  Namely, the idempotent $\epsilon$ may be
extended to a rank $1$ idempotent on $B$, and the moduli of $B$-linear
abelian schemes of dimension $n^2$ becomes Morita equivalent to the moduli of
$F$-linear abelian schemes of dimension $n$.
Thus, in this case, the $S$-points of $\Sh(K)$ could instead be taken to be 
a groupoid of tuples
$(A,i,\lambda,[\eta]_{K})$, with $(A,i)$ an abelian scheme of dimension $n$
with complex multiplication by $F$.
\end{rmk}

The $p$-completion $\Sh(K)^\wedge_p/\Spf(\ZZ_p)$ is determined by the
$S$-points of $\Sh(K)$ on which $p$ is locally nilpotent.  On such schemes,
the abelian scheme $A$ 
has an $n^2$-dimensional
$p$-divisible group $A(p)$ of height $2n^2$. The composite 
$$ \mc{ O}_{B,(p)} \xrightarrow{i} \End(A)_{(p)}
\to \End(A(p)) $$ 
factors through the $p$-completion
\[
\mc{O}_{B,p} \cong \mc{O}_{B,u} \times \mc{O}_{B,\bar u} \cong 
M_n(\mb Z_p) \times M_n(\mb Z_p).
\]
Therefore, the action of $\mc{ O}_B$ naturally splits $A(p)$ into two
summands, $A(u)$ and $A(\bar u)$, both of height $n^2$.  For such schemes
$S$, Condition~(\ref{eq:condition}) is equivalent to the condition that
$A(u)$ is $n$-dimensional.  This forces the formal group of $A$ to split
into $n$-dimensional and $n(n-1)$-dimensional formal summands.  The action of
$\mc{O}_{B,u}$ on $A(u)$ provides a splitting
$$ A(u) = \epsilon A(u)^n $$
where $\epsilon A(u)$ is a $1$-dimensional $p$-divisible group 
of height $n$.

A theorem of Jacob Lurie \cite[Thm.~8.1.4]{taf} associates to a
$1$-dimensional $p$-divisible group $\GG$ over a 
locally noetherian separated Deligne-Mumford stack
$X/\mathrm{Spec}(\ZZ_p)$ which is locally a universal deformation of all of its
mod $p$ points, a (Jardine fibrant) presheaf
of $E_\infty$-ring spectra $\mc{E}_\GG$ on the site
$(X^\wedge_p)_{et}$.  The presheaf $\mc{E}_{\GG}$ is functorial in $(X,
\GG)$ (the precise statement of this functoriality is given in
\cite[Thm.~8.1.4]{taf}).  

If $(\mbf{A}, \mbf{i}, \pmb{\lambda}, [\pmb{\eta}])$
is the universal tuple over $\Sh(K)$, then the $p$-divisible group
$\epsilon \mbf{A}(u)$ satisfies the hypotheses of Lurie's theorem 
\cite[Sec.~8.3]{taf}.  The 
associated sheaf will be
denoted
$$ \mc{E}_{GU} := \mc{E}_{\epsilon\mbf{A}(u)}. $$
The $E_\infty$-ring spectrum of topological automorphic forms is obtained
by taking the homotopy global sections:
$$ \TAF_{GU}(K) := \mc{E}_{GU}(\Sh(K)^\wedge_p). $$

Let $\epsilon A(u)^0$ denote the formal subgroup of the $p$-divisible
group $\epsilon A(u)$.
Let $\Sh(K)_{\FF_p}$ denote the reduction mod $p$ of $\Sh(K)$, and let
$$ \Sh(K)^{[n]} \subseteq \Sh(K)_{\FF_p} $$
denote the $0$-dimensional substack where the height of the
formal group $\epsilon A(u)^0$ is equal to $n$.

Picking a point $\mbf{A}_0 = (A_0, i_0, \lambda_0, [\eta_0]_K) \in
\Sh(K)^{[n]}(\bar{\FF}_p)$, the
ring of prime-to-$p$ $\mc{O}_B$-linear quasi-endomorphisms of 
$A_0$ is the unique maximal
$\mc{O}_{F,(p)}$-order in a division algebra $D$ with center $F$:
$$ \End_{\mc{O}_{B,(p)}}(A_0)_{(p)} = \mc{O}_{D,(p)} \subset D. $$
We have, for $x$ a finite place of $F$,
$$
\Inv_x D = 
\begin{cases}
\Inv_x B, & x \not\vert p, \\
\frac{1}{n}, & x = u, \\
\frac{n-1}{n}, & x = \bar{u}.
\end{cases}
$$
The prime-to-$p$ polarization $\lambda_0$ of $A_0$ gives rise to an
associated Rosati involution $\dag$ on $D$.  The Rosati involution is
a positive involution of the second kind.
Define the associated unitary similitude group $GU_{\mbf{A}_0}/\mc{\ZZ}_{(p)}$
by
$$ GU_{\mbf{A}_0}(R) = 
\{ g \in \mc{O}_{D,(p)} \otimes_{\ZZ_{(p)}} R \: : \: g^\dag g \in R^\times
\}. $$
Fixing a representative $\eta_0$ of $[\eta_0]$ gives a $B$-linear isomorphism
$$ \eta_0: V^{p,\infty} \rightarrow V^{p}(A_0). $$
The induced $B$-linear action of $\mc{O}_{D,(p)}$ on $V^{p,\infty}$ is an
action by similitudes, and induces an
isomorphism
$$ \xi_{\eta_0} : GU_{\mbf{A}_0}(\AF^{p,\infty}) \xrightarrow{\cong}
GU(\AF^{p,\infty}). $$
Under this isomorphism, the subgroup $K \subset GU(\AF^{p,\infty})$ may be
regarded as a subgroup of $GU_{\mbf{A}_0}(\AF^{p,\infty})$.  We define
$$ \Gamma(K) = GU_{\mbf{A}_0}(\ZZ_{(p)}) \cap K \subset D^\times. $$
Since $K$ is open, the group $\Gamma(K)$ is finite.
Since $GU_{\mbf{A}_0}(\ZZ_p) = \mc{O}^\times_{D,u} = \MS_n$, each of
the finite groups $\Gamma(K)$ are finite subgroups of the Morava
stabilizer group.

\begin{lem}[{\cite[Prop.~14.1.2]{taf}}]
The automorphism group of the point $\mbf{A}_0$ is given by
$$ \Aut(\mbf{A}_0) = \Gamma(K). $$
\end{lem}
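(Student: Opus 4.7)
The plan is to unpack the definition of an automorphism in the groupoid of $\bar{\FF}_p$-points of $\Sh(K)$ and match each of the three compatibility conditions against one of the three clauses that cut out $\Gamma(K) = GU_{\mbf{A}_0}(\ZZ_{(p)}) \cap K$ inside $D^\times$. By definition, an automorphism of $\mbf{A}_0$ is a prime-to-$p$ quasi-isogeny $\alpha: A_0 \to A_0$ satisfying (i) $\alpha i_0(z) = i_0(z) \alpha$ for all $z \in \mc{O}_{B,(p)}$, (ii) $\lambda_0 = r\, \alpha^\vee \lambda_0 \alpha$ for some $r \in \ZZ_{(p)}^\times$, and (iii) $[\eta_0]_K = [\eta_0 \circ \alpha_*]_K$.

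First I would use (i) to identify $\alpha$ with an element of the group of units $\End_{\mc{O}_{B,(p)}}(A_0)_{(p)}^\times = \mc{O}_{D,(p)}^\times \subset D^\times$, invoking the identification already established in the excerpt that the ring of prime-to-$p$ $\mc{O}_B$-linear quasi-endomorphisms of $A_0$ is $\mc{O}_{D,(p)}$. Next, I would translate (ii) using the definition of the Rosati involution $\dag$ associated to $\lambda_0$: conjugation by $\lambda_0$ sends $\alpha^\vee$ to $\alpha^\dag$, so condition (ii) becomes $\alpha^\dag \alpha = r^{-1} \in \ZZ_{(p)}^\times$. This is exactly the defining condition for $\alpha \in GU_{\mbf{A}_0}(\ZZ_{(p)})$.

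Then I would rewrite condition (iii): by definition of a $K$-orbit, the equality $[\eta_0]_K = [\eta_0 \circ \alpha_*]_K$ says that there exists $k \in K$ with $\alpha_* \circ \eta_0 = \eta_0 \circ k$, equivalently $k = \eta_0^{-1} \alpha_* \eta_0$. Since $\xi_{\eta_0}$ was defined precisely by conjugating the $\mc{O}_{D,(p)}$-action on $V^p(A_0)$ back through $\eta_0$, this element is $\xi_{\eta_0}(\alpha)$. Thus (iii) is equivalent to $\xi_{\eta_0}(\alpha) \in K$, and combining the three translations gives $\Aut(\mbf{A}_0) \subseteq GU_{\mbf{A}_0}(\ZZ_{(p)}) \cap \xi_{\eta_0}^{-1}(K) = \Gamma(K)$. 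The reverse containment is immediate: any element of $\Gamma(K)$ lies in $\mc{O}_{D,(p)}^\times$, satisfies the polarization compatibility by construction of $GU_{\mbf{A}_0}$, commutes with the $\mc{O}_{B,(p)}$-action (since elements of $\mc{O}_{D,(p)} = \End_{\mc{O}_{B,(p)}}(A_0)_{(p)}$ do so by definition), and preserves the level structure mod $K$ by definition of $\xi_{\eta_0}$.

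The main technical point to keep honest is step three: correctly matching the direction of composition in the $K$-orbit equivalence with the conjugation convention used to define $\xi_{\eta_0}$, so that the element produced is literally $\xi_{\eta_0}(\alpha)$ and not its inverse. A secondary point is verifying that the Rosati involution on $\End_{\mc{O}_{B,(p)}}(A_0)_{(p)}$ induced by $\lambda_0$ coincides with the involution $\dag$ used in the definition of $GU_{\mbf{A}_0}$; this is built into the construction, but worth stating explicitly before invoking it.
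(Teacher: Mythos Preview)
The paper does not give a proof of this lemma; it is stated with a citation to \cite[Prop.~14.1.2]{taf} and nothing more. Your argument is the natural definitional unwinding and is correct: the three compatibility conditions on a prime-to-$p$ quasi-isogeny translate exactly into membership in $\mc{O}_{D,(p)}^\times$, the similitude condition $\alpha^\dag \alpha \in \ZZ_{(p)}^\times$, and the condition $\xi_{\eta_0}(\alpha) \in K$, respectively, and this is precisely how the cited proof in \cite{taf} proceeds.
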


Let $E_n$ be the Morava $E$-theory associated to a height $n$ formal
group over $\bar{\FF}_p$. 

\begin{thm}[{\cite[Cor.~14.5.6]{taf}}]
There is an equivalence
$$ \TAF_{GU}(K)_{K(n)} \simeq 
\left(
\prod_{[g] \in GU_{\mbf{A}_0}(\ZZ_{(p)})
\backslash GU(\AF^{p,\infty})/K}
E_n^{h\Gamma(gKg^{-1})}
\right)^{h\Gal(\bar{\FF}_p/\FF_p)}.
$$
\end{thm}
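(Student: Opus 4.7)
The plan is to prove the equivalence in three stages: first localize the global sections to the height $n$ locus, then identify stalks at each supersingular point with Lubin--Tate theories, and finally parameterize the points together with their automorphisms via a double coset formula and descend from $\bar{\FF}_p$ to $\FF_p$.

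First I would apply the general chromatic machinery to the homotopy global sections $\TAF_{GU}(K) = \mc{E}_{GU}(\Sh(K)^\wedge_p)$. Since $\epsilon \mbf{A}(u)^0$ has height strictly less than $n$ away from the closed substack $\Sh(K)^{[n]}$, the presheaf $\mc{E}_{GU}$ has $K(n)$-acyclic sections on any étale open disjoint from $\Sh(K)^{[n]}$. Consequently $K(n)$-localization commutes past global sections, and the localized sheaf is supported on the formal neighborhood of $\Sh(K)^{[n]}$. Since $\Sh(K)^{[n]}$ is $0$-dimensional, this formal neighborhood is a disjoint union, over the (finitely many) $\bar{\FF}_p$-points of $\Sh(K)^{[n]}$, of the formal spectra of the universal deformation rings of those points, equivariantly with respect to $\Gal(\bar{\FF}_p/\FF_p)$.

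Next I would invoke Lurie's theorem, together with the Serre--Tate theorem, to identify the stalk of $\mc{E}_{GU}$ at a geometric point $\mbf{A}_0 \in \Sh(K)^{[n]}(\bar{\FF}_p)$ with the Morava $E$-theory $E_n$, and the stalk of the associated étale sheaf at the fpqc cover $\Spf \mc{O}_{\Sh(K)^\wedge_p, \mbf{A}_0}^{sh} \to \Sh(K)^\wedge_p$ with $E_n^{h\Aut(\mbf{A}_0)}$. This uses that the formal group $\epsilon\mbf{A}(u)^0$ deforms via its Serre--Tate coordinates to the universal deformation of a height $n$ formal group of dimension $1$, so that the underlying formal scheme is exactly $\Spf(\pi_0 E_n)$, and $\Aut(\mbf{A}_0)$ acts through its image in $\Aut(\epsilon A_0(u)^0) \subset \MS_n$.

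Then I would parameterize the set $\Sh(K)^{[n]}(\bar{\FF}_p)$ via the double coset formula
\[
\Sh(K)^{[n]}(\bar{\FF}_p) \cong GU_{\mbf{A}_0}(\ZZ_{(p)}) \backslash GU(\AF^{p,\infty})/K.
\]
The main content of this step is that at a supersingular point there is a single prime-to-$p$ quasi-isogeny class of tuples (all height $n$ $p$-divisible groups over $\bar{\FF}_p$ with the requisite $\mc{O}_B$-action are isogenous, by the classification of isocrystals together with the fact that $B$ splits at $u$), so the fiber is a torsor for $GU_{\mbf{A}_0}(\AF^{p,\infty}) \cong GU(\AF^{p,\infty})$; one then takes the quotient by the prime-to-$p$ level structure $K$ on the right and by the $\ZZ_{(p)}$-quasi-isogeny group $GU_{\mbf{A}_0}(\ZZ_{(p)})$ on the left. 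Combined with the previous lemma identifying $\Aut(g\cdot \mbf{A}_0) = \Gamma(gKg^{-1})$, this yields the product of homotopy fixed point spectra inside the parentheses.

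The last step is Galois descent from $\bar{\FF}_p$ to $\FF_p$: the product above is computed over $\bar{\FF}_p$-points, but $\Sh(K)^{[n]}$ is defined over $\FF_p$, so I take $\Gal(\bar{\FF}_p/\FF_p)$-homotopy fixed points. Concretely, this uses the fpqc descent spectral sequence for $\mc{E}_{GU}$ along $\Spec \bar{\FF}_p \to \Spec \FF_p$ at each closed point, which converges strongly because each $E_n^{h\Gamma(gKg^{-1})}$ is $K(n)$-local and $\Gamma(gKg^{-1})$ is finite. The main obstacle, and the step where care is needed, is verifying the double coset description of $\Sh(K)^{[n]}(\bar{\FF}_p)$ together with the identification of automorphism groups under the quasi-isogeny action; the rest is a fairly mechanical combination of Lurie's theorem, Serre--Tate, and chromatic localization.
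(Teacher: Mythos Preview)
The paper does not prove this theorem at all: it is quoted as \cite[Cor.~14.5.6]{taf} and used as a black box in the overview section. There is therefore no ``paper's own proof'' to compare against here; the argument lives entirely in the cited reference.

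That said, your outline matches the strategy one expects from \cite{taf}: reduce $K(n)$-localization of global sections to the formal neighborhood of the $0$-dimensional locus $\Sh(K)^{[n]}$, identify each stalk with $E_n$ via Lurie's theorem together with Serre--Tate, use the uniformization of $\Sh(K)^{[n]}(\bar{\FF}_p)$ by the double coset set (this is \cite[Thm.~14.5.1]{taf}, and the paper records the accompanying identification $\Aut(\mbf{A}_0)=\Gamma(K)$ as \cite[Prop.~14.1.2]{taf}), and finally take Galois homotopy fixed points to descend to $\FF_p$. The one place where your sketch is slightly loose is the claim that there is a single prime-to-$p$ isogeny class at height $n$: in \cite{taf} this is handled by combining the Honda--Tate classification with a mass-formula/adelic argument rather than a bare isocrystal statement, and it requires the compatibility of polarizations as well as $\mc{O}_B$-linear structure. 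But as a high-level outline of the cited result, your proposal is on target.
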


Therefore, the problem of realizing $EO_n$'s in the
$K(n)$-localization of a $\TAF$-spectrum amounts to determining which
maximal finite subgroups of $\MS_n$ arise as a $\Gamma(K)$ for some
choice of $F$, form of $GU$, and subgroup $K$.

\section{Explicit division algebras}\label{sec:explicit}

Let $K$ be a local or global number field.
Following Serre \cite{Serre}, we make explicit the isomorphism
$$ \Br(K) \cong H^2(\Gal(\bar{K}/K); \bar{K}^\times). $$
We will regard elements of $H^2(G,A)$ as corresponding to
extensions
$$ 1 \rightarrow A \rightarrow E \rightarrow G \rightarrow 1.
$$
Since we have
$$ H^2(\Gal(\bar{K}/K); \bar{K}^\times) = 
\colim_{\substack{M/K \\ [M:K] < \infty}}
H^2(\Gal(M/K);
M^\times),
$$
every element of $H^2(\Gal(\bar{K}/K); \bar{K}^\times)$
originates in $H^2(\Gal(M/K), M^\times)$ for some finite
extension $M$ of $K$.

Given a central simple algebra $B/K$ with $[B:K] = n^2$, we
choose a maximal subfield $M \subseteq B$ containing $K$,
so that $[M:K] = n$.  Define a group $E$ by
$$ E = \{ x \in B^\times \: : \: xMx^{-1} = M \}. $$
Then the short exact sequence
$$ 1 \rightarrow M^\times \rightarrow E \rightarrow \Gal(M/K)
\rightarrow 1 $$
gives the desired class in $H^2(\Gal(M/K); M^\times)$.

Conversely, suppose that we are given an element of
$H^2(\Gal(M/K), M^\times)$ corresponding to an extension
$$ 1 \rightarrow M^\times \rightarrow E \rightarrow \Gal(M/K)
\rightarrow 1, $$
we may express the corresponding central simple algebra as
$$
B = \ZZ[E] \otimes_{\ZZ[M^\times]} M. 
$$

We explain how to make this construction explicit in the case where $K$ is
a finite extension of $\QQ_p$ and $M$ is a cyclic extension of $K$.
Let
$$ \Art_{M/K} : K^\times/N(M^\times) \xrightarrow{\cong}
\Gal(M/K) $$
be the local Artin map.  

\begin{prop}\label{prop:Ba}
Let $K$ be a finite extension of $\QQ_p$, and suppose that 
$M$ is a degree $n$ cyclic extension of $K$.
Fix an injection
$$ \chi: \Gal(M/K) \hookrightarrow \QQ/\ZZ, $$
and let $\sigma \in \Gal(M/K)$ be the unique element satisfying
$$ \chi(\sigma) = 1/n. $$
Then for each element $a \in K^\times$ there is an extension
$$ 1 \rightarrow M^\times \xrightarrow{i} E_{a} \xrightarrow{j} \Gal(M/K)
\rightarrow 1
$$
such that 
\begin{enumerate}
\item there is a lift of $\sigma$ to $\td{\sigma} \in E_a$ such that
$\td{\sigma}^n = i(a)$, and
\item the corresponding central simple algebra $B_{[a]}$ has invariant
$$ \Inv B_{[a]} = \chi(\Art_{M/K}(a)). $$
\end{enumerate}
\end{prop}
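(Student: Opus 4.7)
The plan is to construct $E_a$ by hand as the classical cyclic crossed product, identify the resulting central simple algebra $B_{[a]}$ as the cyclic algebra $(M/K,\sigma,a)$, and then read off its invariant from local class field theory.

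First, I would define $E_a$ to be the group presented by the generators of $M^\times$ together with a single symbol $\td{\sigma}$, subject to the relations $\td{\sigma}\, m\, \td{\sigma}^{-1} = \sigma(m)$ for $m \in M^\times$, and $\td{\sigma}^n = a$. Equivalently, $E_a$ is the quotient of the semidirect product $M^\times \rtimes_\sigma \langle t \rangle$ by the central relation $t^n = a$. As a set, $E_a$ decomposes as $\bigsqcup_{k=0}^{n-1} M^\times \cdot \td{\sigma}^k$, and the assignment $m\, \td{\sigma}^k \mapsto \sigma^k$ defines a surjection $j: E_a \to \Gal(M/K)$ with kernel $i(M^\times)$. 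This immediately produces the desired short exact sequence, and condition (1) holds by construction.

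Next, using the set-theoretic section $s(\sigma^k) = \td{\sigma}^k$ for $0 \le k < n$, I would extract the associated $2$-cocycle $f \in Z^2(\Gal(M/K), M^\times)$, which works out to
$$
f(\sigma^i, \sigma^j) = \begin{cases} 1 & \text{if } i+j < n, \\ a & \text{if } i+j \ge n, \end{cases}
$$
for $0 \le i,j < n$. This is the standard cocycle for the cyclic algebra $(M/K,\sigma, a)$, so under the construction $B_{[a]} = \ZZ[E_a] \otimes_{\ZZ[M^\times]} M$ recalled earlier, $B_{[a]}$ is identified with the classical cyclic algebra $M\langle \td{\sigma}\rangle / (\td{\sigma}m - \sigma(m)\td{\sigma},\ \td{\sigma}^n - a)$.

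Finally, to prove (2), I would invoke two compatibilities from local class field theory. For $M/K$ cyclic of degree $n$, cyclic-group periodicity of Tate cohomology, together with the cyclic-algebra construction, gives an isomorphism $K^\times/N_{M/K}(M^\times) \xrightarrow{\cong} H^2(\Gal(M/K), M^\times)$ under which the coset $[a]$ corresponds to the cocycle $f$ above. Local reciprocity then identifies the composite
$$
K^\times / N_{M/K}(M^\times) \xrightarrow{\cong} H^2(\Gal(M/K),M^\times) \xrightarrow{\Inv} \tfrac{1}{n}\ZZ/\ZZ
$$
with $\chi \circ \Art_{M/K}$, where $\chi$ is normalized to send $\sigma$ to $1/n$. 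Evaluating at $a$ yields $\Inv B_{[a]} = \chi(\Art_{M/K}(a))$. The main obstacle is none of these individual steps but rather a consistent choice of sign and normalization conventions: one must verify that the relation $\td{\sigma}^n = a$ (as opposed to $a^{-1}$) is the one corresponding to $[a]$ under the periodicity isomorphism with the chosen orientation of $\chi$, which amounts to comparing the conventions of Serre's presentation of $\Br(K)$ with those built into the local Artin map.
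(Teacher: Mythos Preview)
Your proposal is correct and follows essentially the same route as the paper: both produce the extension $E_a$ via the standard cyclic cocycle $f(\sigma^i,\sigma^j)=a^{[i+j\ge n]}$ and then identify the invariant using the local class field theory formula relating $\Inv$, the Artin map, and $\chi$. The one difference worth noting is that the paper addresses head-on the normalization issue you flag at the end: rather than invoking the periodicity isomorphism abstractly, it writes the cocycle as the cup product $a\cup\delta(\chi)$, computes an explicit cochain representative of $\delta(\chi)$ from the lift $\td{\chi}:G\to[0,1)\subset\QQ$, and then quotes the identity $\chi(\Art(a))=\Inv(a\cup\delta(\chi))$ from Milne. This pins down the sign once and for all, so where you say ``one must verify'' the paper actually carries out that verification.
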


\begin{proof}
Let $G = \Gal(M/K)$.
Consider the cup product pairing
$$ H^0(G, M^\times) \times H^2(G, \ZZ) \xrightarrow{\cup}
H^2(G, M^\times) \xrightarrow[\cong]{\Inv} \QQ/\ZZ, $$
and let $\delta$ denote the connecting homomorphism in the long exact
sequence 
$$ \Hom(G, \QQ/\ZZ) = H^1(G, \QQ/\ZZ) \xrightarrow{\delta} H^2(G, \ZZ). $$
Then we have the following formula \cite[Prop.~4.1]{Milne}:
\begin{equation}\label{eq:Serre}
\chi(\Art(a)) = \Inv(a \cup \delta(\chi)). 
\end{equation}
We may explicitly compute a cochain representative for $\delta(\chi)$.  Let 
$$ \td{\chi}: G \rightarrow \QQ $$
be the unique lift of $\chi$ such that the values of $\td{\chi}$ lie in the
interval $[0,1)$.  Then the cohomology element $\delta(\chi)$ is
represented by the normalized $\ZZ$-valued $2$-cochain
$$ \phi(g_1, g_2) = \td{\chi}(g_2) - \td{\chi}(g_1g_2) + \td{\chi}(g_1). $$
In particular, we have
\begin{align*}
\phi(\sigma^i, \sigma^j) & = 0 \quad \text{if $i + j < n$}, \\
\phi(\sigma^{n-1}, \sigma) & = 1. 
\end{align*}
The cup product $a \cup \delta(\chi)$ is represented by the normalized
$M^\times$-valued $2$-cochain
$$ \phi_a(g_1, g_2) = a^{\phi(g_1, g_2)}. $$
Associated to the $2$-cochain $\phi_a$ is the extension
$$ E_a = M^\times \times G $$
with multiplication
$$ (x_1, g_1)\cdot (x_2, g_2) = (x_1 \cdot {}^{g_1}x_2 a^{\phi{g_1,g_2}},
g_1g_2) $$
We define 
$$ \td{\sigma} = (1,\sigma) \in E_a $$
and compute:
\begin{align*}
\td{\sigma}^n & = (1,\sigma)^{n-1}(1,\sigma) \\
& = (1, \sigma^{n-1})(1,\sigma) \\
& = (a^{\phi(\sigma^{n-1},\sigma)}, \sigma^n) \\
& = (a, 1).
\end{align*}
By (\ref{eq:Serre}), the associated central simple algebra has invariant
$$ \Inv(B_{[a]}) = \chi(\Art(a)). $$
\end{proof}

\begin{rmk}
The algebra $B_{[a]}$ admits the presentation (as a non-commutative
$K$-algebra)
$$ B_{[a]} = M\bra{S}/(S^n = a, Sx = x^\sigma S, x \in M). $$
\end{rmk}

\begin{rmk}\label{rmk:Ba}
The analog of Proposition~\ref{prop:Ba} holds in the
archimedean case as well.  The only non-trivial case to discuss is the case
where $K = \RR$,  $M = \CC$, and $n =
2$.  The character $\chi$ and the element $\sigma$
are uniquely determined.  The algebra $B_{[a]}$ is either $\mb{H}$ or $M_2(\CC)$, 
depending on whether $a$ is negative or positive, that is to say, depending
on the image of $a$ under the Artin map
$$ \Art_{\CC/\RR}: \RR^\times/N(\CC^\times) \xrightarrow{\cong}
\Gal(\CC/\RR) \cong C_2. $$
\end{rmk}

\begin{ex}\label{ex:D1n}
Let $K = \QQ_p$
and let $M = \QQ_{p^n}$ denote the unique
unramified extension of $\QQ_p$ of degree $n$.  
We have.
$$ \Gal(\QQ_{p^n}/\QQ_p) = C_n = \bra{\sigma} $$
where we fix a generator $\sigma = \Art(p)$.  The Artin map is normalized
so that $\sigma$ is a lift of the Frobenius on $\FF_{p^n}$.
Then the central simple algebra $B_{[p^i]}$ has invariant $i/n$, and
presentation:
$$ B_{[p^i]} = \QQ_{p^n}\bra{S}/(S^n = p^i, Sx = x^\sigma S, x \in \QQ_{p^n}). $$
This may be compared to \cite[A2.2.16]{Ravenel}.
\end{ex}

We end this section by giving a canonical presentation of $M_k(B_{[a]})$
given our presentation of $B_{[a]}$.  Let
$$ (M^\times)^k \cong \Ind_{C_{n}}^{C_{nk}} M^\times $$
be the induced $C_{nk}$-module.  The isomorphism above is given so that for
a generator $\sigma'$ of $C_{nk}$, and $\sigma = (\sigma')^k$ a corresponding
generator of the subgroup $C_{k} \le C_{nk}$, we have, for $(m_i) \in
(M^\times)^k$,
$$ \sigma' \cdot (m_1, \ldots, m_k ) = (m_2, \ldots, m_{k}, 
m_1^\sigma). $$
Then the algebra $M_k(B_{[a]})$ admits a presentation
$$ M_k(B_{[a]}) \cong 
M\bra{S}/(S^{nk} = (a,\ldots, a), Sx = x^{\sigma'}S, x \in M^k). $$
The subring $M^k$ is a maximal commutative subalgebra in $B_{[a]}$.
Letting $E'$ denote the normalizer of $(M^\times)^k \subseteq
M_k(B_{[a]})^\times$, we have a short exact sequence
$$ 1 \rightarrow (M^\times)^k \rightarrow E' \rightarrow C_{nk} \rightarrow
1. $$
The extension $E'$ is classified by the image of the cohomology class
$[E_a]$ under the Shapiro isomorphism
$$ H^2(C_n, M^\times) \cong H^2(C_{nk}, \Ind_{C_{n}}^{C_{nk}} M^\times )
\cong H^2(C_{nk}, (M^\times)^k). $$

\section{Elementary presentation of Hewett
subgroups}\label{sec:Hewett}

In this section we explain how the theory of the previous
section may be used to understand the finite subgroups of local
division algebras studied by Hewett \cite{Hewett}.

Let $B$ be a central division algebra over $K$.
Throughout this paper, our technique for constructing finite 
subgroups of $B^\times$ will be construct extensions $G$ of finite subgroups $N$ of 
the group of units of a maximal
subfield $M$ containing $K$ of the following form.
$$
\xymatrix{
1 \ar[r] 
& N \ar@{^{(}->}[d] \ar[r]
& G \ar@{^{(}->}[d] \ar[r] 
& G/N \ar@{^{(}->}[d] \ar[r] 
& 1
\\
1 \ar[r]
& M^\times \ar[r]
& E \ar[r] 
& \Gal(M/K) \ar[r]
& 1
}
$$
Here $E \le B^\times$ is the normalizer of $M^\times$, as in
Section~\ref{sec:explicit}.

Let $p$ be an odd prime, fix $r > 0$, and let 
$$ n = (p-1)p^{r-1}k $$
with $k$ coprime to $p$.
Let $D$ be the central division algebra over $\QQ_p$ of
invariant $1/n$.  
Let $\mc{O}_D$ be the unique maximal order
of $D$, and let
$$ \MS_n = \mc{O}_D^{\times} $$
denote the $n$th Morava stabilizer group.

Hewett \cite{Hewett} showed that the isomorphism
classes of finite subgroups of $\MS_n$ are given by
$$ \{ G_\alpha \: : \: \text{ $0 \le \alpha \le r$} \} $$
such that $G_\alpha$ contains an element of maximal $p$-power order
$p^\alpha$.  We give an elementary construction of these subgroups.

For $\alpha = 0$, the group $G_0$ is cyclic of order $p^n-1$.  Explicitly
it can be taken to be the embedding
$$ G_0 \cong \FF_{p^n}^\times \hookrightarrow \ZZ^\times_{p^n} \hookrightarrow
\mc{O}_D^\times. $$
Here $\ZZ_{p^n} = W(\FF_{p^n})$ is the ring of integers of $\QQ_{p^n}$.

For $0 < \alpha \le r$, the group $G_\alpha$ is metacyclic, with
presentation
$$ G_\alpha = \bra{a,b \: : \: a^{p^\alpha(p^{m}-1)} = 1,
bab^{-1} = a^t, b^{p-1} = a^{p^\alpha}}, $$
where 
\begin{enumerate}
\item $t$ is an integer whose image in $(\ZZ/p^\alpha)^\times$ 
has order $p-1$, and
\item $m = kp^{r-\alpha}$.
\end{enumerate}
The group $G_\alpha$ fits into a short exact
sequence
$$ 1 \rightarrow C_{p^\alpha(p^{m}-1)} \rightarrow G_\alpha
\rightarrow C_{p-1} \rightarrow 1 $$
where the group $C_{p^\alpha(p^m-1)}$ is generated by $a$, and the group
$C_{p-1}$ is generated by the image of $b$.

We now give an explicit, elementary embedding of $G_\alpha$ into
$\mc{O}_D^\times$.  Consider the following tower of abelian Galois extensions.
$$
\xymatrix{
M = \QQ_{p^m}(\zeta_{p^{\alpha}}) \phantom{= M} \ar@{-}[d]^{C_{p-1}} \\
L = \QQ_{p^m}[p^{\alpha-1}] \phantom{= L} \ar@{-}[d]^{C_m \times \ZZ/p^{\alpha-1}} \\
\QQ_p
}
$$
Note that $n = [M: \QQ_p]$, so
$M$ embeds in $D$ as a maximal subfield.  Fix
such an embedding.  As in Section~\ref{sec:explicit}, there is an 
associated short exact sequence
\begin{equation}\label{eq:Eext}
1 \rightarrow M^\times \xrightarrow{i} E \xrightarrow{j} \Gal(M/\QQ_p)
\rightarrow 1 
\end{equation}
where $E$ is the normalizer of $M$ in $D^\times$.
The cohomology class $[E]$ of the extension (\ref{eq:Eext}) corresponds to the
Brauer group class of $D$ under the map
$$ H^2(M/\QQ_p)
\rightarrow \Br(\QQ_p) $$
and hence we have
$$ \Inv([E]) = \frac{1}{(p-1)p^{\alpha-1}m} \in \QQ/\ZZ. $$
Since we have $[L:\QQ_p] = p^{\alpha-1}m$, we deduce
that the image $[E']$ of $[E]$ under the map
$$ H^2(M/\QQ_p) \rightarrow
H^2(M/L) $$
has invariant
$$ \Inv([E']) = \frac{1}{p-1}. $$
The cohomology class $[E']$ classifies the extension $E'$ given by the
pullback:
$$
\xymatrix{
1 \ar[r] & 
M^\times \ar[r]^{i'} \ar@{=}[d] &
E' \ar[r]^-{j'} \ar[d] & 
\Gal(M/L) \ar[r] \ar@{^{(}->}[d] &
1 
\\
1 \ar[r] &
M^\times \ar[r]^i &
E \ar[r]^-{j} &
\Gal(M/\QQ_p) \ar[r] & 
1
}
$$
Let $\omega \in \QQ_{p^m} \subset L$ be a primitive $(p^m-1)$st root of unity.

\begin{lem}
The image of $\omega$ under the local Artin map
$$ \Art_L : L^\times \rightarrow
\Gal(M/L) = C_{p-1} $$
is a generator.
\end{lem}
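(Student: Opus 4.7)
The plan is to compute $\Art_L(\omega)$ by descending through the tower $M/L/\QQ_{p^m}/\QQ_p$ via norm-functoriality of the local Artin map, reducing to the explicit cyclotomic reciprocity law over $\QQ_p$.

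Since $\omega \in \QQ_{p^m}^\times \subseteq L^\times$, norm-functoriality applied to the whole tower identifies $\Art_L(\omega)$, viewed inside $\Gal(M/L) \subseteq \Gal(M/\QQ_p)$, with $\Art_{\QQ_p}(N_{L/\QQ_p}(\omega))|_M$. Because $\omega$ lies in the base and the Frobenius of $\QQ_{p^m}/\QQ_p$ acts by $\omega \mapsto \omega^p$, the norm collapses to
\[
N_{L/\QQ_p}(\omega) = N_{\QQ_{p^m}/\QQ_p}(\omega^{p^{\alpha-1}}) = \omega^{p^{\alpha-1}(p^m-1)/(p-1)}.
\]
I would then write this as $\eta^{p^{\alpha-1}}$, where $\eta := \omega^{(p^m-1)/(p-1)}$ is a primitive $(p-1)$-st root of unity in $\ZZ_p^\times$. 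Since $\gcd(p-1, p^{\alpha-1}) = 1$, the unit $\eta^{p^{\alpha-1}}$ still has order exactly $p-1$.

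To finish, the cyclotomic reciprocity law identifies $\Art_{\QQ_p}\colon \ZZ_p^\times \to \Gal(\QQ_p(\zeta_{p^\alpha})/\QQ_p) = (\ZZ/p^\alpha)^\times$ with the reduction map (up to the conventional inversion), so a unit of order $p-1$ in $\ZZ_p^\times$ maps to an element of order $p-1$ in $(\ZZ/p^\alpha)^\times$, necessarily a generator of the Teichm\"uller $C_{p-1}$ subgroup. Under the decomposition $\Gal(M/\QQ_p) = \Gal(\QQ_{p^m}/\QQ_p) \times \Gal(\QQ_p(\zeta_{p^\alpha})/\QQ_p)$, valid because the two extensions are linearly disjoint (one unramified, the other totally ramified), the subgroup $\Gal(M/L)$ is precisely this Teichm\"uller $\{1\} \times C_{p-1}$, so $\Art_L(\omega)$ is indeed a generator.

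The main difficulty is purely organizational: tracking which $C_{p-1}$ inside $(\ZZ/p^\alpha)^\times$ is singled out by $L$ and confirming it matches the one cyclotomic reciprocity selects. For odd $p$ the group $(\ZZ/p^\alpha)^\times$ has a \emph{unique} subgroup of order $p-1$, so no real ambiguity arises, and the defining description $L = \QQ_{p^m} \cdot \QQ_p[p^{\alpha-1}]$ makes manifest that $\Gal(M/L)$ equals this unique $C_{p-1}$.
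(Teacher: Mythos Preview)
Your proof is correct and follows essentially the same approach as the paper's: both reduce to $\Art_{\QQ_p}$ via norm-functoriality down the tower $L/\QQ_{p^m}/\QQ_p$, compute $N_{L/\QQ_p}(\omega) = \omega^{p^{\alpha-1}(p^m-1)/(p-1)} \in \mu_{p-1}$, and then identify the image under $\Art_{\QQ_p}$ with a generator of $\Gal(M/L)$. The only difference is that the paper packages the final step as the single assertion $\Art_{\QQ_p}(\mu_{p-1}) = \Gal(M/L) \subset \Gal(M/\QQ_p)$, whereas you unpack this via cyclotomic reciprocity and the linearly-disjoint decomposition $\Gal(M/\QQ_p) \cong C_m \times (\ZZ/p^\alpha)^\times$; your expanded version makes the paper's assertion transparent.
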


\begin{proof}
The following diagram summarizes the relationship of $\Art_L$ to
$\Art_{\QQ_p}$.
$$
\xymatrix@C+2em@R+1em{
\mu_{p^m-1} \ar@{^{(}->}[r] \ar[d]^{\cong}_{(-)^{p^{\alpha-1}}} &
L^\times \ar[r]^-{\Art_L} \ar[d]_{N_{L/\QQ_{p^m}}} &
\Gal(M/L) \ar[d] \ar@{=}[r] & 
C_{p-1} \ar@{^{(}->}[d]
\\
\mu_{p^m-1} \ar@{^{(}->}[r] \ar@{->>}[d]_{(-)^{\frac{p^m-1}{p-1}}} &
\QQ_{p^m}^\times \ar[r]^-{\Art_{\QQ_{p^m}}} \ar[d]_{N_{\QQ_{p^m}/\QQ_p}} &
\Gal(M/\QQ_{p^m}) \ar[d] \ar@{=}[r] & 
(\ZZ/p^{\alpha})^\times \ar@{^{(}->}[d]
\\
\mu_{p-1} \ar@{^{(}->}[r]  &
\QQ_{p}^\times \ar[r]^-{\Art_{\QQ_{p}}} &
\Gal(M/\QQ_{p}) \ar@{=}[r] & 
C_m \times (\ZZ/p^{\alpha})^\times 
}
$$
The lemma follows from the fact that
$$ \Art_{\QQ_p}(\mu_{p-1}) = \Gal(M/L) \subset \Gal(M/\QQ_p). $$
\end{proof}

Using Proposition~\ref{prop:Ba}, we deduce the following.

\begin{cor}
The group $E'$ contains an element $b$ such that $j'(b)$ generates
$\Gal(M/L)$, and $b^{p-1} = i'(\omega)$.
\end{cor}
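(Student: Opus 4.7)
The plan is to derive the corollary as a direct application of Proposition~\ref{prop:Ba}, taking the local field to be $L$ and the cyclic degree $p-1$ extension to be $M/L$. First I would set $\sigma := \Art_L(\omega) \in \Gal(M/L)$; by the preceding lemma this is a generator of $\Gal(M/L) \cong C_{p-1}$, so there is a unique injection $\chi: \Gal(M/L) \hookrightarrow \QQ/\ZZ$ with $\chi(\sigma) = 1/(p-1)$.

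Next I would apply Proposition~\ref{prop:Ba} with $K = L$, the given $M/L$, the chosen $\chi$, and $a = \omega \in L^\times$. This produces an extension
$$ 1 \rightarrow M^\times \xrightarrow{i_\omega} E_\omega \xrightarrow{j_\omega} \Gal(M/L) \rightarrow 1 $$
containing a lift $\td{\sigma} \in E_\omega$ of $\sigma$ with $\td{\sigma}^{\,p-1} = i_\omega(\omega)$, and whose associated Brauer class $B_{[\omega]}$ has invariant $\chi(\Art_L(\omega)) = \chi(\sigma) = 1/(p-1)$.

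I would then identify $[E_\omega]$ with $[E']$ in $H^2(M/L)$. By construction $\Inv [E'] = 1/(p-1) = \Inv [E_\omega]$, and since $M/L$ is cyclic of degree $p-1$ the invariant map $H^2(M/L) \to \QQ/\ZZ$ is injective with image $\tfrac{1}{p-1}\ZZ/\ZZ$. Hence $[E_\omega] = [E']$, so there is an isomorphism of central extensions $E_\omega \xrightarrow{\cong} E'$ which is the identity on $M^\times$ (after identifying $i_\omega$ with $i'$) and the identity on $\Gal(M/L)$. Defining $b \in E'$ to be the image of $\td{\sigma}$ under this isomorphism, one has $j'(b) = \sigma$, a generator of $\Gal(M/L)$, and $b^{p-1} = i'(\omega)$, as required.

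The only substantive step is the identification $[E_\omega] = [E']$; everything else is formal. Strictly speaking one must check that comparing cohomology classes yields an isomorphism of extensions respecting the chosen section well enough that $\td{\sigma}$ transports to an element with the stated properties, but this is automatic from the standard description of $H^2$-classes as equivalence classes of extensions of $\Gal(M/L)$ by $M^\times$.
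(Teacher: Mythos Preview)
Your proof is correct and is exactly the argument the paper has in mind: the paper presents the corollary as an immediate consequence of Proposition~\ref{prop:Ba}, and your proposal simply spells out the details of that deduction---choosing $\chi$ so that $\chi(\Art_L(\omega))=1/(p-1)$, applying the proposition with $a=\omega$, and then identifying $[E_\omega]=[E']$ via the injectivity of the invariant map on $H^2(M/L)$. There is nothing to add.
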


We deduce that there is a map of short exact sequences
$$
\xymatrix{
1 \ar[r] &
C_{p^{\alpha}(p^m-1)} \ar[r] \ar[d] &
G_\alpha \ar[r] \ar[d] &
C_{p-1} \ar[r] \ar[d]^{\cong} &
1
\\
1 \ar[r] &
M^\times \ar[r]_{i'} &
E' \ar[r]_-{j'} &
\Gal(M/L) \ar[r] &
1
}
$$
Since $E'$ is a subgroup of $E$, which in turn is a subgroup of $D^\times$,
we have given an embedding of $G_\alpha$ into $D^\times$.

\section{Negative results}\label{sec:negative}

Let $p$ be odd, and $n = (p-1)p^{\alpha-1}m$.
In this section we will make the following observation, which shows
that there are only a limited number of cases where all of $G_\alpha$
can be realized as an automorphism group of a height $n$ mod $p$ point of a
unitary Shimura stack of type $(1,n-1)$.

\begin{prop}\label{prop:negative}
Suppose that there is a choice of $F$, $GU$, and $K$ such that
$\Gamma(K)$ is isomorphic to $G_\alpha$.  Then $p \le 7$ and $m = 1$.
\end{prop}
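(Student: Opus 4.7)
The plan is to reduce the hypothesis $\Gamma(K)\cong G_\alpha$ to an embedding problem in a specific central division algebra, and then to extract numerical constraints on $(p,m)$. By Section~\ref{sec:taf}, the existence of such a $\Gamma(K)$ at a height-$n$ mod-$p$ point forces $G_\alpha$ to embed in $D^\times$, where $D$ is a central division algebra of degree $n$ over a quadratic imaginary field $F$ (in which $p$ splits as $u\bar u$), $\Inv_u D = 1/n$, and $D$ carries a positive Rosati involution $\dag$ of the second kind. Finiteness of $G_\alpha$ and positivity of $\dag$ force $g^\dag = g^{-1}$ for every $g\in G_\alpha$.

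Let $a\in G_\alpha$ be the cyclic generator of order $p^\alpha(p^m-1)$. Since $D_u$ is a central division algebra of degree $n$ over $\QQ_p$, Hewett's local classification (Section~\ref{sec:Hewett}) forces the completion $F(a)_w$ at the unique prime above $u$ to be the local maximal subfield $\QQ_{p^m}(\zeta_{p^\alpha})$, so $F(a)$ is a maximal subfield of $D$ with $[F(a):F]=n$. The $b$-conjugation in $G_\alpha$ is inner in $D$, hence trivial on the center $F$; on $F(a)$ it restricts to the automorphism $\sigma_t\colon\zeta\mapsto\zeta^t$, whose fixed field inside $F(a)$ is $F\cdot L$ with $L=\QQ[p^{\alpha-1}]\cdot\QQ(\zeta_{p^m-1})$. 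When $F\subseteq\QQ(a)$ this forces $F\subseteq L$; the requirement that $p$ split in $F$, combined with the fact that $\QQ[p^{\alpha-1}]$ is totally real and ramified at $p$, then forces $F\subseteq\QQ(\zeta_{p^m-1})$. Imposing $[F(a):F]=n$ yields the equality $\phi(p^m-1)=2m$; the complementary case $F\not\subseteq\QQ(a)$ instead gives the stronger $\phi(p^m-1)=m$. A direct enumeration for odd $p$ shows that $\phi(p^m-1)\in\{m,2m\}$ is satisfied precisely for
\[
(p,m)\in\{(3,1),(3,2),(5,1),(7,1)\}.
\]

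The principal remaining obstacle, and in my view the hardest step, is the elimination of the exceptional case $(p,m)=(3,2)$. Here $F=\QQ(\sqrt{-2})$ is the unique admissible choice (the only quadratic imaginary subfield of $\QQ(\zeta_8)$ in which $3$ splits), every elementary degree condition is satisfied, and the sub-$F$-algebra $F[G_1]\subset D$ is identified with the cyclic algebra $(\QQ(\zeta_{24})/\QQ(\zeta_8),\sigma_{17},\zeta_8)$. To rule out this case I would exploit the full nonabelian structure of $G_1$: the element $b$ has order $16$, so $F(b)=\QQ(\zeta_{16})$ is a second maximal subfield of $D$, and the positive involution $\dag$ must simultaneously restrict to complex conjugation on both $F(a)$ and $F(b)$ while sending $b$ to $b^{-1}$. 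Tracking the Hasse invariants of the restriction of $[D]\in\Br(F)$ to $\Br(\QQ(\zeta_8))$ against this simultaneous constraint, and pairing it with the positivity requirement at the unique archimedean place of $F^+=\QQ$, should produce the final incompatibility that eliminates $m>1$ and completes the argument.
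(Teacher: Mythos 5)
Your argument through the degree constraint matches the paper's proof exactly: both reduce the hypothesis to an embedding $G_\alpha\hookrightarrow D^\times$ with $D$ a degree-$n$ central division algebra over a quadratic imaginary $F$ split at $p$, take the cyclic generator $a$ of order $p^\alpha(p^m-1)$, note that $F(a)$ must be a maximal subfield of $D$ of degree $n$ over $F$, and deduce $\varphi(p^m-1)\in\{m,2m\}$. Where you part ways with the paper is in the enumeration of solutions. The paper simply asserts that ``it is easily checked'' that $\varphi(p^m-1)\in\{m,2m\}$ forces $p\in\{3,5,7\}$ and $m=1$, whereas you correctly observe that $(p,m)=(3,2)$ \emph{also} satisfies the condition (indeed $\varphi(8)=4=2m$), and that with $F=\QQ(\sqrt{-2})\subset\QQ(\zeta_{24})$ all of the elementary degree and splitting constraints appear to be met. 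This is a genuine and nontrivial observation: the printed proof, as literally stated, does not address the case $(3,2)$ at all, so its ``easily checked'' claim is, at minimum, missing an argument, and your careful reading has exposed this.

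However, your proposal does not actually close the gap. Your plan for eliminating $(3,2)$ --- tracking the Hasse invariants of $[D]$ restricted to $\Br(\QQ(\zeta_8))$ against the requirement that $\dag$ invert both $a$ and the order-$16$ element $b$, and pairing this with archimedean positivity --- is an outline, not a computation. You do not exhibit the asserted incompatibility, and it is far from clear that one exists: the involution $\dag'$ constructed via the analogue of Lemma~\ref{lem:involution} (applied with $L=\QQ(\zeta_8)$, $M=\QQ(\zeta_{24})$, $F=\QQ(\sqrt{-2})$) already satisfies $a^{\dag'}=a^{-1}$ and $b^{\dag'}=b^{-1}$, and the local invariant data $\Inv_u D=1/4$, $\Inv_{\bar u}D=-1/4$ is self-consistent with a positive involution of the second kind. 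Until you (or anyone) either produces the obstruction or verifies that $(3,2)$ genuinely yields an admissible Shimura datum, the last step of the proof of Proposition~\ref{prop:negative} remains open. In short: you have reproduced the paper's argument, improved on its case-analysis, and in doing so located a real gap in the published proof, but you have not supplied the missing argument.
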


\begin{proof}
Suppose that $G_\alpha = \Gamma(K)$ for some $F$, $GU$, and $K$.  Then
in particular $G_\alpha$ is a subgroup of the units of the algebra $D$ of
Section~\ref{sec:taf} of dimension $n^2$ over $F$.
There is an element $a \in G_\alpha$ which has order
$p^\alpha(p^m-1)$.  Therefore there is an embedding
$$ M = F(\zeta_{p^{\alpha}(p^m-1)}) \hookrightarrow D^\times. $$
Depending on $F$, we have
$$ [M:F] \in
\{(p-1)p^{\alpha-1}\varphi(p^m-1),
(p-1)p^{\alpha-1}\varphi(p^m-1)/2 \}. $$
Here, $\varphi$ is Euler's phi function.  
Since $[D:F] = n^2$, this is only possible if
$$ \varphi(p^m-1) \in \{ m, 2m \}. $$
It is easily checked, using the inequality $\phi(n) \ge \sqrt{n/2}$
and checking a few cases, that this can only happen if $p \in \{ 3,5,7\}$ and
$m = 1$.
\end{proof}

\section{Constructions of global division algebras}\label{sec:global}

Having established our negative results, we now begin the work necessary to  prove our positive results.  We wish to construct explicit global division algebras which contain Hewett's subgroups in their groups of units.  The tool we will use is the following global analog of Proposition~\ref{prop:Ba}.

\begin{prop}\label{prop:GlobalBa}
Let $K$ be a finite extension of $\QQ$, and suppose that 
$M$ is a degree $n$ cyclic extension of $K$.
Fix an injection
$$ \chi: \Gal(M/K) \hookrightarrow \QQ/\ZZ, $$
and let $\sigma \in \Gal(M/K)$ be the unique element satisfying
$$ \chi(\sigma) = 1/n. $$
Then for each element $a \in K^\times$ there is an extension
$$ 1 \rightarrow M^\times \xrightarrow{i} E_{a} \xrightarrow{j} \Gal(M/K)
\rightarrow 1
$$
such that 
\begin{enumerate}
\item there is a lift of $\sigma$ to $\td{\sigma} \in E$ such that
$\td{\sigma}^n = i(a)$, and
\item the corresponding central simple algebra $B_{[a]}$ has invariant
$$ \Inv_v B_{[a]} = \chi(\Art_{M/K}(a_v)) $$
at every place $v$ of $K$.
\end{enumerate}
\end{prop}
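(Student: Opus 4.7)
The plan is to mimic the proof of Proposition \ref{prop:Ba} essentially verbatim, and then to deduce the local invariants by a restriction-to-decomposition-groups argument. Explicitly, I would let $\td{\chi} : \Gal(M/K) \to [0,1) \cap \QQ$ be the canonical lift of $\chi$, so that $\delta(\chi) \in H^2(\Gal(M/K), \ZZ)$ is represented by the normalized $2$-cocycle $\phi(g_1, g_2) = \td{\chi}(g_2) - \td{\chi}(g_1 g_2) + \td{\chi}(g_1)$, and define $E_a = M^\times \times \Gal(M/K)$ with multiplication twisted by $\phi_a(g_1,g_2) = a^{\phi(g_1,g_2)}$. This is the cup product $a \cup \delta(\chi) \in H^2(\Gal(M/K), M^\times)$. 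The verification that $\td{\sigma} = (1, \sigma)$ satisfies $\td{\sigma}^n = i(a)$ is a formal manipulation with $\phi$, identical to the local case, which establishes (1).

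For (2), I would reduce to Proposition \ref{prop:Ba} one place at a time. Fix a place $v$ of $K$ and a place $w$ of $M$ above $v$, giving the decomposition embedding $\Gal(M_w/K_v) \hookrightarrow \Gal(M/K)$. The local invariant $\Inv_v B_{[a]}$ is by definition the invariant of the image of $[E_a]$ under the restriction map
\[ H^2(\Gal(M/K), M^\times) \to H^2(\Gal(M_w/K_v), M_w^\times). \]
This image is represented by the restriction of the cocycle $\phi_a$, which is precisely the cocycle built in the proof of Proposition \ref{prop:Ba} from the local data $(M_w/K_v,\ \chi|_{\Gal(M_w/K_v)},\ a_v)$. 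When $v$ splits completely in $M$, or more generally when $\chi$ restricts trivially to the decomposition group, the restricted cocycle is a coboundary and the local invariant is zero, consistent with both sides of (2) vanishing. The archimedean case is handled identically using Remark \ref{rmk:Ba}. Applying Proposition \ref{prop:Ba} then yields $\Inv_v B_{[a]} = \chi\bigl(\Art_{M_w/K_v}(a_v)\bigr)$, where the local Artin image is identified with an element of $\Gal(M/K)$ via the decomposition inclusion.

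The formula $\Inv_v B_{[a]} = \chi(\Art_{M/K}(a_v))$ in the statement then follows from the standard compatibility of local and global class field theory: the local Artin image of $a_v \in K_v^\times$, transported into $\Gal(M/K)$ via the decomposition inclusion, agrees with the global Artin image $\Art_{M/K}(a_v)$ of $a_v$ regarded as the idele concentrated at $v$. The only real obstacle I anticipate is the bookkeeping required to verify that the restricted global cocycle genuinely matches the local cocycle of Proposition \ref{prop:Ba} at each place, and to handle completely-split and archimedean places uniformly; all the substantive content is already encapsulated in Proposition \ref{prop:Ba} together with the local-global compatibility of the Artin map.
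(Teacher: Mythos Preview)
Your proposal is correct and follows essentially the same route as the paper: construct $E_a$ explicitly, then compute $\Inv_v B_{[a]}$ by passing to the completion at $v$, invoking Proposition~\ref{prop:Ba} (and Remark~\ref{rmk:Ba}) for the local extension $M_w/K_v$, and closing with the local-global compatibility of the Artin map. The only cosmetic difference is that where you restrict the global cocycle to the decomposition group and identify it directly with the local cocycle of Proposition~\ref{prop:Ba}, the paper instead writes $M_v \cong \Ind_{C_m}^{C_n} M_{v'}$ and appeals to the explicit matrix-algebra presentation following Example~\ref{ex:D1n}; these are the same computation packaged via Shapiro's lemma versus its explicit unwinding.
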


\begin{proof}
The extension $E_a$ may simply be defined by the presentation
$$ E_a = M^\times\bra{\td{\sigma}}/\bra{\td{\sigma}^n = a, \td{\sigma}x =
x^\sigma \td{\sigma}, x \in M^\times}. $$
We just need to compute the local invariants of the associated global
division algebra.
For a place $v$ of $K$, we have
$$ (B_{[a]})_v = B_{[a_v]}, $$
thus we have $\Inv_v B_{[a_v]} = \Inv B_{[a_v]}$.  
Let $v'$ be a place of $M$ dividing $v$.  We make use of the compatibility of the local and global Artin maps, given
by the following diagram.
$$
\xymatrix@C+1em{
K_v^\times \ar[r]^-{\Art_{M_{v'}/K_v}} \ar@{^{(}->}[d] &
\Gal(M_{v'}/K_v) \ar@{^{(}->}[d] 
\\
\mb{I}_K \ar[r]_-{\Art_{M/K}} &
\Gal(M/K)
}
$$
Let $k$ be the number of places of $M$ dividing $v$.  Then
$$ [\Gal(M/K): \Gal(M_{v'}/K_v)] = k. $$
The element $\sigma^k$ generates $\Gal(M_{v'}/K_{v}) \cong C_m$, where $m =
n/k$.  We
have $\chi(\sigma^{k}) = 1/m$.  Since we have
$$ M_v \cong \Ind_{C_{m}}^{C_n} M_{v'} \cong M_{v'}^k, $$
we may apply Proposition~\ref{prop:Ba} (Remark~\ref{rmk:Ba} if $v$ is
archimedean) together with the discussion following Example~\ref{ex:D1n} to
deduce that
$$ \Inv B_{[a_v]} = \chi(\Art_{M_{v'}/K_v}(a_v)) =
\chi(\Art_{M/K}(a)). $$
\end{proof}

Fix an odd prime $p$.  Let $n = (p-1)p^{\alpha-1}m$.
If $p = 3$ we will assume that $m > 1$.  The case of $p = 3$ and $m = 1$
will be treated separately.
In this section we show that there exist some natural choices of
global division algebras $D$ with center $F$, a global CM field, 
in which $G_\alpha$ embeds, such that there
exists a place $y\vert p$ of $F$ with $\Inv_y D = 1/n$.

Let $\omega$ be a primitive $(p^m-1)$st root of unity.  
Consider the tower of field extensions
$$
\xymatrix@C-1em{
\QQ(\omega) \ar@{-}[d]_{C_{m}}  \ar@{-} 	`/0pt[r]
					`/0pt[rdd]^{(\ZZ/(p^m-1))^\times}
					[dd]
& 
\\
F \ar@{-}[d]
\\
\QQ & 
}
$$
Here the Galois group $\Gal(\QQ(\omega),\QQ)$ is identified by
the isomorphism
\begin{align*} 
(\ZZ/(p^m-1))^\times & \cong \Gal(\QQ(\omega)/\QQ), \\
s & \mapsto ([s]: \omega \mapsto \omega^s),
\end{align*}
the
subgroup $C_m \le (\ZZ/(p^m-1))^\times$ is the subgroup generated
by the element $p$ mod $p^m-1$, and the field $F$ is the subfield of 
$\QQ(\omega)$ fixed by $C_m$.  The key property of $F$ is that $p$
splits completely in it.  The field $F$ is a CM field.  This is
because $\QQ(\omega)$ is a CM field, with conjugation given by the 
element $[-1] \in (\ZZ/(p^m-1))^\times$,
and for $p > 3$, or $p = 3$ and $m > 1$, the conjugation element $[-1]$ is not
contained in the subgroup $C_m$ generated by $p$.  We will write
$$ [-1](x) = \bar{x} $$
for $x \in \QQ(\omega)$.

Let $F^+$ be the totally real subfield of $F$ given by taking $[-1]$-fixed
points.  Let $\{x_i \}$ be the set of primes of $F^+$ dividing $p$.  Each
prime $x_i$ splits as $y_i \bar{y}_i$ in $F$.  The extension 
$\QQ(\omega)/F$ is totally unramified at the primes $y_i$ and
$\bar{y}_i$.  
Let $(t_i)$ be a finite sequence of integers coprime to
$p^m-1$, with $t_1 = 1$, 
such that the images
$[t_i]$
in $(\ZZ/(p^m-1))^\times \cong \Gal(\QQ(\omega)/\QQ)$ satisfy
$$ [t_i](y_1) = y_i. $$

\begin{thm}\label{thm:globalembed}
Let $D$ be a central
division algebra over $F$ whose non-trivial local invariants satisfy:
\begin{align*}
p^{\alpha-1} m \Inv_{y_i} D & \cong t_i/(p-1) \in \QQ/\ZZ, \\
p^{\alpha-1} m \Inv_{\bar{y}_i} D & \cong -t_i/(p-1) \in \QQ/\ZZ.
\end{align*}
Then the group $G_\alpha$ embeds in $D^\times$ as a 
maximal finite subgroup.
\end{thm}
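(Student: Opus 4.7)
The plan is to globalize the local construction of Section~\ref{sec:Hewett} using Proposition~\ref{prop:GlobalBa}.  Set $M = F(\zeta_{p^\alpha(p^m-1)}) = \QQ(\zeta_{p^\alpha(p^m-1)})$; coprimality of $p^\alpha$ with $p^m-1$ gives $[M:F] = n$ and $\Gal(M/F) \cong C_m \times (\ZZ/p^\alpha)^\times$.  By the Chinese remainder theorem choose an integer $t'$ with $t' \equiv 1 \pmod{p^m-1}$ and $t' \equiv t \pmod{p^\alpha}$, let $\sigma \in \Gal(M/F)$ act as $\zeta \mapsto \zeta^{t'}$ with $\zeta$ chosen so that $\zeta^{p^\alpha} = \omega$, and set $L = M^{\langle \sigma\rangle}$.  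Then $M/L$ is cyclic of degree $p-1$ and $[L:F] = p^{\alpha-1}m$.

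The first step is to embed $M$ as a maximal subfield of $D$.  Since $p$ splits completely in $F$, the decomposition group at each $y_i \mid p$ is all of $\Gal(M/F)$, so there is a unique prime $w_i$ of $M$ over $y_i$ with $M_{w_i} \cong \QQ_{p^m}(\zeta_{p^\alpha})$ of local degree $n$ over $F_{y_i} = \QQ_p$; similarly at $\bar y_i$.  Since $\Inv_v D = 0$ elsewhere, $M$ splits $D$, and having the right degree embeds as a maximal subfield.  Fix such an embedding.

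Next, the centralizer $C_D(L)$ is central simple over $L$ of dimension $(p-1)^2$, with Brauer class equal to the restriction of $[D]$ to $L$.  Proposition~\ref{prop:GlobalBa} applied to $M/L$ and $a = \omega$ produces a central simple $L$-algebra $B_{[\omega]}$ of the same dimension; the heart of the argument is to show these agree in $\Br(L)$ by matching local invariants.  At the unique prime $z_i$ of $L$ over $y_i$ the hypothesis gives $\Inv_{z_i}(D|_L) = [L_{z_i}:F_{y_i}]\cdot \Inv_{y_i} D \equiv t_i/(p-1) \pmod{\ZZ}$, while $\Inv_{z_i} B_{[\omega]} = \chi(\Art_{M_{w_i}/L_{z_i}}(\omega))$ by Proposition~\ref{prop:GlobalBa}.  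Using the Artin reciprocity diagram from Section~\ref{sec:Hewett}, the base case $i=1$ gives $\chi(\Art(\omega)) = 1/(p-1)$, and for general $i$ the image of $\omega$ at $z_i$ is $\omega_{(1)}^{t_i}$ relative to the base (via the action of $[t_i] \in \Gal(\QQ(\omega)/\QQ)$ on primes above $p$), yielding $t_i/(p-1)$.  At $\bar z_i$ complex conjugation sends $\omega$ to $\omega_{(1)}^{-t_i}$, giving $-t_i/(p-1)$, and away from $p$ the extension $M/L$ is unramified and $\omega$ is a unit, so the invariants vanish.  Hasse--Brauer--Noether then yields $C_D(L) \cong B_{[\omega]}$.

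Transporting the canonical generator $\td\sigma$ of $B_{[\omega]}$ into $C_D(L) \subset D^\times$ produces $b$ with $b x b^{-1} = x^\sigma$ for $x \in M$ and $b^{p-1} = \omega$.  Setting $a := \zeta$, the pair $(a,b)$ satisfies the defining relations of $G_\alpha$ --- the relation $bab^{-1} = a^{t'} = a^t$ holds after reducing the exponent modulo the order of $a$, and $b^{p-1} = \omega = a^{p^\alpha}$ --- giving $G_\alpha \hookrightarrow D^\times$ (injectivity is clear from projection to $\Gal(M/F)$ together with the exact order of $\zeta$).  For maximality, $t_1 = 1$ forces $\Inv_{y_1} D = 1/n$, so $D_{y_1}$ is the local division algebra of invariant $1/n$ and $G_\alpha$ is already a maximal finite subgroup of $D_{y_1}^\times = \MS_n$ by Hewett's theorem; hence it is maximal in $D^\times$.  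The main obstacle is the local invariant computation, requiring careful bookkeeping of the localizations of $\omega$ at each $z_i$ and $\bar z_i$ and relating the resulting exponents to the $t_i$ via the $\Gal(\QQ(\omega)/\QQ)$-action on primes above $p$.
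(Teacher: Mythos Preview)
Your proposal follows the same route as the paper: embed $M$ as a maximal subfield of $D$, restrict the Brauer class to $L$ (equivalently, pass to the centralizer $C_D(L)$), match local invariants with those of the cyclic algebra $B_{[\omega]}$ built from Proposition~\ref{prop:GlobalBa}, and transport the canonical generator to obtain $b$ with $b^{p-1}=\omega$. The maximality argument via the completion at $y_1$ is also the same.

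There is one point where your organization creates a small gap. You fix $\sigma$ in advance as the automorphism $\zeta\mapsto\zeta^{t'}$ and then assert that the base case gives $\chi(\Art_{M/L}(\omega_{z_1}))=1/(p-1)$; this amounts to claiming $\Art_{M/L}(\omega_{z_1})$ equals \emph{your} $\sigma$. The local diagram you cite from Section~\ref{sec:Hewett} only shows that $\Art_{M/L}(\omega_{z_1})$ is \emph{some} generator of $C_{p-1}$, not the specific one you named. The paper avoids this by \emph{defining} $\sigma:=\Art_{M/L}(\omega_{y_1})$ and letting the exponent in the presentation of $G_\alpha$ be whatever this $\sigma$ dictates (harmless, since the isomorphism type of $G_\alpha$ depends only on the image of $t$ in $(\ZZ/p^\alpha)^\times$). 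You should reorganize in the same way. Relatedly, the paper isolates the identity $\Art_{M/L}(\omega_{y_i})=\sigma^{t_i}$ as a separate lemma (Lemma~\ref{lem:artyi}) and proves it by pushing norms down the tower $L/\QQ(\omega)/F/\QQ$; your one-line justification via the $\Gal(\QQ(\omega)/\QQ)$-action on primes is the right idea but, as written, does not pin down whether the exponent is $t_i$ or $t_i^{-1}$. Your explicit check that the invariants of $B_{[\omega]}$ vanish away from $p$ (unramified extension, $\omega$ a unit) is a detail the paper leaves implicit.
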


In order to prove Theorem~\ref{thm:globalembed}, we will need
to introduce some additional field extensions.
Let $\zeta$ be a primitive $(p^\alpha)$th
root of unity, and consider the following field diagram.
$$
\xymatrix@C-3em@R-1em{
M = \QQ(\omega, \zeta) {\phantom{M}} \ar@{-}[dr]^{C_{p-1}}
\ar@{-}[dd]_{(\ZZ/p^\alpha)^\times} 
\\
& {\phantom{L =}} \QQ(\omega)[p^{\alpha-1}] = L
\ar@{-}[dl]^{\ZZ/p^{\alpha-1}} 
\\
\QQ(\omega) \ar@{-}[d]_{C_m} \\
F \ar@{-}[d]_{2} \\
F^+ \ar@{-}[d] \\
\QQ
}
$$
The extension $M/\QQ(\omega)$ is totally ramified over each of
the primes $y_i$ and $\bar{y}_i$.
Let $\omega_{y_i}$ denote the image of $\omega$ under the
inclusion
$$ L^\times \hookrightarrow L^\times_{y_i} 
\hookrightarrow \mb{I}_{L}.
$$
Define
$$ \sigma = \Art_{M/L}(\omega_{y_1}) \in \Gal(M/L) \cong
C_{p-1}. $$

We shall need the following lemma.

\begin{lem}\label{lem:artyi}
We have
$$ \Art_{M/L}(\omega_{y_i}) = \sigma^{t_i}. $$
The element $\sigma$ (and hence each of the elements
$\sigma^{t_i}$) is a
generator of $\Gal(M/L)$.
\end{lem}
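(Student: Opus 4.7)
The plan is to use the local--global compatibility of Artin reciprocity to reduce the computation at each $y_i$ to a purely local question, already treated in Section~\ref{sec:Hewett}, and then to transfer the local answer across the primes $y_i$ via Galois equivariance. The first step is to observe that $M/\QQ(\omega)$ is a $p$-power cyclotomic extension, hence totally ramified at every prime of $\QQ(\omega)$ above $p$; consequently $M/L$ is totally ramified at each $y_i$, and the decomposition group of $y_i$ in $\Gal(M/L)$ is all of $C_{p-1}$. By local--global compatibility,
$$\Art_{M/L}(\omega_{y_i}) = \Art_{M_{y_i}/L_{y_i}}(\omega),$$
where on the right $\omega$ denotes its image under the localization $L^\times \hookrightarrow L_{y_i}^\times$.

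For the second step, because $[\QQ(\omega):\QQ]$ and $[\QQ(\zeta):\QQ] = (p-1)p^{\alpha-1}$ are coprime, $\QQ(\omega)$ and $\QQ(\zeta)$ are linearly disjoint over $\QQ$, so $\Gal(M/\QQ) \cong \Gal(\QQ(\omega)/\QQ) \times \Gal(\QQ(\zeta)/\QQ)$. In particular, $[t_i]$ lifts uniquely to an element $\td{t_i} \in \Gal(M/\QQ(\zeta))$, and this automorphism carries the prime $y_1$ to $y_i$, inducing a compatible isomorphism $L_{y_1} \xrightarrow{\cong} L_{y_i}$. Since $\Gal(M/\QQ)$ is abelian, conjugation by $\td{t_i}$ acts trivially on the subgroup $\Gal(M/L)$. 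Naturality of the Artin map under $\td{t_i}$, together with $\td{t_i}(\omega) = \omega^{t_i}$ and multiplicativity of $\Art$, yields the desired identity $\Art_{M/L}(\omega_{y_i}) = \sigma^{t_i}$ after carefully unwinding conventions.

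Finally, the local computation of Section~\ref{sec:Hewett} --- carried out via the diagram of Artin maps for the tower $\QQ_p \subset \QQ_{p^m} \subset L_{y_1} \subset M_{y_1}$ --- establishes that $\sigma = \Art_{M/L}(\omega_{y_1})$ is a generator of $\Gal(M/L) \cong C_{p-1}$. Since $t_i$ is coprime to $p^m-1$ and $p-1$ divides $p^m-1$, $t_i$ is coprime to $p-1$, so each $\sigma^{t_i}$ is a generator as well. The main obstacle I anticipate is careful bookkeeping of conventions --- how the Galois action on primes of $\QQ(\omega)$, on the embeddings into $\bar{\QQ}_p$ selecting those primes, and on ideles of $L$ all interact --- to ensure the exponent $t_i$ comes out with the sign claimed in the lemma rather than its inverse modulo $p-1$.
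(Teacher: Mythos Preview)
Your approach is valid and follows a genuinely different route from the paper's. The paper argues by pushing everything down to $\QQ$ via norms: using the compatibility of Artin maps with the norm maps in the tower $L \supset \QQ(\omega) \supset F \supset \QQ$, it computes $N_{L/\QQ}(\omega_{y_i})$ explicitly and shows it equals $N_{L/\QQ}(\omega_{y_1})^{t_i}$, the exponent $t_i$ appearing at the last step from the comparison of the embeddings $F \hookrightarrow \QQ_p$ attached to $y_1$ and to $y_i$. You instead transport horizontally: the lift $\td{t_i} \in \Gal(M/\QQ(\zeta))$ stabilizes $L$ setwise, so it acts on $\mb{I}_L$ and, by abelianness of $\Gal(M/\QQ)$, trivially by conjugation on $\Gal(M/L)$; equivariance of the Artin map together with $\td{t_i}\cdot\omega_{y_1} = (\omega^{t_i})_{y_i}$ then gives the identity. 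Your argument is more conceptual and bypasses the explicit norm computations; the paper's is more hands-on. Both routes ultimately rely on the local computation already done in Section~\ref{sec:Hewett} (the paper reproduces essentially the same diagram globally) for the claim that $\sigma$ generates $\Gal(M/L)$. The convention worry you flag---whether the exponent comes out as $t_i$ or $t_i^{-1}$---is present in both arguments and is not specific to yours: it comes down to whether ``$[t_i](y_1)=y_i$'' is read as the natural action on prime ideals or via the bijection $\Gal(F/\QQ)\cong\mr{Emb}(F,\QQ_p)$, $\sigma\mapsto\iota_{y_1}\circ\sigma$, which differ by an inverse. The paper resolves this by invoking the latter reading in its final norm step; your equivariance computation must simply be matched to the same convention, and then the exponent agrees.
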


\begin{proof}
Consider the following diagram.
$$
\xymatrix{
\mb{I}_L \ar[rr]^{\Art_{M/L}} \ar[d]_{N_{L/\QQ(\omega)}} &&
\Gal(M/L) \ar[d] \ar@{=}[r] &
C_{p-1} \ar@{^{(}->}[d] &
\\
\mb{I}_{\QQ(\omega)} \ar[rr]^{\Art_{M/\QQ(\omega)}}
\ar[d]_{N_{\QQ(\omega)/F}} &&
\Gal(M/\QQ(\omega)) \ar@{=}[r] \ar[d] &
(\ZZ/p^\alpha)^\times \ar@{^{(}->}[d]
\\
\mb{I}_{F} \ar[d]_{N_{F/\QQ}} \ar[rr]^{\Art_{M/F}} &&
\Gal(M/F) \ar[d] \ar@{=}[r] &
(\ZZ/p^\alpha)^\times \times C_m \ar@{^{(}->}[d] 
\\
\mb{I}_{\QQ} \ar[rr]_{\Art_{M/\QQ}} &&
\Gal(M/\QQ) \ar@{=}[r] &
(\ZZ/p^\alpha)^\times \times (\ZZ/p^m-1)^\times
}
$$
Viewing $\sigma = \Art_{M/L} (\omega_{y_1})$ as an element of $\Gal(M/\QQ)$
under the inclusion
$$ \Gal(M/L) \hookrightarrow \Gal(M/\QQ), $$
it suffices to show that
$$ \Art_{M/\QQ} (N_{L/\QQ}(\omega_{y_i})) = \sigma^{t_i}. $$
Note that by the definition of $\sigma$, we have
$$ \Art_{M/\QQ} (N_{L/\QQ}(\omega_{y_1})) = \sigma. $$
We compute:
\begin{align*}
N_{L/\QQ}(\omega_{y_i}) 
& = N_{F/\QQ} N_{\QQ(\omega)/F} N_{L/\QQ(\omega)} (\omega_{y_i}) \\
& = N_{F/\QQ} N_{\QQ(\omega)/F} (\omega_{y_i}^{p^{\alpha-1}}) \\
& = N_{F/\QQ} \left( \omega_{y_i}^{\frac{p^{m-1}-1}{p-1}p^{\alpha-1}}
\right) \\
& = N_{F/\QQ}(\omega_{y_1})^{\frac{p^{m-1}-1}{p-1}p^{\alpha-1}t_i}.
\end{align*}
Here, the last norm computation comes from the fact that under the
composite
$$ \Gal(\QQ(\omega)/\QQ) \twoheadrightarrow \Gal(F/\QQ) \xrightarrow{\cong}
\mr{Emb}(F, \QQ_p) $$
the element $[t_i]$ maps $\omega$ to $\omega^{t_i}$.
We have shown that
$$ N_{L/\QQ}(\omega_{y_i}) = N_{L/\QQ}(\omega_{y_1})^{t_i}. $$
Now apply $\Art_{M/\QQ}$ to the equation above.
\end{proof}

\begin{proof}[Proof of Theorem~\ref{thm:globalembed}]
Since $G_\alpha$ is maximal in $D^\times_{y_1}$, if it embeds in
$D^\times$, it must be maximal in $D^\times$.  
We therefore just must show it embeds.

Since each prime $y_i$ does not split in $M$,
we have $[L_{y_i}:F_{y_i}] = p^{\alpha-1}m$,  and $[M_{y_i}:F_{y_i}] =
(p-1)p^{\alpha-1}m$, which implies that
we have
\begin{align*}
\Inv_{y_i} L \otimes_F D & = t_i/(p-1), \\
\Inv_{\bar{y}_i} L \otimes_F D & = -t_i/(p-1), \\
\Inv_{y_i} L \otimes_F D & = 0, \\
\Inv_{\bar{y}_i} L \otimes_F D & = 0.
\end{align*}
In particular, $M$ splits $D$, and therefore $M$ embeds into $D$ as a
maximal commutative subfield
\cite[Thm.~9.18]{Swan}.  
Since $M$ embeds into $D$, the element $[D] \in \Br(F)$ is in the image of
the map
$$ H^2(M/F) \hookrightarrow \Br(F), $$
and is therefore corresponds to an extension
$$ 1 \rightarrow M^\times \rightarrow E \rightarrow (\ZZ/p^\alpha)^\times
\times C_{p^m-1} \rightarrow 1. $$
We let $[E]$ denote the corresponding cohomology class in $H^2(M/F)$.
Using the diagram
$$
\xymatrix{
H^2(M/F) \ar@{^{(}->}[r] \ar[d]_{\Res} &
\Br(F) \ar[d]
\\
H^2(M/L) \ar@{^{(}->}[r] &
\Br(L)
}
$$
together with with the local invariants of $L \otimes_F D$ computed above,
we deduce that the cohomology class
$$ \Res [E] \in H^2(M/L) $$
has 
\begin{align*}
\Inv_{y_i} [E] & = t_i/(p-1), \\
\Inv_{\bar{y}_i} [E] & = -t_i/(p-1).
\end{align*}
The class $\Res [E]$ is represented by the pullback $E'$ in the following
diagram.
$$
\xymatrix{
1 \ar[r] & 
M^\times \ar[r]^{i'} \ar@{=}[d] &
E' \ar[r]^{j'} \ar[d] &
C_{p-1} \ar[r] \ar@{^{(}->}[d] &
1
\\
1 \ar[r] & 
M^\times \ar[r]_{i} &
E \ar[r]_-{j} &
(\ZZ/p^\alpha)^\times \times C_{p^m-1} \ar[r] &
1
}
$$
Let 
$$ \sigma = \Art_{M/L}(\omega_{y_1}) \in \Gal(M/L) \cong C_{p-1} $$
be the generator considered earlier.  Choose 
$$ \chi: \Gal(M/L) \hookrightarrow \QQ/\ZZ $$
so that $\chi(\sigma) = 1/(p-1)$.
We compute (using Lemma~\ref{lem:artyi})
$$
\chi(\Art_{L}(\omega_{y_i})) = t_i/(p-1).
$$
By Proposition~\ref{prop:GlobalBa}, the extension $E'$ contains an element
$T$ so that $j'(T) = \sigma$ and $T^{p-1} = i'(\omega)$.  We deduce that there
is a map of short exact sequences
$$
\xymatrix{
1 \ar[r] & 
C_{p^\alpha(p^m-1)} \ar[r] \ar[d] &  
G_\alpha \ar[r] \ar[d] & 
C_{p-1} \ar[r] \ar@{=}[d] & 
1
\\
1 \ar[r] & 
M^\times \ar[r] & 
E' \ar[r] & 
C_{p-1} \ar[r] & 
1
}
$$
which maps $a$ to $\omega\zeta$ and $b$ to $T$.
Since $E'$ is contained in $E$, which in turn is a subgroup of $D^\times$,
we have embedded $G_\alpha$ in $D^\times$.
\end{proof}

\begin{rmk}
If $p = 3$ and $m = 1$, then the proof of Theorem~\ref{thm:globalembed} goes
through verbatim with the following modifications.  Take $F$ to be
\emph{any} quadratic imaginary extension in which the prime $3$ splits.
Replace all occurrences of $\QQ(\omega)$ with $F$, and replace the element 
$\omega$ with $-1$.  In this case, $L = F[3^{\alpha-1}]$, and $M =
F(\zeta_{3^{\alpha}})$.
\end{rmk}

\section{Involutions}\label{sec:involutions}

Let $n = (p-1)p^\alpha m$, and let $F$, $L$, $M$, $\omega$, $\zeta$, $\sigma$, 
and $D$ be as in Section~\ref{sec:global}.  Let $m \mapsto \bar{m}$ denote
the conjugation on the CM field $M$.  This conjugation 
automorphism is characterized by
\begin{align*}
\bar{\omega} & = \omega^{-1}, \\
\bar{\zeta} & = \zeta^{-1}.
\end{align*}
Let $D'$ be the division algebra associated to the extension
$$ 1 \rightarrow M^\times \rightarrow E' \rightarrow C_{p-1} \rightarrow 1
$$
where $E' = E_\omega$, in the notation of Proposition~\ref{prop:GlobalBa}.
The algebra $D'$ is the subalgebra of $D$ which centralizes the subfield
$L$ under the fixed embedding $M \subseteq D$.
Explicitly, $D'$ admits the presentation
\begin{equation}\label{eq:D'}
D' = M\bra{T}/(T^{p-1} = \omega, Tx = x^\sigma T, x \in M).
\end{equation}
We will express elements of $D'$ as
$$ \sum_{i = 0}^{p-2} x_i T^i $$
for $x_i \in M$.

In this section we will show that $D'$ admits a positive involution of the
second kind so that the subgroup $G_\alpha$ is contained in the unitary
group associated to the involution.  The involution we are interested in is
given by the following lemma.

\begin{lem}\label{lem:involution}
There is a unique involution $\dag'$ on $D'$ satisfying:
\begin{enumerate}
\item $x^{\dag'} = \bar{x}, \qquad x \in M$,
\item $T^{\dag'} = T^{-1} = \omega^{-1}T^{p-2}$.
\end{enumerate}
\end{lem}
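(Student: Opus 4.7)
The plan is to establish uniqueness from the generation of $D'$, then verify existence by checking the proposed rule on the two defining relations of the presentation (\ref{eq:D'}), and finally verify the involution property on generators.

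For uniqueness, I would note that $D'$ is generated as a ring by $M$ together with $T$, so an anti-homomorphism is determined by its values on these generators; the two conditions in the lemma pin down such values uniquely.

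For existence, I would define $\dag'$ on the tensor algebra of $M$ and $T$ (with multiplication reversed) by the formulas in the statement, and then check it descends to $D'$ by verifying compatibility with both defining relations. The relation $T^{p-1} = \omega$ forces the identity $(T^{\dag'})^{p-1} = \bar{\omega}$, that is, $T^{-(p-1)} = \omega^{-1}$, which holds because $\bar{\omega} = \omega^{-1}$. The commutation relation $Tx = x^\sigma T$ becomes, after applying $\dag'$, the requirement $\bar{x}\, T^{-1} = T^{-1}\, \overline{x^\sigma}$, equivalently $T\bar{x}\, T^{-1} = \overline{x^\sigma}$, equivalently $(\bar{x})^\sigma = \overline{x^\sigma}$. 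This last identity says precisely that $\sigma \in \Gal(M/L)$ commutes with complex conjugation in $\Gal(M/M^+)$ as automorphisms of $M$.

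The key step, and the only one with content, is this commutation. I would verify it by observing that $\Gal(M/\QQ)$ is abelian: indeed, $M = \QQ(\omega,\zeta)$ so $\Gal(M/\QQ) \cong (\ZZ/p^\alpha)^\times \times (\ZZ/(p^m-1))^\times$, with conjugation corresponding to the element $(-1,-1)$. Thus $\sigma$ and conjugation commute as soon as they are both viewed inside this abelian group, which handles the hypothesis.

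Finally, I would check $(\dag')^2 = \mathrm{id}$ on the generators: for $x \in M$, $(x^{\dag'})^{\dag'} = \overline{\bar{x}} = x$; and since $\dag'$ is an anti-homomorphism sending $T$ to $T^{-1}$, applying it twice sends $T$ to $(T^{-1})^{\dag'} = (T^{\dag'})^{-1} = T$. Since $\dag'$ is an anti-homomorphism fixing a generating set under squaring, it is an involution on all of $D'$. The hardest (and really the only nontrivial) point in the whole argument is the commutation of $\sigma$ with conjugation, which is immediate once one records the abelian Galois structure of $M/\QQ$.
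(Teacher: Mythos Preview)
Your proposal is correct and follows essentially the same approach as the paper: both verify that the proposed anti-homomorphism is compatible with the two defining relations of the presentation (\ref{eq:D'}). You are somewhat more explicit than the paper in spelling out uniqueness, the involution identity $(\dag')^2 = \mathrm{id}$, and the reason $\sigma$ commutes with complex conjugation (abelianness of $\Gal(M/\QQ)$), all of which the paper leaves implicit.
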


\begin{proof}
We just need to check that the relations in the presentation (\ref{eq:D'}) 
are compatible with ${\dag'}$.  We
check
$$ (Tx)^{\dag'} = (x^\sigma T)^{\dag'} = T^{\dag'} (x^{\sigma})^{\dag'} = \omega^{-1}
T^{p-2} \bar{x}^{\sigma} = \bar{x} \omega^{-1} T^{p-2} = x^{\dag'}
T^{\dag'} $$
and 
$$
(T^{p-1})^{\dag'} = (\omega^{-1} T^{p-2})^{p-1} = \omega^{p-2} = \bar{\omega}
= \omega^{\dag'}.
$$
\end{proof}

By definition, the involution $\dag'$ is an involution of the second kind.

\begin{lem}
The involution $\dag'$ is positive.
\end{lem}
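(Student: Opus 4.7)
\emph{Proof plan.} My plan is to compute $\Tr_{D'/\QQ}(xx^{\dag'})$ directly in the presentation (\ref{eq:D'}) and show it is a positive multiple of a sum of squared absolute values over $M$, so that it is strictly positive for every nonzero $x \in D'$.

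First I would write an arbitrary element as $x = \sum_{i=0}^{p-2} x_i T^i$ with $x_i \in M$. Lemma~\ref{lem:involution} then gives $x^{\dag'} = \sum_j T^{-j}\br{x_j}$, and iterating the commutation relation $T^k y = y^{\sigma^k} T^k$ yields
\[
xx^{\dag'} = \sum_{i,j=0}^{p-2} x_i\, \br{x_j}^{\sigma^{i-j}}\, T^{i-j}.
\]
The standard formula for the reduced trace in a cyclic algebra $(M/L,\sigma,\omega)$ sends $\sum_k y_k T^k$ to $\Tr_{M/L}(y_0)$, so after reducing via $T^{p-1} = \omega$ only the $T^0$-part of $xx^{\dag'}$ matters. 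The key algebraic observation, and the only step I expect to require genuine care, is that the $T^0$-coefficient comes exclusively from the diagonal terms $i=j$ and equals $\sum_{i=0}^{p-2} x_i \br{x_i}$; the off-diagonal terms give contributions in $MT^k$ for $k \neq 0$, which are killed by the reduced trace.

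Using $\Tr_{D'/\QQ} = (p-1)\,\Tr_{L/\QQ}\circ \mathrm{Trd}_{D'/L}$ together with $\Tr_{L/\QQ}\circ \Tr_{M/L} = \Tr_{M/\QQ}$, I would conclude
\[
\Tr_{D'/\QQ}(xx^{\dag'}) \;=\; (p-1)\cdot \Tr_{M/\QQ}\!\Bigl(\sum_{i=0}^{p-2} x_i \br{x_i}\Bigr).
\]
To finish, I would observe that $M = \QQ(\omega,\zeta)$ is a CM field whose intrinsic conjugation $\br{\phantom{x}}$ is compatible with complex conjugation under every embedding $\tau\colon M\hookrightarrow \CC$, because $\tau$ sends the roots of unity $\omega,\zeta$ to their complex conjugates. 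Hence $\Tr_{M/\QQ}(y\br{y}) = \sum_\tau |\tau(y)|^2 \geq 0$, with equality iff $y=0$. Applying this to each $x_i$ shows the total is strictly positive unless all $x_i$ vanish, i.e., unless $x=0$, which gives the desired positivity.
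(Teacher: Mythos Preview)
Your proposal is correct and follows essentially the same route as the paper: both compute the $T^0$-coefficient of $xx^{\dag'}$ in the cyclic presentation to be $\sum_i x_i\bar{x}_i$, apply the trace formula $\mathrm{Trd}_{D'/L}\bigl(\sum_k y_k T^k\bigr)=\Tr_{M/L}(y_0)$, and then invoke the CM property of $M$ to conclude positivity. The only cosmetic differences are that the paper derives the trace formula by writing out the matrix of right multiplication rather than citing it, and phrases positivity via $\Tr_{D'/L}$ under each complex embedding of $L$ rather than passing all the way down to $\Tr_{D'/\QQ}$.
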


\begin{proof}
Let $x = \sum_{i = 0}^{p-2} x_i T^i$ be a non-zero element of $D'$.  
We must show
that $\Tr_{D/L}(x x^{\dag'})$ is a positive real number under all of the
complex embeddings of $L$.  

For a general $y = \sum y_i T^i$ in $D'$, we consider the $M$-linear
transformation:
\begin{align*}
R_y : D' & \rightarrow D', \\
z & \mapsto z \cdot y.
\end{align*}
Giving $D'$ the $M$-basis $\{T^i\}_{i = 1}^{p-2}$, we may represent $R_y$
by the following matrix.
$$
R_y = 
\begin{bmatrix}
y_0 & \omega^{-1}y_{p-2}^\sigma & \omega^{-1} y_{p-3}^{\sigma^2} & 
\ldots & \omega^{-1} y_{1}^{\sigma^{p-2}} 
\\
y_1 & y_0^\sigma & \omega^{-1} y_{p-2}^{\sigma^2} & & \vdots 
\\
y_2 & y_1^\sigma & y_0^{\sigma^2} & & \vdots 
\\
\vdots & \vdots & & \ddots & \vdots
\\
y_{p-2} & y_{p-3}^{\sigma} & \hdots & \hdots & y_0^{\sigma^{p-2}}
\end{bmatrix}
$$
In particular, we find that
$$ \Tr_{D'/L} (y) = \Tr_{M/L}(y_0). $$
We compute
\begin{align*}
xx^{\dag'} & = (x_0 + x_1 T + \cdots + x_{p-2} T^{p-2})(\bar{x}_0 +
\omega^{-1}T^{p-2}\bar{x}_1 + \cdots + \omega^{-1}T \bar{x}_{p-2}) \\
& = (x_0 \bar{x}_0 + x_1\bar{x}_1 + \cdots + x_{p-2} \bar{x}_{p-2}) +
\text{terms involving $T$.}
\end{align*}
We deduce that
$$ \Tr_{D'/L}(xx^{\dag'}) = \Tr_{M/L}(x_0 \bar{x}_0 + x_1\bar{x}_1 + \cdots +
x_{p-2} \bar{x}_{p-2}), $$
under each complex embedding of $L$, is a sum of positive real numbers,
hence positive.
\end{proof}

For a central simple algebra $B$ over a CM field $K$ with involution $*$ of
the second kind, we shall denote the associated unitary and unitary
similitude groups $U_{(B,*)}$ and $GU_{(B,*)}$.  These algebraic groups
(over $\QQ$) are given explicitly by
\begin{gather*}
U_{(B,*)}(\QQ) = \{ (B \times_\QQ R)^\times \: : \: 
x^*x = 1 \}, \\
GU_{(B,*)}(\QQ) = \{ (B \times_\QQ R)^\times \: : \: 
x^*x = R^\times \}, \\
\end{gather*}
for every $\QQ$-algebra $R$.

\begin{lem}\label{lem:unitary}
For each element $g \in G_\alpha \subset (D')^\times$, we have
$$ g \in U_{(D', \dag')}(\QQ). $$
\end{lem}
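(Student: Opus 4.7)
The plan is to exploit two observations: first, that the unitary group $U_{(D',\dag')}(\QQ)$ is by definition closed under multiplication and inversion, and second, that the subgroup $G_\alpha \subset (D')^\times$ constructed in the proof of Theorem~\ref{thm:globalembed} has the explicit generators $a \mapsto \omega\zeta \in M^\times$ and $b \mapsto T \in E'$. Hence it suffices to verify the unitary condition $g^{\dag'}g = 1$ on these two generators.

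For the generator $a = \omega\zeta$: since $\omega\zeta$ lies in the commutative subfield $M$, we compute
\[
a^{\dag'} a \;=\; \overline{\omega\zeta}\,(\omega\zeta) \;=\; \omega^{-1}\zeta^{-1}\omega\zeta \;=\; 1,
\]
using clause (1) of Lemma~\ref{lem:involution} together with the defining formulas $\bar{\omega}=\omega^{-1}$ and $\bar{\zeta}=\zeta^{-1}$ for conjugation on the CM field $M$. For the generator $b = T$, clause (2) of Lemma~\ref{lem:involution} gives $T^{\dag'} = T^{-1}$, so immediately $b^{\dag'}b = T^{-1}T = 1$.

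Since both generators satisfy the unitary relation, and $U_{(D',\dag')}(\QQ)$ is a group, the entire subgroup $G_\alpha$ lies in $U_{(D',\dag')}(\QQ)$. There is no real obstacle here: the content of the lemma is entirely in how the involution $\dag'$ was engineered in Lemma~\ref{lem:involution}, namely so as to act by inversion on both $M^\times$ (via complex conjugation restricted to the roots of unity $\omega$ and $\zeta$) and on the distinguished lift $T$ of $\sigma$. The only bookkeeping point worth stating is that $G_\alpha \subset (D')^\times \subset (D' \otimes_\QQ \QQ)^\times$, so membership in the $\QQ$-points (as opposed to $R$-points for some extension) is automatic.
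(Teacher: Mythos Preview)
Your proof is correct and follows the same approach as the paper: check the unitary condition on generators. Indeed, the paper's own proof is slightly imprecise in stating that $G_\alpha$ is generated by $T$ and $\omega$ (since $\omega = T^{p-1}$ already), whereas you correctly use the generators $a = \omega\zeta$ and $b = T$ identified at the end of the proof of Theorem~\ref{thm:globalembed}; of course the verification for $\omega\zeta$ is no harder than for $\omega$, so the arguments are essentially identical.
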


\begin{proof}
We just need to check on generators.  The subgroup $G_\alpha$, as
constructed in Theorem~\ref{thm:globalembed}, is generated by $T$ and
$\omega$.
We have
$$ T^\dag T = \omega^{-1} T^{p-2} T = \omega^{-1} \omega = 1 $$
and 
$$ \omega^\dag \omega = \bar{\omega} \omega = \omega^{-1} \omega = 1. $$
\end{proof}

\begin{prop}\label{prop:dag}
There exists a positive involution of the second kind $\dag$ on $D$ which
extends the involution $\dag'$ on $D'$. For any such extension $\dag$, we
have 
$$ G_\alpha \subseteq U_{(D,\dag)}(\QQ).
$$
\end{prop}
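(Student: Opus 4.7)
The inclusion $G_\alpha \subseteq U_{(D,\dag)}(\QQ)$ is immediate once $\dag$ extends $\dag'$: for any $g \in G_\alpha \subset (D')^\times$ (Theorem~\ref{thm:globalembed}), Lemma~\ref{lem:unitary} gives $g^{\dag'}g = 1$, hence $g^\dag g = 1$. The substance of the proposition therefore lies in producing a positive involution $\dag$ of the second kind on $D$ with $\dag|_{D'} = \dag'$. My plan is to build such a $\dag$ in three moves: produce some involution of the second kind on $D$, adjust it by an inner automorphism to extend $\dag'$, and then adjust by a further inner automorphism from $C_D(D') = L$ to achieve positivity.

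Existence of \emph{some} involution of the second kind $\dag_0$ on $D$ follows from Albert's criterion: such an involution exists iff $\mathrm{cor}_{F/F^+}[D] = 0$ in $\Br(F^+)$, which, via $\mathrm{cor}_{F/F^+} = 1 + c^*$ on $\Br(F)$, reduces to $\Inv_v D + \Inv_{\bar v}D = 0$ for every pair of $c$-conjugate places of $F$---precisely the symmetry built into the invariants of Theorem~\ref{thm:globalembed}. Having fixed such $\dag_0$, I would next adjust it so that $\dag_0(L) = L$ (by Skolem-Noether) and $\dag_0|_L = c|_L$ (correcting the a priori $\Delta[2]$-ambiguity by a further inner modification by an element of the normalizer of $L$ in $D^\times$, if necessary). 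Then $\dag_0$ stabilizes $D' = C_D(L)$, and two involutions of the second kind on $D'$ extending $c|_L$ differ by inner conjugation by some $v \in (D')^\times$ with $v^{\dag'} = \lambda v$, $\lambda \bar\lambda = 1$; Hilbert~90 for $L/L^+$ writes $\lambda = \mu/\bar\mu$, and after replacing $v$ by $\mu v$ we may take $v$ to be $\dag'$-fixed. Then $\dag_1 := \mathrm{Inn}(v^{-1}) \circ \dag_0$ is an involution of the second kind on $D$ restricting to $\dag'$ on $D'$.

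To achieve positivity, I would modify $\dag_1$ to $\dag_z := \mathrm{Inn}(z) \circ \dag_1$ for a suitable $z \in (L^+)^\times$. Since $z \in L \subseteq C_D(D')$ commutes with $D'$ and satisfies $\bar z = z$, the map $\dag_z$ is again an involution of the second kind and still satisfies $\dag_z|_{D'} = \dag'$. Positivity of $\dag_z$ is an archimedean condition at each real place $v$ of $F^+$: there $D \otimes_{F^+}\RR \cong M_n(\CC)$ carries a Hermitian form $B_z$ whose definiteness is exactly positivity, and varying $z$ through $(L^+)^\times$ rescales $B_z$ by the real signs of $z$ at the embeddings of $L^+$ above $v$, offering $[L:F]$ independent sign choices per archimedean place. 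Because $\dag'$ is already positive on $D'$, the restriction of the form to each $D'$-block is automatically definite regardless of $z$; weak approximation in $L^+$ then furnishes a $z$ for which $B_z$ is positive definite at every archimedean place of $F^+$ simultaneously.

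The main obstacle I anticipate is this positivity step: one must confirm that the $(L^+)^\times$-twisting genuinely provides the claimed independent control over the block signs of $B_z$, with no parity or coupling constraint, and that these signs are sufficient to flip each block to definite. I expect this to follow from a careful analysis of the decomposition of $D \otimes_{F^+}\RR$ under the action of $L \otimes_F\CC$, combined with the positivity of $\dag'$ already established.
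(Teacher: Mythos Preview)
Your proof is correct and is essentially a reconstruction of the argument of \cite[Lem.~9.2]{Kottwitz}, which is precisely what the paper invokes; the paper's own proof consists only of that citation together with the same appeal to Lemma~\ref{lem:unitary} for the second assertion.

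One point worth tightening: the passage from an arbitrary $\dag_0$ on $D$ to one preserving $L$ with $\dag_0\vert_L = c_L$ is slightly more delicate than a bare Skolem--Noether, since $x \mapsto u\dag_0(x)u^{-1}$ is again an \emph{involution} only when $u\dag_0(u)$ is central, which is not automatic. In your setup one finds only $u^{-1}\dag_0(u) \in C_D(L) = D'$, so a further correction inside $(D')^\times$ is needed before the map squares to the identity; this is routine but should be made explicit. Your positivity step is exactly right and is the heart of the matter: since $C_D(D') = L$, any two involutions on $D$ extending $\dag'$ differ by an element of $(L^+)^\times$, and the positivity of $\dag'$ makes each archimedean block of the associated Hermitian form definite up to a sign which weak approximation in $L^+$ lets you prescribe independently.
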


\begin{proof}
It suffices to show that the involution $\dag'$ on $D'$ extends as a
positive involution of the second kind to $D$.  This is established by
duplicating the argument of \cite[Lem.~9.2]{Kottwitz}.  The second
statement is immediate from Lemma~\ref{lem:unitary}.
\end{proof}

\section{Positive results}\label{sec:shaut}

Let $p$ be an odd prime.
Proposition~\ref{prop:negative} indicates that if $G_\alpha$ is going to be
the automorphism group of a mod $p$ point of the height $n$ locus of one of the
unitary Shimura stacks of type $(1,n-1)$ under consideration, with $n =
(p-1)p^{\alpha-1}m$, then $p \le 7$ and $m = 1$.  

In this section we will assume $p \in \{3, 5, 7\}$, and that $n = (p-1)p^{r-1}$,
and show that the finite group $G_r$ is the automorphism group of such
a mod $p$ point of a Shimura stack.  

For $p \in \{ 5, 7 \}$, let $F = \QQ(\omega)$, where, as in
Section~\ref{sec:global}, $\omega$ is a $(p-1)$st root of unity.  For $p =
3$, we can choose $F$ more freely: for concreteness choose $F =
\QQ(\sqrt{-2})$ in this case.  
Note that for
these primes, $F$ is a quadratic imaginary extension of $\QQ$ which splits
at $p$.
In each of these cases, $F = \QQ(\delta)$
where, for $p = 3,5,7$ we have $\delta = \sqrt{-2}, \sqrt{-1}, \sqrt{-3}$,
respectively.

We fix the rest of our defining Shimura data as follows:
\begin{align*}
B & = M_n(F), \\
* & = \text{conjugate transpose involution on $B$,} \\
\mc{O}_{B,(p)} & = M_n(\mc{O}_{F,(p)}), \\
V & = B, \\
\bra{x, y} & = \Tr_{F/\QQ} \Tr_{B/F} (x \beta y^*), \: 
\text{ for } \: \beta = 
\begin{bmatrix}
2\delta &   &   & 
\\
  & -2\delta&   & 
\\
  &   & \ddots &
\\
  &   &   & -2\delta
\end{bmatrix} \in B.
\end{align*}
Thus, the pairing $\bra{-,-}$ is the alternating hermitian form associated
(see \cite[Lem.~5.1.2]{taf})
to the symmetric hermitian form on $F^n$ given by the matrix
$$
\xi = 
\begin{bmatrix}
1 &   &   & 
\\
  & -1 &   & 
\\
  &   & \ddots &
\\
  &   &   & -1
\end{bmatrix}.
$$
We denote
$$ C = B^{op} = \End_B(V) $$
and let $\iota$ be the involution on $C$ induced from $\bra{-,-}$.
Let $GU$ be the associated unitary similitude group over $\QQ$, 
with $R$-points
\begin{align*}
GU(R)
= & \{ g \in C \otimes_\QQ R \: : \: g^\iota g \in R^\times \}.
\end{align*}

The goal of this section is to prove the following theorem.

\begin{thm}\label{thm:shaut}
For $p \in \{3, 5, 7\}$, and the group $GU$ described above, there exists a 
maximal compact open subgroup
$$ K \subset GU(\AF^{p,\infty}) $$
such that in the associated Shimura stack $\Sh(K)$, there exists a point
$$ \mathbf{A}_0 = (A_0, i_0, \lambda_0, [\eta_0]_K) \in
\Sh(K)^{[n]}(\bar{\FF}_p) $$
with
$$ \Aut(\mbf{A}_0) \cong G_r. $$
Under the map
$$ \Aut(\mbf{A}_0) \rightarrow \Aut(\epsilon\mbf{A}_0(u)) \cong \MS_n $$
the group $G_r$ embeds as a maximal finite subgroup with maximal $p$-order.
\end{thm}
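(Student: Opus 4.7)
The plan is to produce $\mathbf{A}_0$ by first specifying its endomorphism algebra with Rosati involution, then invoking the PEL moduli-theoretic description of mod $p$ points. First, apply Theorem~\ref{thm:globalembed} with $\alpha = r$ and $m = 1$ to the specified field $F$ to obtain a central division algebra $D$ over $F$ with $\Inv_{y_1} D = 1/n$, $\Inv_{\bar y_1} D = -1/n$, and trivial invariants at every other finite place of $F$, into which $G_r$ embeds. Apply Proposition~\ref{prop:dag} to extend the involution $\dag'$ of Section~\ref{sec:involutions} to a positive involution $\dag$ of the second kind on $D$ with $G_r \subset U_{(D,\dag)}(\QQ)$. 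Because $D \otimes_F F_v$ is split for every finite $v \nmid p$, the algebraic group $GU_{(D,\dag)}$ becomes an inner form of $GU = GU_{(B,*)}$ that is isomorphic to $GU$ over $\AF^{p,\infty}$.

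Next, arrange that $\dag$ induces a hermitian form of signature $(1,n-1)$ at the archimedean place. The extension $\dag$ of $\dag'$ in Proposition~\ref{prop:dag} is unique only up to replacement by $x \mapsto c^{-1} x^\dag c$ for a $\dag$-symmetric invertible $c \in D$, and varying $c$ varies the isomorphism class of the associated hermitian form on $D \otimes_\QQ \RR$; by the classification of hermitian forms over a division algebra together with the positivity of $\dag$, we can select $c$ so that the form has signature $(1, n-1)$. With this $\dag$ in place, Honda-Tate theory in the PEL context of~\cite{Kottwitz} produces an abelian variety $A_0/\bar{\FF}_p$ with $\mc{O}_{B,(p)}$-action $i_0$ and a $\ZZ_{(p)}$-polarization $\lambda_0$ whose Rosati involution restricts to $\dag$ on $D$, satisfying $\End^0_{\mc{O}_B}(A_0) \cong D$ and such that $\epsilon A_0(u)^0$ has formal height $n$: one selects a CM type compatible with signature $(1,n-1)$, lifts to a CM abelian variety over a number field, and reduces modulo $p$.

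Finally, fix a level structure $\eta_0$ and use $\xi_{\eta_0}$ to identify $GU_{\mathbf{A}_0}(\AF^{p,\infty}) \cong GU(\AF^{p,\infty})$. The image of $G_r \subset \mc{O}_D^\times$ under this isomorphism lies inside a maximal compact open subgroup of $GU(\AF^{p,\infty})$; choose $K$ to be such a maximal compact open containing this image. Then by the lemma of Section~\ref{sec:taf}, $\Aut(\mathbf{A}_0) = \Gamma(K) = GU_{\mathbf{A}_0}(\ZZ_{(p)}) \cap K$, which contains $G_r$. The reverse inclusion follows because $\Gamma(K)$ embeds into $\MS_n$ and, by construction, contains the element $a$ of order $p^r$; Hewett's uniqueness of maximal finite subgroups of $\MS_n$ with an element of order $p^r$ then forces $\Gamma(K) = G_r$. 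The hardest part will be the signature adjustment in the middle step: arranging $\dag$ to realize signature $(1,n-1)$ and then producing the abelian variety with exactly the required PEL data (rather than an isogenous object with the wrong polarization class) demands careful bookkeeping of local invariants at $p$, $\infty$, and the finite primes, and one must also verify that no finite subgroup of $\MS_n$ strictly between $G_r$ and a Hewett overgroup slips into $\Gamma(K)$ when $K$ is enlarged to its maximal form.
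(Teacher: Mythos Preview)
Your approach reverses the paper's: you build $(D,\dag)$ abstractly with $G_r$ unitary inside and then try to manufacture a matching point on $\Sh(K)$, whereas the paper starts from an arbitrary height-$n$ point (which exists by \cite[Prop.~14.3.2]{taf}) and proves that $G_r$ embeds $\dag$-unitarily into its endomorphism ring. Your direction has two genuine gaps. First, the signature-adjustment step is confused. As a Rosati involution, $\dag$ must be \emph{positive}; Proposition~\ref{prop:dag} already delivers that, and the signature $(1,n-1)$ refers to the fixed form on $V$ defining $GU$, not to $\dag$ on $D$. If you nonetheless modify $\dag$ by $c$ while keeping $G_r$ unitary, then $c$ must centralize $G_r$ and hence the $F$-subalgebra $D'$ it generates, forcing $c\in Z_D(D')=L$; the symmetry $c^\dag=c$ then gives $c\in L^+$. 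At the real place $L^+$ embeds block-diagonally in $M_n(\CC)$ with $p^{r-1}$ blocks of size $p-1$, so the accessible signatures from the definite one move only in multiples of $p-1$ and can never reach $(1,n-1)$.

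Second, and more fundamentally, your claim that $GU_{(D,\dag)}\cong GU$ over $\AF^{p,\infty}$ is unjustified. That $D_\ell$ is split for $\ell\neq p$ identifies the algebras but says nothing about the local discriminant of the involution; at primes $\ell$ non-split in $F$ there are two non-isomorphic unitary groups on $M_n(F_\ell)$, and without matching these invariants you cannot produce the level structure $\eta_0$ landing on the \emph{specified} $\Sh(K)$. This matching is the actual substance of the paper's proof: it takes $(D,\dag)$ from a point on $\Sh(K)$ (so the local invariants away from $p$ are automatically those of $\iota$ via $\eta_0$), invokes a local-global embedding principle (Lemma~\ref{lem:embedding}) to get $(M,\br{(-)})\hookrightarrow(D,\dag)$, and then shows via explicit local discriminant computations (Lemmas~\ref{lem:iotainvs}, \ref{lem:nonsplit}, \ref{lem:split}, \ref{lem:dag'}) that $\dag\vert_{D'}$ is \emph{equivalent} to the model involution $\dag'$ (Lemma~\ref{lem:dagdag'}), whence $G_r\hookrightarrow GU_{(D',\dag')}(\QQ)\cong GU_{(D',\dag)}(\QQ)\hookrightarrow GU_{\mbf{A}_0}(\QQ)$. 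Those computations --- including the exceptional case $p=5$, $\ell=2$ where the Witt index of $\iota$ is $n/2-1$ --- are the core of the argument and cannot be bypassed.
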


\begin{rmk}
Theorem~\ref{thm:shaut} merely asserts that there exists a point with the desired automorphism group.  It is not, in general, the case that every mod $p$ point of the height $n$ locus has isomorphic automorphism group.  The desired mod $p$ point is globally determined by the (rather arbitrary) local choices of the maximal compact subgroups $K_\ell$ of (\ref{eq:Kell}). 
\end{rmk}

In order to prove Theorem~\ref{thm:shaut}, we will need several lemmas.
Let $K_0$ be any compact open subgroup of $GU(\AF^{p,\infty})$, and choose
an $\bar{\FF}_p$-point
$$ \mbf{A}_0 = (A_0, i_0, \lambda_0, [\eta_0]_{K_0}) 
\in \Sh(K_0)^{[n]}(\bar{\FF}_p).
$$
(By \cite[Prop.~14.3.2]{taf}, the set
$\Sh(K_0)^{[n]}(\bar{\FF}_p)$ is non-empty.)  Let $(D,\dag)$ be the
corresponding quasi-endomorphism ring with Rosati involution, as in
Section~\ref{sec:taf}.

Let $\zeta$, $M$, and $L$ be as in Section~\ref{sec:global}.  Let $L^+$ be
the fixed field under the CM-involution of $L$.  Note that in
the notation of Section~\ref{sec:global}, $F = \QQ(\omega)$ ($F =
\QQ(\sqrt{-2})$ if $p = 3$), $m = 1$, and
$\alpha = r$.  Furthermore, $L = F[p^{r-1}]$ and $L^+ = \QQ[p^{r-1}]$.
Note that the division algebra $D$ satisfies the hypotheses
of Theorem~\ref{thm:globalembed}, and therefore there exists an embedding
of $G_r$ in $D$.  
We need to show that an embedding exists so that there is
a containment
$$ G_r \subset GU_{\mbf{A}_0}(\ZZ_{(p)}), $$
and that there exists a compact open subgroup $K \subset GU(\AF^{p,\infty})$, 
so that there is a containment
$$ G_r \subset \Gamma(K) = GU_{\mbf{A}_0}(\ZZ_{(p)}) \cap K = \Aut (A_0,
i_0, \lambda_0, [\eta_0]_{K}). $$

We pause to recollect 
the essential facts concerning the classification of involutions
of the second kind
on a central simple algebra $E$ over $K$, a quadratic extension of $K_0$,
a local or global number field (see, for instance, \cite{Scharlau}, or
\cite[Ch.~5]{taf}, though the latter only treats the case where 
$K_0 = \QQ$ or $\QQ_v$).  
Suppose that $\dim_K E = d^2$. 
Fix a positive involution $*$ on $E$ of the second kind.
There are
bijections
\begin{align*}
H^1(K_0, GU_{(E,*)}) & \cong 
\left\{ 
\begin{array}{l}
\text{Similitude classes of non-degenerate} \\ 
\text{$*$-hermitian $*$-symmetric forms:} \\
\quad E \otimes E \rightarrow K
\end{array}
\right\}
\\
& \cong
\left\{ 
\begin{array}{l}
\text{Equivalence classes of involutions} \\ 
\text{of the second kind on $E$} 
\end{array}
\right\}.
\end{align*}

In the case where $K$ is a global field,
if $d$ is even, the group $GU_{(E,*)}$ satisfies the Hasse
principle \cite[Sec.~7]{Kottwitz}: the map
$$ H^1(K_0, GU_{(E,*)}) \rightarrow \prod_v H^1((K_0)_v, GU_{(E,*)}) $$
is an injection, and an involution $(-)^\#$ of the second kind on $E$ is
determined by the involution it induces on $E_v$ for each place $v$.  Using
the Noether-Skolem theorem and Hilbert Theorem 90 to express an involution
$(-)^\#$ on $E$ by
$$ x^\# = \xi^{-1} x^* \xi $$
for $\xi \in E$ with $\xi^* = \xi$, the associated hermitian form is given
by
$$ (x,y) = \Tr_{E/K}(x \xi y^*). $$ 
More precisely, the involution is determined by the
discriminant of the associated hermitian form
$$ \disc_{(-,-)} := N_{E/F}(\xi) \in K_0^\times/N(K^\times) $$
and the (unordered) signature of 
the form $(-,-)$ at each of the real places of $K_0$
which do not split in $K$.  The local Galois cohomology $H^1((K_0)_v,
GU_{(E,*)})$
is only nontrivial for places $v$ of $K_0$ which do not split in $K$.

Assume now that $K$ is a non-archimedean local number field, and that $E
\cong
M_d(K)$.  If $d > 2$,
the existence of the involution $*$ actually forces $E$ to be split.
Assume that $*$ is the involution on $M_d(K)$ given by 
conjugate transpose.  The associated hermitian form $(-,-)$ may be regarded
as the hermitian form on $K^d$, associated to the matrix $\xi$. The
isometry class of $(-,-)$ is classified by its
discriminant
$$ \disc_{(-,-)} := \det \xi \in K_0^\times/N(K^\times) \cong \ZZ/2. $$
If $d$ is odd, any two hermitian forms lie in the same similitude class
(see \cite[Cor.~3.5.4]{taf}).  If $d$ is even, then if two forms are in the
same similitude class, they are isometric.  Therefore, the invariant
$\disc$ is an invariant of the involution associated to the hermitian form.
When $d$ is even, the similitude class of the form is 
determined by its Witt index, the
dimension of a maximal totally isotropic subspace of $K^d$ for $(-,-)$.  We
have \cite[Sec.~12.3]{taf}
\begin{align*}
\disc_{(-,-)} = (-1)^{d/2} \in K_0^\times/N(K^\times) & \Leftrightarrow
\text{Witt index} = d/2, \\
\disc_{(-,-)} \ne (-1)^{d/2} \in K_0^\times/N(K^\times) & \Leftrightarrow
\text{Witt index} = d/2-1. \\
\end{align*}
If the form arises from the completion of a global form, the
local discriminant is simply the image of the global discriminant under
the completion map, \emph{provided the global involution $*$ is equivalent
to conjugate transpose after completion}. 

The following lemmas give some useful local presentations of the
conjugate transpose involution on a local matrix algebra.  We will use
these lemmas later in this section to compute the local invariants of the
involution $\dag'$ constructed in Lemma~\ref{lem:involution}.

\begin{lem}\label{lem:nonsplit}
Let $K/K_0$ be an unramified quadratic extension of local nonarchimedean
number fields.    
Suppose that $K'/K$ is the unramified extension of degree
$n$, so that $\Gal(K'/K) = C_n = \bra{\sigma}$.  Let $\br{(-)}$ denote the
unique element of $\Gal(K'/K_0) \cong C_{2n}$ of order $2$.
Let $E$ be the (split) central simple algebra over $K$ 
associated to the split exact sequence
$$ 1 \rightarrow (K')^\times \rightarrow (K')^\times \rtimes C_n
\rightarrow C_n \rightarrow 1. $$
Let $\#$ be the unique involution on $E$ such that
\begin{align*}
x^\# & = \bar{x} \qquad \text{for $x \in K'$}, \\
\sigma^\# & = \sigma^{-1}.
\end{align*}
Then the pair $(E, \#)$ is isomorphic to $M_n(K)$ with the
conjugate-transpose involution.
\end{lem}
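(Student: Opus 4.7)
The plan is in three steps: first identify $E$ with $\End_K(K')$, then transport the involution $\#$ to an adjoint involution with respect to an explicit hermitian form on $V = K'$, and finally show this form is isometric to the standard one.

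Since the extension $1\to (K')^\times\to (K')^\times\rtimes C_n\to C_n\to 1$ splits, the associated class in $H^2(K'/K)$ is trivial and $E$ has trivial Brauer class. Explicitly, I would let $E$ act on $V=K'$, viewed as an $n$-dimensional $K$-vector space, by letting the subalgebra $K'\subset E$ act by left multiplication and $\sigma$ act by the Galois automorphism; these actions intertwine correctly because $\sigma(xv)=\sigma(x)\sigma(v)$. The action is faithful, and a dimension count gives $E\cong \End_K(V)\cong M_n(K)$.

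On $V$ I would use the sesquilinear form $h(u,v)=\Tr_{K'/K}(u\bar{v})$. It is hermitian because the conjugation $\bar{\cdot}$ and $\sigma$ commute in the abelian group $\Gal(K'/K_0)$ and $\Tr_{K'/K}$ is $\sigma$-invariant. The relations $m_x^\#=m_{\bar x}$ and $\sigma^\#=\sigma^{-1}$ as adjoints for $h$ then follow from these same two facts by a short calculation, so under the identification $E\cong \End_K(V)$ the involution $\#$ is precisely the adjoint involution of $h$.

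The main obstacle is to show that $h$ is isometric to the standard hermitian form $h_{\mr{std}}(u,v)=\sum u_i\bar v_i$ on $K^n$, for only then is $\#$ literally conjugate transpose in some $K$-basis. Since $K/K_0$ is an unramified quadratic extension of nonarchimedean local fields, non-degenerate hermitian forms on $K^n$ are classified up to isometry by their discriminant in $K_0^\times/N_{K/K_0}(K^\times)\cong \ZZ/2$, and surjectivity of the norm on unit groups for unramified extensions gives $\mc{O}_{K_0}^\times\subset N_{K/K_0}(K^\times)$. I would compute $\disc(h)$ using a normal integral basis $\{\sigma^i(\theta)\}$ of $\mc{O}_{K'}$ over $\mc{O}_K$, which exists because $K'/K$ is unramified. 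The Gram matrix $H$ in this basis is circulant with entries in $\mc{O}_K$, so $\disc(h)=\det H\in \mc{O}_{K_0}$. Reducing modulo the uniformizer yields the analogous hermitian form on $\FF_{q^{2n}}$ as an $\FF_{q^2}$-vector space, given by $(u,v)\mapsto \Tr_{\FF_{q^{2n}}/\FF_{q^2}}(uv^{q^n})$; this is non-degenerate because $v\mapsto v^{q^n}$ is an automorphism and the trace form of a separable extension is non-degenerate. Hence $\det H$ is a unit in $\mc{O}_{K_0}$, $\disc(h)$ is a norm, $h\cong h_{\mr{std}}$, and $\#$ becomes conjugate transpose in the resulting orthonormal basis.
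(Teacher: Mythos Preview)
Your argument is correct and follows the paper's setup closely through the first three steps: the identification $E\cong\End_K(K')$, the hermitian form $h(u,v)=\Tr_{K'/K}(u\bar v)$, and the verification that $\#$ is the adjoint involution of $h$ are exactly what the paper does.

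The discriminant computation diverges. The paper diagonalizes $h$ over $K$, then passes to the associated $K_0$-bilinear form $\Tr_{K/K_0}\circ h$ on $K'$, relates its determinant to $\disc(h)$ via the explicit basis $(e_i,\delta e_i)$, and finally compares with the ordinary trace pairing $\Tr_{K'/K_0}(xy)$, invoking the classical fact (Serre, \emph{Local Fields}) that the discriminant of an unramified extension is a unit. Your route is more direct: you pick a normal integral basis of $\mc{O}_{K'}/\mc{O}_K$, observe that the Gram matrix of $h$ is integral, and reduce modulo the maximal ideal to the hermitian trace form on $\FF_{q^{2n}}/\FF_{q^2}$, which is visibly nondegenerate. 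This avoids the detour through the $K_0$-bilinear form and the citation to Serre, at the cost of appealing to the existence of a normal integral basis for unramified extensions (a standard but slightly less elementary fact than what you need---any $\mc{O}_K$-basis of $\mc{O}_{K'}$ would do, since such a basis always reduces to an $\FF_{q^2}$-basis of $\FF_{q^{2n}}$). Either way the conclusion is the same: $\disc(h)\in\mc{O}_{K_0}^\times\subset N_{K/K_0}(K^\times)$.
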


\begin{proof}
Observe that there is a natural way to identify $E$ with $\End_K(K')$:
embed $K'$ in $\End_K(K')$ via its action by multiplication, and regard
$\sigma$ as a $K$-linear endomorphism of $K'$.  Consider the hermitian form
\begin{align*}
(-,-): K' \times K' & \rightarrow K, \\
(x,y) & \mapsto \Tr_{K'/K}(x\bar{y}).
\end{align*}
Observe that for $a, x, y \in K'$, we have
\begin{gather*}
(ax,y) = (x,\bar{a}y) \\
(\sigma(x), y) = (x, \sigma^{-1}(y)) 
\end{gather*}
which implies that the involution $\#$ on $E$ is precisely the involution
associated to the form $(-,-)$.  We need to show that $\disc_{(-,-)} \equiv
1 \in K_0^\times/N(K^\times)$.

Choose a basis $(e_1, \ldots, e_n)$ 
of $K'/K$ so that the hermitian form $(-,-)$ is represented by a
matrix of the form
$$
\xi = 
\begin{bmatrix}
a_1 & & \\
& \ddots & \\
& & a_n
\end{bmatrix}
$$
for $a_i \in K_0^\times$.  Consider the symmetric bilinear form
$$ \Tr_{K/K_0}(-,-): K' \times K' \rightarrow K_0. $$
Write $K = K_0(\delta)$ for $\delta^2 = -d \in K_0$.  With respect to the
basis
$$ (e_1, \delta e_1, e_2, \delta e_2, \ldots, e_n, \delta e_n) $$
the symmetric bilinear form $\Tr_{K/K_0}(-,-)$ is represented by the
following matrix.
$$
\xi_0 = 
\begin{bmatrix}
2a_1 & & & &  \\
& 2da_1 & & &  \\
 & & \ddots & &  \\
 & & & 2a_n & \\
 & & & & 2da_n \\
\end{bmatrix}
$$
We deduce that
$$
\det \xi_0 = (4d)^n \det \xi.
$$
Since $K/K_0$ is unramified, $d \equiv 1$ in $K_0^\times/N(K^\times)$.
Since $4 \in N(K^\times) \subset K_0^\times$, we deduce that
$$ \disc_{(-,-)} = \det \xi \equiv \det \xi_0 \in K_0^\times/N(K^\times). $$
Now, the trace pairing
\begin{align*}
(-,-)_{K'/K_0}: K' \times K' & \rightarrow K_0, \\
(x,y) & \mapsto \Tr_{K'/K_0}(xy)
\end{align*}
is represented by the matrix
$ \xi_0 \cdot c$
where $c$ is the $K_0$-linear endomorphism of $K'$ corresponding to
$\br{(-)}$.  Note that since $c^2 = \mr{Id}$, we have $\det c \in \{ \pm 1
\}$.
Since $K/K_0$ is unramified, Corollary~1 of Section~III.5 of
\cite{Serrelocal} implies that 
$$ \det (\xi_0 \cdot c) \not\equiv \pi \in K_0^\times/(K_0^\times)^2, $$
where $\pi$ is a uniformizer of $K_0$.  Since $K/K_0$ is unramified, 
$\pi$ generates the group 
$K_0^\times/N(K^\times)$ and we deduce that
$$ \disc_{(-,-)} \equiv \det (\xi \cdot c) \equiv 1 \in
K_0^\times/N(K^\times), $$
as desired.
\end{proof}

\begin{lem}\label{lem:split}
Let $K_0$, $K$, $K'$, $n$, $\br{(-)}$, and $\sigma$ be as in
Lemma~\ref{lem:nonsplit}.  Let $E_m$ be the (split) central 
simple algebra over $K$ 
associated to the split exact sequence
$$ 1 \rightarrow ((K')^\times)^m \rightarrow ((K')^\times)^m \rtimes C_{nm}
\rightarrow C_{nm} \rightarrow 1, $$
where $C_{nm}$ acts on $((K')^\times)^m$ through the isomorphism
$$ ((K')^\times)^m \cong \Ind^{C_{mn}}_{C_n} (K')^\times, $$
as in the discussion following Example~\ref{ex:D1n}.  Let $S$ be an 
element of $E_m$ corresponding to a generator of $C_{nm}$.
Let $\#_m$ be the unique involution on $E_m$ such that
\begin{align*}
(x_{1}, \ldots , x_{m})^{\#_m}
& = (\bar{x}_{1}, \ldots, \bar{x}_{m}) 
\qquad \text{for $(x_i) \in (K')^m$}, \\
S^{\#_m} & = S^{-1}.
\end{align*}
Then the pair $(E_m, \#_m)$ is isomorphic to $M_{nm}(K)$ with the
conjugate-transpose involution.
\end{lem}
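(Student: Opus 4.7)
The plan is to imitate the proof of Lemma~\ref{lem:nonsplit}: realize $E_m$ as the endomorphism algebra of a $K$-hermitian space, identify $\#_m$ with the adjoint involution of an explicit hermitian form, and then invoke the classification (recalled earlier in this section) of involutions of the second kind on $M_{nm}(K)$ via the discriminant in $K_0^\times/N(K^\times)$. Concretely, write $F = K'$ and regard $F^m$ as an $nm$-dimensional $K$-vector space. I would identify $E_m$ with $\End_K(F^m)$ by letting $(K')^{\times m}$ act componentwise by multiplication and letting $S$ act by the semilinear cyclic shift $(x_1,\ldots,x_m) \mapsto (x_2,\ldots,x_m,\sigma(x_1))$; this is consistent with the convention recorded after Example~\ref{ex:D1n} and satisfies $S^{nm} = 1$.

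Next, I would equip $F^m$ with the $K$-hermitian form
\[
\bigl\langle (x_i),(y_i) \bigr\rangle \;=\; \sum_{i=1}^{m} \Tr_{F/K}(x_i\,\overline{y_i}),
\]
and verify on generators that its adjoint involution coincides with $\#_m$. For $a \in (K')^{\times m}$ this is immediate from Galois-invariance of the trace; for $S$ one uses that $\sigma$ and the complex conjugation $\overline{(-)}$ commute (since $\Gal(K'/K_0)$ is abelian) together with Galois-invariance of $\Tr_{F/K}$. With this in hand, the classification of hermitian forms on $K^{nm}$ reduces the lemma to computing $\disc_{\langle-,-\rangle} \in K_0^\times/N(K^\times)$.

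For the discriminant, I would choose a $K$-basis of $F$ that diagonalizes the form $\Tr_{F/K}(x\,\overline{y})$ used in Lemma~\ref{lem:nonsplit}, with diagonal entries $a_1,\ldots,a_n \in K_0^\times$. The induced basis of $F^m$ then yields a block-diagonal Gram matrix for $\langle-,-\rangle$ consisting of $m$ identical diagonal blocks, so $\disc_{\langle-,-\rangle} \equiv (\prod_j a_j)^m$ modulo $N(K^\times)$. Lemma~\ref{lem:nonsplit} asserts exactly that $\prod_j a_j \equiv 1$ in $K_0^\times/N(K^\times)$, hence $\disc_{\langle-,-\rangle} \equiv 1$, and $(E_m,\#_m)$ is therefore isomorphic to $M_{nm}(K)$ with the conjugate-transpose involution.

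The main obstacle I anticipate is the verification that $\#_m$ is the adjoint of $\langle-,-\rangle$ on the generator $S$: one must carefully interleave the cyclic shift, the Galois twist $\sigma$, and the complex conjugation $\overline{(-)}$, invoking repeatedly that all of these automorphisms commute inside the abelian group $\Gal(K'/K_0)$. Once that bookkeeping is done, the discriminant computation collapses cleanly to the $m=1$ case already settled in Lemma~\ref{lem:nonsplit}.
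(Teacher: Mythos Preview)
Your proposal is correct, but it takes a different route from the paper's proof. The paper does not redo the hermitian-space-plus-discriminant argument for $F^m$; instead it uses Lemma~\ref{lem:nonsplit} as a black box to fix an isomorphism $(E_1,\#_1)\cong(M_n(K),(-)^*)$, then builds an explicit block-matrix embedding $((K')^\times)^m\rtimes C_{nm}\hookrightarrow M_{nm}(K)^\times$ (diagonal blocks for $(K')^m$, a cyclic permutation matrix with a single $\sigma$-block for $S$) and checks by direct inspection that conjugate-transpose restricts to $\#_m$. Your approach realizes $E_m$ as $\End_K(F^m)$ equipped with the orthogonal sum of $m$ copies of the trace form from Lemma~\ref{lem:nonsplit}, verifies on generators that the adjoint is $\#_m$, and then reads off $\disc\equiv(\prod_j a_j)^m\equiv 1$ from the $m=1$ case. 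Both arguments ultimately reduce to Lemma~\ref{lem:nonsplit}; the paper's is more explicitly constructive (it literally writes down the isomorphism), while yours stays closer in spirit to the proof of Lemma~\ref{lem:nonsplit} and makes the reduction to $m=1$ transparent at the level of the discriminant. Your anticipated ``obstacle'' about commuting $\sigma$, the cyclic shift, and $\overline{(-)}$ is indeed just bookkeeping: the key identity $\Tr_{F/K}(\sigma(x_1)\,\overline{y_m})=\Tr_{F/K}(x_1\,\overline{\sigma^{-1}(y_m)})$ follows from Galois-invariance of the trace and abelianness of $\Gal(K'/K_0)$, exactly as you say.
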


\begin{proof}
Note that the case of $m = 1$ is precisely the content of
Lemma~\ref{lem:nonsplit}.
Using Lemma~\ref{lem:nonsplit}, fix an isomorphism
$$ (E_1, \#_1) \cong (M_{n}(K), (-)^*), $$
where $(-)^*$ denotes conjugate-transpose.  Under this isomorphism, we may
represent elements of $K'$, as well as $\sigma$, as giving $n\times n$
matrices.  Consider the diagonal embedding
\begin{align*}
((K')^\times)^m & \hookrightarrow M_{nm}(K)^\times, \\
(x_1, \ldots, x_m) & \mapsto 
\begin{bmatrix}
x_1 & & \\
& \ddots & \\
& & x_m 
\end{bmatrix}
\end{align*}
where here, and elsewhere in this proof, $nm \times nm$ matrices are
represented by $m \times m$ matrices of blocks, with each block
corresponding to an $n \times n$ matrix.
Under this embedding, 
the conjugate transpose involution $(-)^*$ on $M_{nm}(K)$ restricts to
$$ (x_1, \ldots, x_m) \mapsto (\bar{x}_1, \ldots , \bar{x}_m ). $$
We may extend this embedding to an embedding 
$$ ((K')^\times)^m \rtimes
C_{nm}  \hookrightarrow (M_{nm}(K))^\times $$
by sending $S$ to the matrix
$$
\begin{bmatrix}
0 & 1 & & & \\
& 0 & 1 & & & \\
& & \ddots & \ddots & \\
& & & & 1 \\
\sigma & & & & 0 
\end{bmatrix}.
$$
It is easily checked that the transpose of this matrix is the matrix
corresponding to $S^{nm-1}$.
\end{proof}

We now determine the invariants of our involution $\iota$ on $C \cong
M_n(F)$ which defines the group $GU$.  

\begin{lem}\label{lem:iotainvs}
For each prime $\ell$ which does not split in $F$, the Witt index of
$GU(\QQ_\ell)$ is $n/2$, except in the case of $p = 5$ and $\ell = 2$,
for which the Witt index is $n/2-1$.
\end{lem}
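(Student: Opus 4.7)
The plan is to reduce the lemma to a discriminant calculation using the classification recollected just above its statement. The algebra $C \cong M_n(F)$ is already split and $n = (p-1)p^{r-1}$ is even (since $p$ is odd), while $\iota$ is conjugate-transpose with respect to the diagonal hermitian matrix $\xi = \mathrm{diag}(1,-1,\dots,-1)$. Accordingly, at any nonarchimedean place $\ell$ of $\QQ$ not splitting in $F$, the Witt index of $GU(\QQ_\ell)$ is $n/2$ if $\disc_\ell(\xi) \equiv (-1)^{n/2}$ in $\QQ_\ell^\times / N_{F_\ell/\QQ_\ell}(F_\ell^\times)$, and is $n/2-1$ otherwise.

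First I would compute the global discriminant: $\disc(\xi) = \det(\xi) = (-1)^{n-1} = -1$, since $n$ is even. Then I would evaluate $(-1)^{n/2}$ case by case, using that $n/2 = 3^{r-1}$ for $p=3$, $n/2 = 2\cdot 5^{r-1}$ for $p=5$, and $n/2 = 3\cdot 7^{r-1}$ for $p=7$. In the cases $p \in \{3,7\}$ the exponent $n/2$ is odd and $(-1)^{n/2} = -1 = \disc(\xi)$, so the defining congruence holds already over $\QQ$ and hence at every completion; the Witt index is therefore $n/2$ at every non-split $\ell$.

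The case $p=5$, $F = \QQ(\sqrt{-1})$, is the only one requiring a genuine local check, since there $(-1)^{n/2} = +1$ while $\disc(\xi) = -1$, so one must determine at each non-split $\ell$ whether $-1 \in N_{F_\ell/\QQ_\ell}(F_\ell^\times)$. For each odd inert prime $\ell$ (equivalently, $\ell \equiv 3 \pmod 4$), $F_\ell/\QQ_\ell$ is the unramified quadratic extension, every unit is a norm, so $-1$ is a norm and the Witt index is $n/2$. The sole remaining place is the ramified prime $\ell = 2$; here $-1$ is not a norm from $\QQ_2(\sqrt{-1})$, as witnessed by the Hilbert symbol $(-1,-1)_2 = -1$ (equivalently, $-1$ is not a sum of two squares in $\QQ_2$). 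The local discriminant is therefore non-trivial modulo norms at $\ell = 2$, and the Witt index there drops to $n/2-1$.

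The only substantive input is the Hilbert symbol computation at the prime $2$ for $p=5$; every other verification is immediate once the parity of $n/2$ is tabulated, so I do not anticipate any further obstacle.
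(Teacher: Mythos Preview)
Your proof is correct and follows essentially the same approach as the paper: compute the global discriminant $\disc(\xi)=(-1)^{n-1}=-1$, compare it to $(-1)^{n/2}$ using the parity of $(p-1)/2$, and invoke the Witt-index criterion. The paper organizes the casework slightly differently (handling all unramified non-split primes at once via the fact that units are norms, then treating the ramified prime by the parity of $n/2$), whereas you dispose of $p\in\{3,7\}$ globally and only do a genuine local check for $p=5$; your explicit Hilbert-symbol verification that $-1$ is not a norm from $\QQ_2(i)$ is a detail the paper leaves implicit.
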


\begin{proof}
The discriminant of $\iota$ is $(-1)^{n-1}$.  Since
$$ n = (p-1)p^{r-1}, $$
and $p$ is odd, $n-1$ is odd, and $\disc = -1$.
If $\ell$ is unramified, then
$\disc$ is zero in $\QQ_\ell/N(F_\ell^\times)$.  If $p = 3,7$, the quantity
$(p-1)/2$ is odd, and hence $\disc = (-1)^{n/2}$.  However, if $p = 5$,
$n/2$ is even, and $\disc \ne (-1)^{n/2}$.
\end{proof}

\begin{lem}\label{lem:embedding}
There exists an embedding $M \hookrightarrow D$ so that $\dag$ restricts to
the conjugation on $M$.
\end{lem}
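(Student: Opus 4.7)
The plan is to leverage the involution $\dag^*$ on $D$ constructed via Proposition~\ref{prop:dag} (which extends the involution $\dag'$ of Lemma~\ref{lem:involution} on $D'$) and compare it with the Rosati involution $\dag$ on $D$. By the construction of Section~\ref{sec:involutions}, $\dag^*$ is a positive involution of the second kind on $D$ which restricts to the CM conjugation on some embedded copy $\iota^*(M) \subset D' \subset D$. Since the Rosati $\dag$ is likewise a positive involution of the second kind on $D$, the problem reduces to measuring how the two involutions differ.

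First I would show that $\dag$ and $\dag^*$ are conjugate under $D^\times$: there exists $g \in D^\times$ with $x^\dag = g\, x^{\dag^*} g^{-1}$ for all $x \in D$. This is a Hasse-principle argument for $H^1(\QQ, U_{(D,\dag^*)})$, valid since $n = (p-1)p^{r-1}$ is even (as $p$ is odd); see \cite{Kottwitz}. At primes $\ell$ of $\QQ$ which split in $F$, positive involutions of the second kind on $D_\ell$ are unique up to equivalence. At the archimedean place and at finite non-split primes, positivity of both $\dag$ and $\dag^*$ together with the fixed local Brauer data of $D$ pins down the discriminants and signatures of the associated hermitian forms on both sides, so both involutions determine the same local cohomology class.

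Using Hilbert~90 to normalize $g$ so that $g^{\dag^*} = g$ (and hence $g^\dag = g$), the relation $x^\dag = g\, x^{\dag^*} g^{-1}$ restricts on $\iota^*(M)$ to $x^\dag = g\,\bar{x}\, g^{-1}$. To construct an embedding $\iota : M \hookrightarrow D$ with $\dag \circ \iota = \iota \circ \bar{(-)}$, I would seek $\iota(x) = h\, \iota^*(x)\, h^{-1}$ for a suitable $h \in D^\times$. A direct calculation shows the compatibility condition $\iota(x)^\dag = \iota(\bar x)$ is equivalent to requiring
$$ h^\dag h \in g \cdot \iota^*(M^+)^\times, $$
where $M^+$ is the maximal totally real subfield of $M$, using the fact that the centralizer of $\iota^*(M)$ in $D$ is $\iota^*(M)$ itself.

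The main obstacle is this final norm equation: one must realize a $\dag$-norm $h^\dag h$ inside the coset $g \cdot \iota^*(M^+)^\times$. My plan is to follow the cocycle/local-global argument of \cite[Lem.~9.2]{Kottwitz}, the same technique invoked in the proof of Proposition~\ref{prop:dag}. Concretely, one produces local solutions $h_v$ at each place of $\QQ$ using the explicit maximal-subfield structure of $D_v$ (in particular, using at $u$ the presentation of $D_u$ over $M_u = \QQ_p(\zeta_{p^r})$ given by Proposition~\ref{prop:Ba}, and noting triviality at split places), and then assembles them into a global $h$ via the Hasse principle for the relevant unitary group. The resulting $\iota$ is then the desired $\dag$-compatible embedding of $M$ into $D$.
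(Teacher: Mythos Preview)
Your route is genuinely different from the paper's. The paper does not compare $\dag$ with the auxiliary involution of Proposition~\ref{prop:dag} at all; instead it invokes the local--global principle for embeddings of fields with involution into simple algebras with involution \cite[Thm.~3.2]{PrasadRapinchuk2}, and then verifies place by place that $(M_v,\bar{(-)})$ embeds in $(D_v,\dag)$: trivially at the infinite place, by \cite[Prop.~A.3]{PrasadRapinchuk1} at primes split in $F$, by the Witt-index criterion of \cite[p.~340]{PlatonovRapinchuk} at finite inert primes (using Lemma~\ref{lem:iotainvs}), and by an explicit construction at the unique ramified prime in the exceptional case $p=5$, $\ell=2$.

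Your approach has a real gap. The assertion that ``positivity \dots\ pins down the discriminants'' at finite non-split primes is false: positivity of an involution is a purely archimedean condition, and at a finite prime $\ell$ inert in $F$ there remain two similitude classes of involutions on $M_n(F_\ell)$ (since $n$ is even), distinguished by the discriminant in $\QQ_\ell^\times/N(F_\ell^\times)$. Nothing in your argument forces $\disc_\ell(\dag^*)=\disc_\ell(\dag)$. Worse, Proposition~\ref{prop:dag} only asserts the \emph{existence} of some positive extension $\dag^*$ of $\dag'$; it does not compute, or even constrain, its local invariants away from infinity, so you have no grip on $\disc_\ell(\dag^*)$ at all. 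Establishing the needed match would require computations of the sort carried out in Lemmas~\ref{lem:dagdag'} and~\ref{lem:dag'}, which in the paper's logic come \emph{after} the present lemma.

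A secondary remark: if you did manage to prove $\dag\sim\dag^*$, your norm-equation step would be superfluous. An equivalence of involutions is, by Skolem--Noether, realized by an inner automorphism $\phi=\mathrm{Ad}(h)$ satisfying $\phi(x^{\dag^*})=\phi(x)^\dag$; then $\iota:=\phi\circ\iota^*$ already satisfies $\iota(\bar m)=\phi(\iota^*(m)^{\dag^*})=\phi(\iota^*(m))^\dag=\iota(m)^\dag$ with no further condition on $h$.
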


\begin{proof}
By \cite[Thm.~3.2]{PrasadRapinchuk2}, it suffices to prove that there are
local embeddings
$$ (M_v, \overline{(-)}) \hookrightarrow (D_v, \dag) $$
for each place $v$ of $\QQ$.  Since $M$ is a CM field, the case where $v$
is the infinite place
is easily verified.  If $v$ is a finite place which splits in $F$, the
local embedding follows from \cite[Prop.~A.3]{PrasadRapinchuk1}.  If $v$ is
finite and unramified in $F$, then the isomorphism
$$ (C_v, \iota) \cong (D_v, \dag) $$
induced from the level structure $\eta_0$ 
implies that, using Lemma~\ref{lem:iotainvs}, except in the
case of $p = 5$ and $\ell = 2$, the Witt index of $\dag$ is $n/2$.  The
local embeddability of $(M_v, \overline{(-)})$ then follows from
\cite[p.~340]{PlatonovRapinchuk}.  

We are left with the case of $p = 5$ and $\ell = 2$.
In this case, $F = \QQ(i)$ and the
Witt index of $\dag$ is $n/2-1$.  It suffices to prove that
$(M,\overline{(-)})$
embeds in $(M_n(F_2),\tau)$ for any involution $\tau$ of Witt index
$n/2-1$, since Witt index determines the equivalence class of an involution.  
Let $\tau_0$
be an involution of $M_n(F_2)$ with Witt index $n/2$, and use
\cite[p.~340]{PlatonovRapinchuk} to produce an embedding
$$ (M_2, \overline{(-)}) \hookrightarrow (M_n(F_2), \tau_0). $$
Since $n \equiv 0 \mod 4$, we deduce that 
$$ \disc_{\tau_0} = 1 \in \QQ_2^\times/N(F_2^\times) = (\ZZ/4)^\times. $$
Define, for $x \in M_n(F_2)$
$$ x^\tau = \xi^{-1}x^{\tau_0} \xi $$
for
$$ \xi = (1+i)\zeta + (1-i)\zeta^{-1} \in M_2 \subset M_n(F_2). $$
Note that since we have 
$$ \xi^{\tau_0} = \br{\xi} = \xi, $$
the transformation $(-)^{\tau}$ defines an involution of the second kind 
on $M_n(F_2)$.
Since $\xi \in M$, we easily see that $x^{\tau} = \br{x}$ for $x \in M$.
In particular, we have an embedding
$$ (M_2, \br{(-)}) \hookrightarrow (M_n(F_2), \tau). $$
Using the fact that $n \equiv 4 \mod 8$, together with $(i+1)^2 = 2i$, we
compute
\begin{align*}
\disc_{\tau} & = \det(\xi) \\
& = N_{M_2/F_2}((1+i)\zeta + (1-i)\zeta^{-1}) \\
& = -2^{n/2}.
\end{align*}
Since $2 = N_{F_2/\QQ_2}(1+i)$, we deduce that
$$ -2^{n/2} \equiv -1 \in \QQ_2^\times/N(F_2^\times) $$
and therefore that the Witt index of $\tau$ is $n/2-1$.  We have therefore
produced the desired local embedding.
\end{proof}

Fix an embedding
$$ (M, \br{(-)}) \hookrightarrow (D, \dag) $$
as in Lemma~\ref{lem:embedding}.  Let $D' \subset D$ be the subalgebra
which centralizes the subfield $L \subset M$.  Since $\dag$ restricts to
an involution of $M$, we deduce that $\dag$ restricts to $D'$.  Let $\dag'$
be the involution of $D'$ constructed in Lemma~\ref{lem:involution}.

\begin{lem}\label{lem:dagdag'}
The involution $\dag$ restricted to $D'$ is equivalent to the involution
$\dag'$.
\end{lem}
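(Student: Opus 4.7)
The plan is to reduce to local computations via the Hasse principle. Since $[D':L]=(p-1)^2$ with $p-1$ even, the unitary similitude group $GU_{(D',\dag')}$ satisfies the Hasse principle \cite[Sec.~7]{Kottwitz}, so it suffices to show that $\dag|_{D'}$ and $\dag'$ become equivalent after completing at every place $v$ of $L^+$.

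At every archimedean place of $L^+$ (all real, since $L$ is CM) both involutions are positive --- $\dag$ as the Rosati involution, and $\dag'$ by the positivity lemma following Lemma~\ref{lem:involution} --- and hence induce positive definite Hermitian forms on the split algebra $D' \otimes_L \CC \cong M_{p-1}(\CC)$. Any two such forms lie in a single similitude class, so local equivalence is automatic. Similarly, at a finite place $v$ of $L^+$ that splits in $L$, the cohomology $H^1(L^+_v, GU_{(D',\dag')})$ vanishes, so there is nothing to check.

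The remaining case is a finite place $v$ of $L^+$ that does not split in $L$. Such $v$ must lie over a rational prime $\ell$ that does not split in $F$: indeed, $p$ splits in $F$ as $y_1\bar{y}_1$, and $L/F$ is totally ramified at these primes, so the primes of $L^+$ above $p$ all split in $L$. At a rational prime $\ell$ not splitting in $F$, the algebra $D$ is unramified (it is ramified only at the $y_i, \bar{y}_i \mid p$), so $D_\ell \cong M_n(F_\ell)$ and in particular $D'_v$ is split. The level structure $\eta_0$ identifies $(D_\ell, \dag)$ with $(C_\ell, \iota)$, i.e.,\ with $M_n(F_\ell)$ equipped with the $\xi$-twisted conjugate-transpose involution. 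Restricting to the centralizer $D'_v$ of $L_v$, and comparing with the cyclic presentation of $\dag'$ in (\ref{eq:D'}) through Lemma~\ref{lem:split} (or Lemma~\ref{lem:nonsplit} when $L_v/F_v$ is a field rather than a product), both involutions should be identified with the standard conjugate-transpose involution on $M_{p-1}(L_v)$, yielding the same discriminant in $(L^+_v)^\times/N(L_v^\times)$.

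The main obstacle will be this final local comparison: one must carefully unwind the trace-form description of $\iota$ after restriction to the centralizer of $L_v$ inside $M_n(F_\ell)$, and match it, via the explicit identifications of Lemmas~\ref{lem:nonsplit} and~\ref{lem:split}, with the cyclic presentation (\ref{eq:D'}) of $\dag'$. This is a delicate but finite local calculation, after which the Hasse principle assembles the local equivalences into a global one.
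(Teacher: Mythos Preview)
Your overall architecture---Hasse principle for $GU_{(D',\dag')}$, trivial archimedean and split cases, reduce to finite non-split places---is exactly the paper's. But there is a genuine gap in your plan for the non-split finite places, and it is not merely that the local calculation is ``delicate.''

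First, you invoke Lemmas~\ref{lem:nonsplit} and~\ref{lem:split} to identify $(D'_v,\dag')$ with conjugate transpose. Those lemmas apply to an involution on a \emph{split} cyclic presentation, i.e.\ one with $S^{p-1}=1$ and $S^\#=S^{-1}$. Here $T^{p-1}=\omega$, not $1$, so you must first replace $T$ by $S=\beta^{-1}T$ with $N_{M_{v_1}/L_v}(\beta)=\omega_v$ \emph{and} $\bar\beta=\beta^{-1}$ (so that $S^{\dag'}=S^{-1}$). That $\beta$ may be so chosen is the entire content of Lemma~\ref{lem:dag'}, which you do not mention. Without it the identification does not go through.

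Second, and more seriously, you treat all finite non-split places uniformly. Lemmas~\ref{lem:nonsplit} and~\ref{lem:split} \emph{require} the quadratic extension (here $L_v/L^+_v$) to be unramified. There is exactly one place of $L^+$ ramified in $L$---the unique prime over $\ell\in\{2,3\}$ that ramifies in $F$---and at that place your argument does not apply. The paper does not compute there at all: it writes $\dag' = \xi^{-1}(-)^\dag\xi$, sets $\Delta=N_{D'/L}(\xi)$, and uses the fundamental exact sequence
\[
0 \to (L^+)^\times/N(L^\times) \to \bigoplus_v (L^+_v)^\times/N(L_v^\times) \to \ZZ/2 \to 0
\]
together with positivity at the archimedean places to conclude that $\Delta_v\equiv 1$ at the ramified place follows automatically once it is known at all inert places. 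This reciprocity step is the missing idea in your plan.

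Finally, for $\dag|_{D'}$ at inert $v$ the paper does not ``unwind the trace form'' of $\iota$ on the centralizer. Instead it uses that the norm map $(L^+_v)^\times/N(L_v^\times)\to \QQ_{\ell'}^\times/N(F_{\ell'}^\times)$ is an isomorphism (both extensions unramified, $[L^+_v:\QQ_{\ell'}]$ odd), reducing to the already-known $\disc_{\ell'}(\iota)$ from Lemma~\ref{lem:iotainvs}. This is considerably cleaner than what you sketch.
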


In order to prove Lemma~\ref{lem:dagdag'}, we shall need the following.

\begin{lem}\label{lem:dag'}
Let $v$ be a finite place of $L^+$ which is inert in $L$.  Then we
have
$$ \disc_v(\dag') \equiv 1 \in (L^+_v)^\times/N(L_v^\times). $$
\end{lem}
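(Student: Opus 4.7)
The strategy is to explicitly present $(D'_w, \dag'_v)$, with $w$ the unique prime of $L$ above $v$, in the form of Lemma~\ref{lem:split}, so as to identify it with $(M_{p-1}(L_w), (-)^*)$ where $(-)^*$ denotes conjugate transpose. Since conjugate transpose on $M_{p-1}(L_w)$ corresponds to the identity hermitian form with matrix $I$, its discriminant is $\det I = 1$, which gives the claim.

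First I would observe that $v$ cannot lie above $p$: the unique prime of $L^+$ above $p$ is the one lying under both $y$ and $\bar y$, which splits in $L/L^+$ by construction. Hence $v$ lies over some rational prime $\ell \neq p$, and $L_w/L^+_v$ is the unramified quadratic extension. Since $D'$ has nontrivial local invariants only at primes of $L$ above $p$ (as computed in the proof of Theorem~\ref{thm:globalembed}), the algebra $D'_w$ is split, so $D'_w \cong M_{p-1}(L_w)$. The extension $M/L$ is likewise unramified at $w$, so $M \otimes_L L_w \cong M_{w'}^k$ where $M_{w'}/L_w$ is the unique unramified extension of degree $d$, with $kd = p-1$. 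By the discussion following Example~\ref{ex:D1n}, this yields a presentation of $D'_w$ of the induced cyclic type, namely $M_w\langle T\rangle/(T^{p-1} = \omega,\, Tx = x^\sigma T)$.

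Next I would modify the generator $T$ to $T' = Ta$ for a suitable $a \in M_w^\times$, subject to two conditions: (i) $(T')^{p-1} = 1$, equivalently $N_{M_w/L_w}(a) = \omega^{-1}$ (solvable because $\omega$ is a local norm by the splitness of $D'_w$); and (ii) $(T')^{\dag'_v} = (T')^{-1}$, equivalently $a\bar{a} = 1$. These two conditions are compatible because $\omega$ is a root of unity in $L$, hence $\omega\bar\omega = 1$, so Hilbert 90 applied to $L_w/L^+_v$ writes $\omega = c/\bar c$ for some $c \in L_w^\times$; combined with the norm surjectivity for $M_w/L_w$, this should allow the simultaneous solution of (i) and (ii). Once $T'$ is in place, the new presentation of $(D'_w, \dag'_v)$ matches the hypothesis of Lemma~\ref{lem:split}, identifying it with $(M_{p-1}(L_w), (-)^*)$.

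The main obstacle is the simultaneous realization of conditions (i) and (ii) in the presence of the possibly nontrivial action of the CM involution on the factors $M_{w'}^k$ of $M_w$. A clean approach is to analyze each $\bar{(-)}$-orbit of factors separately, reducing to either the case where $\bar{(-)}$ stabilizes a single factor $M_{w'}$ (where it restricts to the CM involution and the argument proceeds as in Lemma~\ref{lem:nonsplit}) or the case where it swaps two factors (in which case the restricted involution yields an automatic hyperbolic splitting with trivial discriminant). In either case the computation reduces to the discriminant calculation already carried out in the proof of Lemma~\ref{lem:nonsplit}, giving $\disc_v(\dag') \equiv 1 \in (L^+_v)^\times/N_{L_w/L^+_v}(L_w^\times)$.
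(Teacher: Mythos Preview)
Your overall strategy matches the paper's: both reduce to Lemma~\ref{lem:split} by replacing $T$ with a modified generator $S$ (your $T'=Ta$) satisfying $S^{p-1}=1$ and $S^{\dag'}=S^{-1}$, and both correctly identify that this amounts to finding $a\in M_w^\times$ with $N_{M_w/L_w}(a)=\omega^{-1}$ and $a\bar a=1$.

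The gap is in your justification of the simultaneous solution of (i) and (ii). You write that Hilbert~90 together with norm surjectivity ``should allow'' it, but this is not carried out: writing $\omega=c/\bar c$ with $c\in L_w^\times$ does not obviously interact with the norm condition from $M_w$ down to $L_w$, and you give no construction of $a$. The paper's argument here is concrete and is the step you are missing: since $\omega$ is a root of unity of order prime to the residue characteristic $\ell$, it lies in the Teichm\"uller subgroup $\mu_{\ell^t-1}\subset L_w^\times$ (where $L_w=\QQ_{\ell^t}$), and the norm $N_{M_{v_1}/L_w}$ restricts to a surjection $\mu_{\ell^{dt}-1}\twoheadrightarrow \mu_{\ell^t-1}$. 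One therefore chooses the modifying element to be a Teichm\"uller root of unity, for which the paper asserts $\bar\beta_1=\beta_1^{-1}$. This explicit root-of-unity trick replaces your Hilbert~90 sketch entirely.

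You do correctly flag a genuine subtlety that the paper passes over without comment, namely that the CM involution may permute the factors $M_{v_i}$ of $M_w$ rather than acting componentwise as Lemma~\ref{lem:split} requires. Your proposed orbit-by-orbit analysis (single fixed factor versus swapped pair giving a hyperbolic summand with trivial discriminant) is a reasonable route to handle this, but again it is only suggested, not executed. As written, your proposal is a correct outline with the right reduction, but the two key constructive steps --- producing $a$, and handling the factor permutation --- are both left at the level of ``this should work.''
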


\begin{proof}
Let $(v_i)$
be the collection of places of $M$ which lie over $v$, so that $M_v$
splits into a product
$$ M_v = \prod_i M_{v_i} $$
of (isomorphic) field extensions of $L_v$.
Since $v$ is inert in $L$, $v$ cannot lie over $p$, and therefore each of
the extensions $M_{v_i}/L_v$ is unramified.  Let $d = [M_{v_i}:L_v]$.
The division algebra $D'_v$ corresponds to the extension
$$ 1 \rightarrow \prod_i M_{v_i}^\times \rightarrow E'_v \rightarrow C_{p-1} 
\rightarrow 1 $$
where there exists an element $T \in E'_v$ which maps to a generator
$\sigma$ of
$C_{p-1}$, and for which 
$$ T^{p-1} = ( \omega_{v_i}) \in \prod M_{v_i}^\times. $$
Note $\omega_{v_i}$ is contained in the subfield $L_v \subset M_{v_i}$, so
we will simply denote the corresponding element $\omega_v \in L_v$.
Since $M_{v_i}/L_v$ is inert, the element $\omega_{v}$ must lie in
$N(M_{v_i}^\times)$.  Let $\beta_1 \in M_{v_1}$ be chosen such that
$$ N_{M_{v_1}/L_v}(\beta_1) = \omega_{v_1}. $$
We then have
$$ N_{M_{\sigma^i{v_1}}/L_v}(\beta_1^{\sigma^i}) = \omega_{\sigma^i{v_1}}. $$
Set $\beta = (\beta_1, 1, \ldots, 1) \in \prod M_{v_i}^\times$.  Define
$$ S = \beta^{-1}T \in D'. $$
We compute
\begin{align*}
S^{p-1} & = \beta^{-1}T\beta^{-1}T \cdots \beta^{-1}T \\
& = \beta^{-1}\beta^{-\sigma} \cdots \beta^{-\sigma^{p-2}}T^{p-1} \\
& = \omega_v^{-1} \omega_v \\
& = 1.
\end{align*}
The effect of the involution $\dag'$ on $S$ is given by
\begin{align*}
S^{\dag'} & = (\beta^{-1}T)^{\dag'} \\
& = \omega_v^{-1}T^{p-2}\bar{\beta}^{-1} \\
& = S^{p-2} \beta^{-1} \bar{\beta}^{-1}.
\end{align*}
By Lemma~\ref{lem:split}, to finish the proof, we must show that the
element $\beta_1 \in M_{v_1}$ 
may be chosen such that $\bar{\beta}_1 = \beta_1^{-1}$.
Let $t$ and $\ell$ be such that $L_v = \QQ_{\ell^t}$.  The extension
$M_{v_1}$ must be isomorphic to $\QQ_{\ell^{dt}}$.  Since $\omega_v$ is a
root of unity, it must be contained in
$$ \mu_{\ell^t-1} \subset L_v^\times. $$
Under the norm map
$$ N_{M_{v_1}/L_v}: M_{v_1}^\times \rightarrow L^\times_v $$
the subgroup $\mu_{\ell^{dt}-1} \subset M_{v_1}^\times$ surjects onto the
subgroup $\mu_{\ell^t-1} \subset L_v^\times$.  Therefore, we may take
$\beta_1$ to be a root of unity in $M_{v_1}$.  For such a choice of
$\beta_1$, we have $\bar{\beta}_1 = \beta_1^{-1}$, as desired.
\end{proof}

\begin{proof}[Proof of Lemma~\ref{lem:dagdag'}]
Since $n$ is even, the group $GU_{(D',\dag')}$ satisfies the Hasse
principle \cite[Sec.~7]{Kottwitz}, and therefore it suffices to show that
the involution $\dag'$ has the same local invariants as the involution
$\dag$.  Since both involutions are positive, we just need to show that
the local invariants agree at the finite places of $L^+$.

Expressing $\dag'$ as 
$$ x^{\dag'} = \xi^{-1} x^\dag \xi $$
for $x \in D'$, where $\xi = \xi^\dag$, define
$$ \Delta = N_{D/L}(\xi) \in (L^+)^\times/N(L^\times). $$
It suffices to show that
$$ \Delta_v \equiv 1 \in (L^+_v)^\times/N(L^\times_{v}) $$
for every finite place $v$ of $L'$.  
Note that $F$ has
the property that there is precisely one prime $\ell$ of $\QQ$ which
ramifies in $F$ ($\ell = 2$ if $p \in \{3,5 \}$ and $\ell = 3$ if $p = 7$).  
Since
in each of these cases, $\ell$ is a generator of $\ZZ_p^\times$, we deduce
that the extension $L^+/\QQ$ is inert at $\ell$, and therefore $(\ell)$ is
prime in $L^+$, and is the unique prime of $L^+$ which ramifies in $L$.  
Since $\dag'$ is positive,
$\Delta$ is positive at every archimedean place of $L^+$.  In light of the
fundamental exact sequence
$$ 0 \rightarrow (L^+)^\times/N(L^\times) \rightarrow \bigoplus_v
(L^+_v)^\times/N(L_v^\times) \rightarrow \ZZ/2 \rightarrow 0, $$
we see that it suffices for us to verify that $\delta_v \equiv 1$ for only the
places $v$ of $L^+$ which are inert in $L$.  Let $v$ be such a place, and
let $\ell'$ be the rational prime lying under $v$.
We have the following field diagram of unramified extensions
$$ 
\xymatrix{
& L_v \ar@{-}[dl]_{C_2} \ar@{-}[dr]^{C_d} \ar@{-}[dd]|{C_{2d}} \\
L^+_v \ar@{-}[dr]_{C_d} && F_{\ell'} \ar@{-}[dl]^{C_2} \\
& \QQ_{\ell'}
} $$
where $d = [L^+_v:\QQ_\ell]$ divides $p^{r-1}$, and is therefore odd.
Under the embedding $GU_{(D', \dag)} \hookrightarrow GU_{(D,\dag)}, $
the local discriminants give the following commutative diagram.
$$
\xymatrix{
H^1(L^+_v, GU_{(D',\dag)}) \ar[r] \ar[d]_{\disc_v}^\cong &
H^1(\QQ_{\ell'}, GU_{(D, \dag)}) \ar[d]^{\disc_{\ell'}}_\cong \\
(L_v^+)^\times/N(L_v^\times) \ar[r]_{N_{L^+_v/\QQ_{\ell'}}} &
\QQ_{\ell'}^\times/N(F_{\ell'}^\times)
}
$$
Since all of the extensions are unramified, 
the compatibility of the local Artin maps allows us to verify that the map
$$ (L_v^+)^\times/N(L_v^\times) \xrightarrow{N_{L^+_v/\QQ_{\ell'}}}
\QQ_{\ell'}^\times/N(F_{\ell'}^\times)
$$
is an isomorphism.  The level structure $\eta_0$ gives an isomorphism
$$ GU_{(D,\dag)}(\QQ_{\ell'}) \cong GU(\QQ_{\ell'}). $$
Since $\ell'$ is unramified in $F$, Lemma~\ref{lem:iotainvs} implies that we
have 
$$ \disc_{\ell'}(\iota) = -1 \equiv 1 \in \QQ_{\ell'}^\times/N(F_{\ell'}^\times). $$
We deduce that
$$ \disc_v(\dag) \equiv 1 \in (L^+_v)^\times/N(L_v^\times). $$
By Lemma~\ref{lem:dag'}, we have 
$$ \disc_v(\dag') \equiv \disc_v(\dag). $$
We conclude that $\Delta_v \equiv 1$, as desired.
\end{proof}

\begin{proof}[Proof of Theorem~\ref{thm:shaut}]
By Lemma~\ref{lem:dagdag'}, the involution $\dag$ is equivalent to the
involution $\dag'$.  Thus we get an isomorphism $GU_{(D', \dag')} \cong
GU_{(D', \dag)}$.  By Lemma~\ref{lem:unitary}, $G_r$ embeds in $GU_{(D',
\dag')}(\QQ)$.  We may therefore have an embedding
$$ G_r \hookrightarrow GU_{(D',\dag')}(\QQ) \cong GU_{(D', \dag)}(\QQ)
\hookrightarrow GU_{(D, \dag)}(\QQ) = GU_{\mbf{A}_0}(\QQ). $$
Since we have
$$ GU_{\mbf{A}_0}(\QQ_p) \cong D_u^\times, $$
the group of units in a central division algebra of invariant $1/n$ over
$\QQ_p$, the group $GU_{\mbf{A}_0}(\ZZ_p)$ is the unique maximal compact
subgroup.  Since $G_r$ is compact, it must be contained in
$GU_{\mbf{A}_0}(\ZZ_p)$, and we deduce
$$ G_r \subset GU_{\mbf{A}_0}(\ZZ_p) \cap GU_{\mbf{A}_0}(\QQ) =
GU_{\mbf{A}_0}(\ZZ_{(p)}). $$
Using the level structure $\eta_0$ to give an
isomorphism
$$ GU_{\mbf{A}_0}(\AF^{p,\infty}) \cong GU(\AF^{p,\infty}) $$
we may regard $G_r$ as a subgroup of $GU(\AF^{p,\infty})$.
For each prime $\ell \ne p$, choose a maximal compact subgroup
\begin{equation}\label{eq:Kell} 
K_\ell \subset GU(\QQ_\ell) 
\end{equation}
which contains the image of $G_r$.  
Let
$$ K = \prod_\ell K_\ell \subset GU(\AF^{p,\infty}) $$
be the associated compact open subgroup.  We have
$$ G_r \subseteq GU_{\mbf{A}}(\ZZ_{(p)}) \cap K = \Gamma(K). $$
Under the completion map
$$ GU_{\mbf{A}_0}(\ZZ_{(p)}) \hookrightarrow GU_{\mbf{A}_0}(\ZZ_p) \cong
\MS_n $$
the finite group $\Gamma(K)$ embeds into $\MS_n$.  Since $G_r$ is a maximal
finite subgroup of $\MS_n$, we conclude that
$$ G_r = \Gamma(K). $$
However, as discussed in Section~\ref{sec:taf}, the group $\Gamma(K)$ is
the automorphism group of the point
$$ (A_0, i_0, \lambda_0, [\eta_0]_K ) \in \Sh(K)^{[n]}(\bar{\FF}_p). $$
The theorem is therefore proven.
\end{proof}

\section{Concluding remarks}\label{sec:rmks}

In this final section we give a brief discussion of the relationship of the
results in this paper to some topics in homotopy theory, as well as sketch
some potential extensions.

\subsection{Relationship to Hopkins-Gorbounov-Mahowald theory}

Let $p \ge 3$.
In \cite{GorbounovMahowald}, the cohomology theory
$$ EO_{p-1} = E_2^{hG_1} $$
is related to liftings to characteristic zero of certain curves in
characteristic $p$.  

To summarize their work, let $C/\FF_p$ be the curve
given by
$$ C: y^{p-1} = x^{p} - x. $$
The curve $C$ has genus $(p-1)(p-2)/2$.  Therefore the Jacobian $J(C)$ is
an abelian variety of dimension $(p-1)(p-2)/2$, and is also acted upon by
$G_1$.  The action of the subgroup $\mu_{p-1} < G_1$ induces a splitting of
the formal completion $\widehat{J}(C)$ into $p-2$ summands, according to
the weights of the action:
$$ \widehat{J}(C) = \widehat{J}(C)[1] \oplus \widehat{J}(C)[2] \oplus
\cdots \oplus \widehat{J}(C)[p-2]. $$
The dimensions of the summand $\widehat{J}(C)$ is $i$.  Each summand has
height $p-1$.  In particular, $\widehat{J}(C)[1]$ is $1$ dimensional and of
height $p-1$, and there is an induced embedding
$$ G_1 \hookrightarrow \Aut(\widehat{J}(C)[1] \otimes_{\FF_p} \bar{\FF}_{p}) 
\cong \MS_{p-1} $$
as a maximal finite subgroup.

The second author, with Gorbounov and Mahowald, 
constructed a lift $\td{C}$ of the curve
$C$ over the ring
$$ E = \ZZ_p[[u_1, \ldots, u_{p-2} ]] $$
such that the action of $G_1$ lifts to an action on $\td{C}$.  Here, the
group $G_1$ acts non-trivially on the ring $E$, but the subgroup
$\mu_{p-1}$ acts trivially.  The authors prove that the
deformation $\widehat{J}(\td{C})[1]$ of $\widehat{J}(C)[1]$ is a universal
deformation. This gives a $G_1$-equivariant isomorphism from $E$ to the
Lubin-Tate universal deformation ring of $\widehat{J}(C)[1]$.  This gives
explicit formulas for the action of $G_1$ on $\pi_0(E_{p-1})$.

We explain how this set-up relates to the results of this paper.  Assume
that $p \ge 5$.
Let $F = \QQ(\omega)$ where $\omega$ is a primitive $(p-1)$st root of
unity.  The action of $\mu_{p-1}$ on $\td{C}$ gives the
$(p-1)(p-2)/2$-dimensional abelian variety $J(C)$ an action (through
quasi-endomorphisms) by the ring $\QQ[z]/(z^{p-1} - 1)$.  Factorize
$$ z^{p-1}-1 = f_1(z)\cdots f_d(z) $$
as a product of irreducible polynomials, so that $f_1$ is of degree
$\phi(p-1)$.  Then there is a product decomposition
$$ \QQ(z)/(z^{p-1}-1) \cong F_1 \times \cdots \times F_d $$
where $F_1 \cong F$.  The abelian variety $J(C)$ is then quasi-isogenous to
a product
$$ J(C) \simeq J(C)_1 \times \cdots \times J(C)_d $$
where the factor $J(C)_i$ has complex multiplication by the field $F_i$.
In particular, $J_1(C)$ has complex multiplication by $F$: we get
$$ i_C: F \rightarrow D_C := \End^0(J(C)_1). $$
Taking the
formal completion, the summands of $\widehat{J}(C)$ which show up in
$\widehat{J}(C)_1$ correspond to the weights $i$ for $i \in
(\ZZ/(p-1))^\times$.  We therefore have a decomposition
$$ \widehat{J}(C)_1 = \bigoplus_{i \in (\ZZ/(p-1))^\times}
\widehat{J}(C)[i]. $$

Let $(t_1, \ldots, t_k)$ be a sequence of integers with $k = \phi(p-1)/2$ 
such that $0 < t_j < p-1$, $t_1 = 1$, and the sequence
$$ (t_1, \ldots, t_k, (p-1)-t_1, \ldots, (p-1)-t_k) $$
gives a complete list of the elements of $(\ZZ/(p-1))^\times$ when reduced
mod $(p-1)$.
The prime $p$ splits completely in $F$, and we write
$$ (p) = y_1 \ldots y_k \bar{y}_1 \ldots \bar{y}_k $$
where,
regarding $(\ZZ/(p-1))^\times = \Gal(F/\QQ)$, we have
\begin{align*}
[t_j](y_1) & = y_j \\
[-t_j](y_1) & = \bar{y}_j.
\end{align*}
The decomposition
$$ F_p = \prod_j F_{y_j} \times F_{\bar{y}_j}
$$
gives a splitting
$$ \widehat{J}(C)_1 = \bigoplus_j [\widehat{J}(C)_1]_{y_j} \oplus
[\widehat{J}(C)_1]_{\bar{y}_j}, $$
and we fix our labeling of the prime $y_1$ so that
$$ [\widehat{J}(C)_1]_{y_1} = \widehat{J}(C)[1]. $$
We then have
\begin{align*}
[\widehat{J}(C)_1]_{y_j} & = \widehat{J}(C)[t_j], \\
[\widehat{J}(C)_1]_{\bar{y}_j} & = \widehat{J}(C)[(p-1)-t_j]. 
\end{align*}

By the Honda-Tate classification (see, for instance, \cite[Ch.2]{taf}), we
deduce from the slopes of $\widehat{J}(C)$ that $J(C)_1$ is simple, and
hence 
$$ D_C = \End^0(J(C)_1) $$
is a central division algebra over $F$ whose only non-trivial invariants
are given by 
\begin{align*}
\Inv_{y_j} D_C & = t_j/(p-1), \\
\Inv_{\bar{y}_j} D_C& = -t_j/(p-1).
\end{align*}
Thus we see that $D_C$ is isomorphic to the division algebra $D$ 
constructed in
Section~\ref{sec:global}.  

Moreover, in the notation of Section~\ref{sec:involutions}, the subalgebra
$D'$ is equal to $D$.  The involution $\dag'$ is \emph{characterized} by the
property that $G_1$ is contained in $U_{(D,\dag')}(\QQ)$.  The abelian
variety 
$J(C)$ possesses a canonical polarization $\lambda_C$ coming from the Jacobian
structure.  Since the action of $G_1$ on $J(C)$ is induced from an action
on $C$, the action of $G_1$ preserves the polarization.  In particular, the
polarization restricts to a polarization 
$$ \lambda_C : J(C)_1 \rightarrow J(C)_1^\vee. $$
Letting $\dag_C$ be
the associated Rosati involution on $D_C$, we conclude that $G_1$ is
contained in $U_{(D_C, \dag_C)}(\QQ)$.  We therefore deduce that there is
an isomorphism
\begin{equation}\label{eq:dagC}
(D,\dag) \cong (D_C, \dag_C). 
\end{equation}

Specializing to the case of $p \in \{ 5,7 \}$, the field $F$ is a quadratic
imaginary extension of $\QQ$.  We have $k = 1$, $(p) = y_1 \bar{y}_1$, and 
$$ \widehat{J}(C)_1 = \widehat{J}(C)[1] \oplus \widehat{J}(C)[p-2]. $$
Fixing the Shimura data as in Section~\ref{sec:shaut}, it follows from
Theorem~\ref{thm:shaut} and (\ref{eq:dagC}) that there exists a
compact open subgroup $K \subset GU(\AF^{p,\infty})$ and a level structure
$\eta_0$ so that $C$ gives rise to a point
$$ (J(C)_1, i_C, \lambda_C, [\eta_0]_K) \in \Sh^{[n]}(\FF_p) $$
in height $n$ locus of the Shimura stack $\Sh(K)$.

\subsection{More Shimura stacks}\label{sec:moreShimura}

In Section~\ref{sec:negative}, we showed that if $p$ is odd, $n =
(p-1)mp^{\alpha-1}$ and $G_\alpha$
is an automorphism group of a height $n$ point of a Shimura stack $\Sh(K)$,
then we must have $m = 1$ and $p \le 7$.  However this analysis was
restricted to the class of Shimura stacks considered in \cite{taf} that
give rise to cohomology theories $\TAF$.  If one removes the restriction
that the Shimura stack has an associated cohomology theory, one can
extend the analysis of Section~\ref{sec:shaut} to show that there do exist
Shimura stacks with a mod $p$ point whose associated quasi-endomorphism
ring with involution does correspond to the pair $(D,\dag)$ of
Proposition~\ref{prop:dag}.  

Specifically, using the notation of Section~\ref{sec:global}, for arbitrary
$p$, $m$, and $\alpha$, let $B =
M_n(F)$.  Then there exists an involution $\iota$ on $B$ whose signatures
at the real places of $F^+$ are given by $(t_i, n-t_i)$, and a compact open
subgroup $K \subset GU_{(B,\iota)}(\AF^{p,\infty})$ such that the
associated Shimura stack has $G_\alpha$ as the automorphism group of a mod
$p$ point of the associated Shimura stack $\Sh_{(B,\iota)}(K)$ over
$\mc{O}_{F^+, x_1}$.

The reduction $\Sh_{(B,\iota)}(K) \otimes \bar{k}_{x_1}$ (where $k_{x_1}$
is the residue field of $F^+$ at $x_1$) possesses a stratification governed
by the Newton polygons of the associated $p$-divisible groups.  It is
possible that if one chooses a suitable stratum, one could associate to it
a cohomology theory via Lurie's theorem.  
If this is the case, setting $\alpha = 1$, one could use the
deformation theory of points in this stratum to give deformations of the
Jacobians of the Artin-Schreier curves studied by Ravenel in
\cite{RavenelAS}.

\subsection{Potential Applications to $EO_n$-resolutions}

In \cite{GHMR}, a resolution of the $K(2)$-local sphere at the prime $3$ is
constructed
$$ \ast \rightarrow S_{K(2)} \rightarrow X_0 \rightarrow \cdots \rightarrow
X_4 \rightarrow \ast $$
where the spectra $X_i$ are wedges of spectra of the form $E_2^{hG}$ for
various finite subgroups of the extended Morava stabilizer group.  In
\cite[Thm.~26]{Henn}, Henn extends this to show that for $p$ odd and $n = p-1$, there
is a similar resolution of $S_{K(p-1)}$ involving the spectra $EO_{p-1}$.

In \cite{Behrens}, the first author gave a moduli theoretic description of
\emph{half} of
the resolution of \cite{GHMR}.  In \cite{Behrensbldg}, the resolution of
\cite{Behrens} is shown to $K(2)$-locally arise strictly from the structure
of the quaternion algebra of quasi-endomorphisms of a supersingular
elliptic curve.  These constructions are further generalized in \cite{taf}.  
Even in the cases where there is no Shimura stack giving rise to the groups
$G_\alpha$, the results of Section~\ref{sec:global} and
Section~\ref{sec:involutions} still produce a very explicit presentation of a global
division algebra with involution $(D,\dag)$ 
containing the groups $G_\alpha$.  The buildings associated to the groups
$$ GU_{(D,\dag)}(F^+_\lambda) \cong GL_n(F^+_\lambda) $$ 
for primes $\lambda$ of $F^+$ not dividing
$p$ which split in $F$ will therefore produce length $n$ resolutions involving
$E_n^{hG_\alpha}$.  Analogs of the density results of
\cite{BehrensLawsondense},
\cite{Naumann} should help analyze to what degree these resolutions
approximate $S_{K(p-1)}$.

\subsection{Connective covers of $EO_n$}

One of the benefits to the equivalence
$$ \tmf_{K(2)} \simeq EO_2^{h\Gal(\bar{\FF}_p/\FF_p)} $$
is that the connective spectrum $\tmf_p$ serves as a well-behaved
connective cover for $EO_2$ for $p \in \{2,3\}$.  The connective spectrum
$\tmf$ in turn comes from the compactification of the moduli stack of
elliptic curves.  

One of the original motivations of the authors to investigate the results
of this paper was to try to give similar connective covers for $EO_n$.
The Shimura stacks associated to the group $GU$ of Section~\ref{sec:shaut}
possess similar compactifications.  In fact, these compactifications involve
adding only finitely many points in the locus where the associated formal
group has height $1$.  Therefore, like the case of $\TMF$, the construction
of connective forms of the associated $\TAF$ spectra is basically a
$K(1)$-local problem.  The main 
difficulty lies in the fact that in the equivalence
$$ \TAF_{GU}(K)_{K(n)} \simeq 
\left(
\prod_{[g] \in GU_{\mbf{A}_0}(\ZZ_{(p)})
\backslash GU(\AF^{p,\infty})/K}
E_n^{h\Gamma(gKg^{-1})}
\right)^{h\Gal(\bar{\FF}_p/\FF_p)}.
$$
the number of terms in the product could be greater than $1$.  
The computation of the cardinality of 
$$ GU_{\mbf{A}_0}(\ZZ_{(p)})
\backslash GU(\AF^{p,\infty})/K $$
is a class number question for the group $GU$.  Unfortunately, preliminary
calculations of this cardinality by the first author, using the mass
formulas of \cite{GHY}, seem to indicate
that, even in the case of $p = 5$ and $n = 4$, this class number is quite
large.

\subsection{Non-orientability of $TAF$-spectra}

Atiyah, Bott, and Shapiro \cite{ABS} 
showed that $\mit{Spin}$-bundles are $KO$-orientable,
thus giving an orientation
$$ \widehat{A}: M\mit{Spin} \rightarrow KO. $$
refining the $\widehat{A}$-genus.  The second author, with Ando and Rezk
\cite{AHR}, showed that the Witten genus refines to an orientation
$$ M\mit{String} \rightarrow \tmf. $$
One can ask the following question: is there a natural
topological group $G$ over $O$ for which there exists an orientation
$$ MG \rightarrow \TAF_{GU}(K)? $$
Na\"ively, since 
\begin{align*}
B\mit{Spin} & = BO\bra{4}, \\
B\mit{String} & = BO\bra{8}, 
\end{align*}
one might expect that one can take $G$ so that $BG = BO\bra{N}$ for $N$
sufficiently large.  However, at least if $p \in \{5,7\}$, $n = p-1$, and
$GU$ and $K$ are as in Theorem~\ref{thm:shaut},
the existence of a hypothetical $MO\bra{N}$-orientation of $\TAF_{GU}(K)$
would result in a composite of maps of ring spectra
$$ MO\bra{N} \rightarrow \TAF_{GU}(K) \rightarrow EO_{p-1}. $$
Hovey showed that no such composite can exist, for any $N$ and $p \ge 5$
\cite[Prop~2.3.2]{Hovey}.  Thus, the results of this paper imply that, at
least in some cases, the spectrum $\TAF_{GU}(K)$ is \emph{not} orientable by a
any connective cover of $O$.

\begin{rmk}
Hovey's non-orientability result relies only on the spectrum detecting the
$p$-primary $\alpha_1$ for $p > 3$ in its Hurewitz image.  Thus, the
comments in the following section actually imply the non-orientability of
a much larger class of $\TAF$-spectra by connective covers of $O$.
\end{rmk}

\subsection{$p$-torsion in automorphism groups at other primes}

Fix $p$ odd and set $n = (p-1)p^{\alpha-1}m$.
The results of Section~\ref{sec:negative} imply that in most cases, the
maximal finite group $G_\alpha$ cannot be realized as an automorphism group
of a mod $p$ point of height $n$ in one of the Shimura stacks under
consideration.  This does not preclude the possibility that these
automorphism groups could contain large $p$-torsion.  The main interest in
the groups $G_\alpha$ in homotopy theory 
is not that they are maximal, but rather that they contain an element of
order $p^\alpha$.

Indeed, fix $p$ to be any prime, and set $n = (p-1)p^{r-1}$.  Let $F$ be any
quadratic imaginary extension of $\QQ$ in which $p$ splits as $u\bar{u}$, and 
let $M =
F(\zeta)$, where $\zeta$ is a primitive $p^r$th root of unity.  We
necessarily have $[M:F] = n$.  Let $A_0/\bar{\FF}_p$ 
be an abelian variety of dimension
$n$ with complex
multiplication
$$ i_0 : F \rightarrow \End^0(A) := D $$
with associated $p$-divisible summand $A(u)$ of slope $1/n$ (such an
abelian variety exists by Honda-Tate theory).  Embed the
extension $M$ of $F$ in
$D$.  This gives $A_0$ complex multiplication by $M$.
Pick a
polarization $\lambda_0$ compatible with this $M$-linear structure: 
one exists by
\cite[Lem.~9.2]{Kottwitz}.  By the definition of compatibility, the
$\lambda_0$-Rosati involution restricts to the CM involution on $M$.  In
particular there is an inclusion
$$ \mu_{p^r} \hookrightarrow \Aut(A_0, i_0, \lambda_0). $$
Let $(\td{A}_0, \td{i}_0,
\td{\lambda}_0)/\CC$ 
be the base-change to the
complex numbers of a lift of $(A_0, i_0, \lambda_0)$ to characteristic $0$.
Following the argument of \cite[14.3.2]{taf}, the tuple 
$(\td{A}_0, \td{i}_0, \td{\lambda}_0)$ arises as a point of a complex Shimura
stack $\Sh_\CC$ associated to a group $GU_{(C,\iota)}$ where $C = M_n(F)$ and
$\iota$ is an involution associated to a hermitian form of signature
$(1,n-1)$.  Let $\Sh$ be the $p$-integral model of $\Sh_\CC$: by
construction the point $(A_0, i_0, \lambda_0)$ is a mod $p$ point, with an
automorphism group containing an element of order $p^r$.  We therefore have
established the following proposition.

\begin{prop}\label{prop:highorder}
Suppose that $p$ is any prime, and that $n = (p-1)p^{r-1}$.  Then for each
quadratic imaginary extension $F$ of $\QQ$ in which $p$ splits, there
exists an $n$-dimensional hermitian form of signature $(1,n-1)$, 
with associated unitary similitude group $GU$, and a compact open subgroup $K
\subset GU(\AF^{p,\infty})$, so that the associated Shimura stack contains
a point
$$ (A_0, i_0, \lambda_0, [\eta_0]_K) \in \Sh^{[n]}(K)(\bar{\FF}_p) $$
whose automorphism group contains an element of order $p^r$.
\end{prop}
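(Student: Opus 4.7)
The proof essentially proceeds by constructing the abelian variety first and then recognizing it as a point of an appropriate Shimura stack. First, I would fix a quadratic imaginary extension $F/\QQ$ in which $p$ splits as $u\bar{u}$, and set $M = F(\zeta_{p^r})$; since $[F(\zeta_{p^r}):F] = (p-1)p^{r-1} = n$, the field $M$ has the correct degree to embed as a maximal subfield of a central simple $F$-algebra of dimension $n^2$. By Honda-Tate theory, there exists an abelian variety $A_0/\bar{\FF}_p$ of dimension $n$ equipped with a CM structure $i_0 : F \hookrightarrow D := \End^0(A_0)$ such that the $u$-component $A_0(u)$ of the $p$-divisible group has slope $1/n$; equivalently, $\Inv_u D = 1/n$ and $\Inv_{\bar u} D = (n-1)/n$. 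Embed $M$ into $D$ as a maximal subfield; this upgrades the CM structure to an $M$-action on $A_0$.

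Next, I would invoke \cite[Lem.~9.2]{Kottwitz} (as used in the proof of Proposition~\ref{prop:dag}) to produce a prime-to-$p$ polarization $\lambda_0$ on $A_0$ whose associated Rosati involution on $D$ restricts to the complex-conjugation involution on the CM field $M$. Since the group of roots of unity $\mu_{p^r} \subset M^\times$ is fixed elementwise up to inversion by conjugation, one has $\zeta_{p^r} \cdot \br{\zeta_{p^r}} = 1$, so $\mu_{p^r}$ lies in the unitary group associated with $(D,\dag)$, giving an inclusion
\[
\mu_{p^r} \hookrightarrow \Aut(A_0, i_0, \lambda_0).
\]
In particular, the automorphism group contains an element of order $p^r$, as desired.

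The final step is to realize $(A_0, i_0, \lambda_0)$ as the reduction of a mod $p$ point of a Shimura stack of the correct type. I would first lift $(A_0, i_0, \lambda_0)$ to a tuple $(\td{A}_0, \td{i}_0, \td{\lambda}_0)$ over some complete local ring, and base-change to $\CC$. Then, following the argument of \cite[Sec.~14.3.2]{taf}, this complex tuple arises as a $\CC$-point of the Shimura stack $\Sh_\CC$ associated to a PEL datum $(C,\iota,V,\bra{-,-})$ with $C = M_n(F)$ (after Morita) and $\iota$ an involution whose signature at the complex embedding of $F^+$ is $(1,n-1)$ --- the signature is forced by the requirement of Condition~(\ref{eq:condition}) that $\Lie A \otimes_{\mc{O}_{F,p}} \mc{O}_{F,u}$ be free of rank $n$, combined with the splitting of the $p$-divisible group into its $u$ and $\bar u$ components. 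Finally, take the $p$-integral model $\Sh$ of $\Sh_\CC$ over $\ZZ_p$, and let $K \subset GU(\AF^{p,\infty})$ be any compact open subgroup containing the image of a chosen level structure $\eta_0$. By construction, $(A_0,i_0,\lambda_0,[\eta_0]_K)$ is a mod $p$ point of $\Sh(K)$, and since $A_0(u)$ has slope $1/n$, the associated formal group has height exactly $n$, so this point lies in $\Sh^{[n]}(K)(\bar{\FF}_p)$.

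The main obstacle, as in the proof of Theorem~\ref{thm:shaut}, is ensuring that the involution produced by Kottwitz's lemma corresponds to a hermitian form of the specific signature $(1,n-1)$ needed for the Shimura stack to fall into the class considered in \cite{taf}. Here this is substantially easier than in Section~\ref{sec:shaut} because we are not trying to match to a pre-specified global involution $\iota$ on $M_n(F)$; instead, we are free to construct the Shimura datum from the abelian variety, and the signature condition can be arranged by choosing the lift appropriately and comparing Lie algebra decompositions via the canonical identification at a complex embedding. The argument requires no delicate local/global Hasse-principle manipulation because we are only asking for the existence of \emph{some} Shimura stack realizing the given $p^r$-torsion automorphism, rather than producing the full maximal finite subgroup $G_r$.
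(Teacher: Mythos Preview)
Your proposal is correct and follows essentially the same route as the paper: construct $A_0$ via Honda--Tate with the prescribed slope at $u$, embed $M = F(\zeta_{p^r})$ into $D$, invoke \cite[Lem.~9.2]{Kottwitz} for a compatible polarization, lift to $\CC$, and then cite \cite[14.3.2]{taf} to recognize the complex tuple as a point of a Shimura stack of type $(1,n-1)$ before descending to the $p$-integral model. One minor remark: your explanation that the signature $(1,n-1)$ is ``forced by Condition~(\ref{eq:condition})'' conflates the $p$-adic Lie algebra condition with the archimedean signature; the signature is read off from the Hodge decomposition of the complex lift, and the paper (like you) simply defers this to the argument of \cite[14.3.2]{taf} rather than spelling it out.
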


\subsection{The prime $2$}

Suppose now that $p = 2$, and that $n = 2^{r-1}$ with $r > 2$.  By
\cite[Cor.~1.5]{Hewett}, every maximal finite subgroup of $\MS_n$ is
cyclic.  Let $\mu_{2^r} \subset \MS_n$ be the cyclic subgroup of order
$2^r$ arising from embedding the field $\QQ_2(\zeta_{2^r})$ in the division
algebra $D_{1/n}$ over $\QQ_2$ of invariant $1/n$.  Let $G$ be a maximal finite
subgroup in $\MS_n$ containing $\mu_{2^r}$.  Since $G$ is cyclic, there
must be a corresponding cyclotomic extension of $\QQ_2$ containing 
$\QQ_2(\mu_{2^r})$
which embeds in $D_{1/n}$.  Since $\QQ_2(\zeta_{2^r})$ is a maximal
subfield, we conclude that $G = \mu_{2^r}$, and that $\mu_{2^r}$ is a
maximal finite subgroup.  Using Proposition~\ref{prop:highorder}, we 
therefore have the following $2$-primary version of Theorem~\ref{thm:shaut}.

\begin{prop}
Suppose that $p = 2$ and that $n = 2^{r-1}$ with $r > 2$.  Then for each
quadratic imaginary extension $F$ of $\QQ$ in which $2$ splits, there
exists an $n$-dimensional hermitian form of signature $(1,n-1)$, 
with associated unitary similitude group $GU$, and a compact open subgroup $K
\subset GU(\AF^{2,\infty})$, so that the associated Shimura stack contains
a point
$$ (A_0, i_0, \lambda_0, [\eta_0]_K) \in \Sh^{[n]}(K)(\bar{\FF}_2) $$
whose automorphism group is the maximal subgroup $\mu_{2^{r}} \subset
\MS_n$.
\end{prop}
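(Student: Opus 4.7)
The plan is to deduce this result as an immediate consequence of Proposition~\ref{prop:highorder} together with the cyclicity discussion in the paragraph just preceding the proposition. Specifically, Proposition~\ref{prop:highorder} applied at $p = 2$ and $n = 2^{r-1}$ produces, for any quadratic imaginary $F$ in which $2$ splits, a hermitian form of signature $(1,n-1)$, a compact open subgroup $K \subset GU(\AF^{2,\infty})$, and a point
$$ \mbf{A}_0 = (A_0, i_0, \lambda_0, [\eta_0]_K) \in \Sh^{[n]}(K)(\bar{\FF}_2) $$
whose automorphism group contains an element $\zeta$ of order $2^r$.

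The next step is to identify this automorphism group. By the lemma in Section~\ref{sec:taf}, $\Aut(\mbf{A}_0) = \Gamma(K)$, and, as noted there, $\Gamma(K)$ is a \emph{finite} subgroup of the Morava stabilizer group $\MS_n$. Under the embedding $\Gamma(K) \hookrightarrow \MS_n$, the element $\zeta$ maps to an element of order $2^r$, and so $\Gamma(K)$ contains a copy of $\mu_{2^r}$ arising from the maximal subfield $\QQ_2(\zeta_{2^r}) \subset D_{1/n}$.

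Finally, we invoke the discussion directly preceding the proposition: at $p = 2$ and $n = 2^{r-1}$ with $r>2$, Hewett's classification forces every maximal finite subgroup of $\MS_n$ to be cyclic, and the argument given there shows that $\mu_{2^r}$ is itself a maximal finite subgroup of $\MS_n$. Any finite subgroup of $\MS_n$ containing $\mu_{2^r}$ therefore coincides with $\mu_{2^r}$. Applying this to $\Gamma(K)$, we conclude $\Aut(\mbf{A}_0) = \Gamma(K) = \mu_{2^r}$, as required.

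There is no real obstacle here: the proposition is essentially a packaging of Proposition~\ref{prop:highorder} with the cyclicity of maximal finite subgroups at $p=2$. The only thing to keep in mind is that the hermitian form and level structure produced are those coming from the complex Shimura datum in the proof of Proposition~\ref{prop:highorder}, together with a compact open $K$ large enough to contain $\mu_{2^r}$ via a chosen level structure $\eta_0$; once these are fixed, the maximality statement does all the remaining work.
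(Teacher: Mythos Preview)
Your proposal is correct and follows exactly the paper's approach: the proposition is stated as an immediate corollary of Proposition~\ref{prop:highorder} combined with the preceding paragraph's argument that $\mu_{2^r}$ is a maximal finite subgroup of $\MS_n$ when $p=2$ and $n=2^{r-1}$. The paper itself gives no further proof beyond the phrase ``Using Proposition~\ref{prop:highorder}, we therefore have the following $2$-primary version of Theorem~\ref{thm:shaut}.''
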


\providecommand{\bysame}{\leavevmode\hbox to3em{\hrulefill}\thinspace}
\providecommand{\MR}{\relax\ifhmode\unskip\space\fi MR }
\providecommand{\MRhref}[2]{%
  \href{http://www.ams.org/mathscinet-getitem?mr=#1}{#2}
}
\providecommand{\href}[2]{#2}

\end{document}